\newtheorem{theorem}{Theorem}[section]
\newtheorem{proposition}[theorem]{Proposition}
\newtheorem{lemma}[theorem]{Lemma}
\newtheorem{corollary}[theorem]{Corollary}
\theoremstyle{definition}
\newtheorem{remark}[theorem]{Remark}
\newtheorem{example}[theorem]{Example}
\newcommand{\PP}{\mathbb{P}}
\newcommand{\RR}{\mathbb{R}}
\newcommand{\ZZ}{\mathbb{Z}}
\newcommand{\NN}{\mathbb{N}}
\newcommand{\True}{\textbf{true}}
\newcommand{\False}{\textbf{false}}
\DeclareMathOperator{\Newt}{Newt} 
\DeclareMathOperator{\relint}{relint}
\DeclareMathOperator{\conv}{conv}
\DeclareMathOperator{\interior}{int}
\title{\bf Copositive geometry of Feynman integrals}
\author{Bernd Sturmfels and M\'at\'e L. Telek \bigskip \\
\begin{footnotesize} {\em Dedicated to the memory of Victoria Powers} \end{footnotesize}}
\date{}
\begin{document}

\maketitle

\begin{abstract}
\noindent
Copositive matrices and copositive polynomials 
are objects from optimization. We  connect these
 to the geometry of Feynman integrals in physics.
The integral is guaranteed to converge if its
kinematic parameters lie in the copositive cone. 
P\'olya's method makes this manifest.
We
study the copositive cone for the
second Symanzik polynomial of any Feynman~graph.
Its algebraic boundary is described by Landau discriminants.

\end{abstract}
 
 \section{Introduction}

When evaluating the integral of a rational function, 
convergence is related to positivity properties of its denominator polynomial.
For an example, consider the univariate integral
\begin{equation}
\label{eq:univariateintegral}
\int_{0}^\infty \frac{x (x+1)^2}{(\,x^2 - (2-\epsilon) x + 1\,)^3 } dx.
\end{equation}
This integral converges if and only if 
$\epsilon$ is positive, so the denominator is positive for $x \geq 0$.
Integrals in geometry are often written in homogeneous coordinates.
This equates (\ref{eq:univariateintegral}) with
\begin{equation}
\label{eq:univariateintegral2}\qquad
\int_{\PP^1_{> 0}} \frac{x_1^2 x_2^2 (x_1+x_2)^2}{(\,x_1^2 - (2-\epsilon) x_1 x_2 + x_2^2\,)^3} \, \Omega
\qquad {\rm where} \,\, \,\,\Omega \,= \, \frac{d x_1}{x_1} - \frac{d x_2}{x_2}.
\end{equation}

The title of this paper is a nod to physics.
 Feynman integrals \cite{Weinzierl} are building blocks for
scattering amplitudes \cite{Borinsky, HennRaman, MizeraTelen}. They take the form shown in
(\ref{Eq:FeynmParamInt}) below.
The integral $I_G(z)$ is derived from a 
Feynman graph $G$ with $n$ edges and $\ell$ loops.
The integrand contains the two Symanzik polynomials
$\,\mathcal{U} $ and $\mathcal{F}_z $.
These are polynomials in $x = (x_1,\ldots,x_n)$,
of degrees $\ell$ and $\ell+1$ respectively, 
and $\mathcal{F}_z$ depends linearly on a vector
$z$ of  kinematic parameters.

Copositive geometry is a wordplay
which connects positive geometry \cite{RST} to
copositivity in optimization \cite{Dur, Nie:Book, Vargas}.
A homogeneous polynomial is {\em copositive}
if it is nonnegative on the positive orthant.
A symmetric matrix is copositive if
the associated quadratic form is copositive \cite{Motzkin}.
Our object of study is the {\em copositive cone}
$\mathcal{C}_G$ of a Feynman graph $G$.
This is the set of parameter vectors $z$
for which the second Symanzik polynomial $\mathcal{F}_z$ is copositive.

This project started from our attempt to understand the
{\em Euclidean region} of a graph~$G$.
This refers to a region in $z$-space where the 
Feynman integral $I_G(z)$ converges.
Details vary across the physics literature. We opted for the
definition of Henn and Raman in \cite{HennRaman},
which says that the Euclidean region is precisely the copositive cone $\mathcal{C}_G$.
In  \cite[Section 3.1]{HennRaman}, they write that
``the general question of determining the Euclidean region is
an interesting open question''.
The purpose of this article is to suggest mathematical answers to that question.

We study the copositive cone $\mathcal{C}_G$ in the general
setting when all particles are massive.
This ensures that every variable $x_i$ occurs to the
second power in $\mathcal{F}_z(x)$. It is  that
feature which makes our problem interesting.
If one restricts to massless particles, then 
$\mathcal{C}_G$ becomes a polyhedral cone.
That case is not interesting for us.
In this paper,  the copositive cone $\mathcal{C}_G$ is always nonlinear.
Its boundary is given algebraically by {\em Landau discriminants}~\cite{FMT,MizeraTelen}.
Most of our results remain true in the mixed case, where some particles are massive and others are massless, which is a relevant scenario in physics. However, the proof of our main theorem (Theorem~\ref{Thm:PolyaSymanzik}) requires that all particles be massive.

We present a small example which illustrates the
various ingredients for  our discussion.

\begin{example}[Bubble diagram] \label{ex:bubble}
Consider the following Feynman graph $G$ with $\ell = 1$ loop:
\begin{equation}
\label{Eq:Bubble}
\begin{aligned}
\begin{tikzpicture}[scale=0.3]
    \node[] (P4) at (4.6, 2) {\small $p_4$};
     \node[] (P3) at (4.6, -2) {\small $p_3$};
    \node[] (P1) at (-4.6, 2) {\small $p_1$};
     \node[] (P2) at (-4.6, -2) {\small $p_2$};
    \node[fill=black, circle, inner sep=1.5pt]
    (A) at (2.5, 0) {};
     \node[fill=black, circle, inner sep=1.5pt] (B) at (-2.5, 0) {};
     \node[fill=gray, circle, inner sep=1.5pt] (A1) at (5.5, 1.8) {};
      \node[fill=gray, circle, inner sep=1.5pt]  (A2) at (5.5, -1.8) {};
    \node[fill=gray, circle, inner sep=1.5pt] (B1) at (-5.5, 1.8) {};
      \node[fill=gray, circle, inner sep=1.5pt] (B2) at (-5.5, -1.8) {};
     \draw[bend left=70] (A) to (B) node[midway, above=-45 pt] {\Huge $m_2$};
     \draw[bend right=70] (A) to (B) node[midway, above=45 pt] {\Huge $m_1$};
    \draw (A) -- (A1);
    \draw (A) -- (A2);
       \draw (B) -- (B1);
    \draw (B) -- (B2);
\end{tikzpicture}
\end{aligned}
\end{equation}
This describes a scattering process with four external particles
and $n=2$ internal particles, with masses
$m_1$ and $m_2$. The graph has two spanning trees
and one spanning $2$-forest:
     \begin{center}
\begin{minipage}{0.3\textwidth}
\centering
\begin{tikzpicture}[scale=0.3]
    \node[] (P4) at (4.6, 2) {\small $p_4$};
     \node[] (P3) at (4.6, -2) {\small $p_3$};
    \node[] (P1) at (-4.6, 2) {\small $p_1$};
     \node[] (P2) at (-4.6, -2) {\small $p_2$};
    \node[fill=black, circle, inner sep=1.5pt]
    (A) at (2.5, 0) {};
     \node[fill=black, circle, inner sep=1.5pt] (B) at (-2.5, 0) {};
     \node[fill=gray, circle, inner sep=1.5pt] (A1) at (5.5, 1.8) {};
      \node[fill=gray, circle, inner sep=1.5pt]  (A2) at (5.5, -1.8) {};
    \node[fill=gray, circle, inner sep=1.5pt] (B1) at (-5.5, 1.8) {};
      \node[fill=gray, circle, inner sep=1.5pt] (B2) at (-5.5, -1.8) {};
     \draw[bend left=70] (A) to (B) node[midway, above=-45 pt] {\Huge $m_2$};
    \draw (A) -- (A1);
    \draw (A) -- (A2);
       \draw (B) -- (B1);
    \draw (B) -- (B2);
\end{tikzpicture}
\end{minipage}
\hspace{12pt}
\begin{minipage}{0.3\textwidth}
\centering
\begin{tikzpicture}[scale=0.3]
    \node[] (P4) at (4.6, 2) {\small $p_4$};
     \node[] (P3) at (4.6, -2) {\small $p_3$};
    \node[] (P1) at (-4.6, 2) {\small $p_1$};
     \node[] (P2) at (-4.6, -2) {\small $p_2$};
    \node[fill=black, circle, inner sep=1.5pt]
    (A) at (2.5, 0) {};
     \node[fill=black, circle, inner sep=1.5pt] (B) at (-2.5, 0) {};
     \node[fill=gray, circle, inner sep=1.5pt] (A1) at (5.5, 1.8) {};
      \node[fill=gray, circle, inner sep=1.5pt]  (A2) at (5.5, -1.8) {};
    \node[fill=gray, circle, inner sep=1.5pt] (B1) at (-5.5, 1.8) {};
      \node[fill=gray, circle, inner sep=1.5pt] (B2) at (-5.5, -1.8) {};
     \draw[bend right=70] (A) to (B) node[midway, above=45 pt] {\Huge $m_1$};
    \draw (A) -- (A1);
    \draw (A) -- (A2);
       \draw (B) -- (B1);
    \draw (B) -- (B2);
\end{tikzpicture}
\end{minipage}
\hspace{12pt}
\begin{minipage}{0.3\textwidth}
\centering
\begin{tikzpicture}[scale=0.3]
    \node[] (P4) at (4.6, 2) {\small $p_4$};
     \node[] (P3) at (4.6, -2) {\small $p_3$};
    \node[] (P1) at (-4.6, 2) {\small $p_1$};
     \node[] (P2) at (-4.6, -2) {\small $p_2$};
    \node[fill=black, circle, inner sep=1.5pt]
    (A) at (2.5, 0) {};
     \node[fill=black, circle, inner sep=1.5pt] (B) at (-2.5, 0) {};
     \node[fill=gray, circle, inner sep=1.5pt] (A1) at (5.5, 1.8) {};
      \node[fill=gray, circle, inner sep=1.5pt]  (A2) at (5.5, -1.8) {};
    \node[fill=gray, circle, inner sep=1.5pt] (B1) at (-5.5, 1.8) {};
      \node[fill=gray, circle, inner sep=1.5pt] (B2) at (-5.5, -1.8) {};
    \draw (A) -- (A1);
    \draw (A) -- (A2);
       \draw (B) -- (B1);
    \draw (B) -- (B2);
\end{tikzpicture}
\end{minipage}
     \end{center}
     The spanning trees yield the polynomial
        $\mathcal{U}(x) = x_1 + x_2$ which has degree $\ell=1$.
        The second Symanzik polynomial has degree $\ell+1=2$, and it
       depends on three kinematic parameters:
        \begin{equation}
    \label{Eq:FRunning}
        \mathcal{F}_z(x) \,\,=\,\,    \mathcal{U}(x) \cdot
         (m_1 x_1 + m_2 x_2)- s \cdot x_1 x_2 
        \,\,=\,\,  m_1x_1^2 \,+\,  (m_1 \!+\! m_2 \!- \!s)\cdot x_1 x_2 
        \,+\, m_2 x_2^2.
    \end{equation}
    A parameter vector
        $z=(m_1,m_2,s) \in \RR^3$ satisfies
         $\mathcal{F}_z(x) \geq 0$ for all $x \in \RR^2_{\geq 0}$
      if and only if 
   \begin{equation}
   \label{eq:disjunction}
\quad   m_1,m_2 \geq 0 \,\,\quad {\rm and} \quad\,\,
 \bigl[ \,\, m_1+m_2 \geq s \,\,\,\,{\rm or} \,\,\,\,  4m_1m_2 \geq (m_1+m_2-s)^2 \,\bigr].
 \end{equation}
    This condition describes a closed convex cone in $\RR^3$.
    This is
    the  copositivity cone   $\mathcal{C}_G$ 
    for    (\ref{Eq:Bubble}).
    
The integral in (\ref{eq:univariateintegral2})
equals the Feynman integral $I_G(z)$ for $m_1=m_2 = 1$
and $s = 4- \epsilon $.
The point $z$ is in the interior of $\mathcal{C}_G$
if and only if $\epsilon > 0$.
This can be made manifest
by {\em P\'olya's method} \cite{Polya},
namely we certify copositivity by
showing that $\mathcal{F}_z(x)$ times 
 $(x_1 + x_2)^N$ has positive coefficients,
 for $N \gg 0$.
  By \cite[page 222]{PowersReznick},
 the smallest integer we can take is
 $$ N \, = \, 2 \left\lceil \frac{2}{\epsilon} \right\rceil -3. $$

See Figure \ref{FIG:BubbleC} for a cross section
of the copositive cone $\mathcal{C}_G$.
The left diagram shows the set $\mathcal{C}_G \cap \{3m_1+3m_2 = 6+s\}$.
This overlapping union of a triangle and an ellipse 
is not a basic semi-algebraic set.
Thus $\mathcal{C}_G$ is 
a spectrahedral shadow but it is
not a spectrahedron.
\end{example}

\begin{figure}[t]
\centering
\includegraphics[scale=0.42]{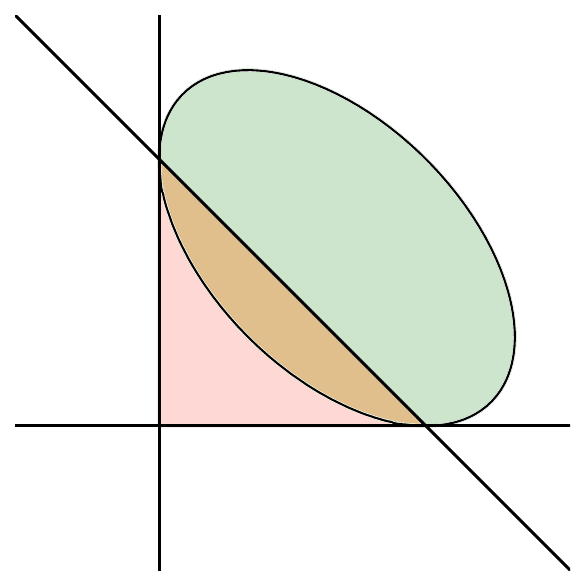} \qquad \qquad \qquad
\includegraphics[scale=0.61]{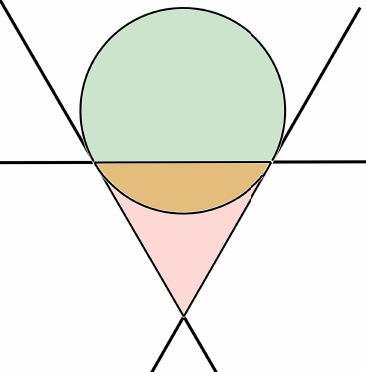} \vspace{-0.15cm}
\caption{The copositive cone $\mathcal{C}_G$ for the bubble diagram. The left diagram shows the intersection of $\mathcal{C}_G$ with the plane $\{3m_1 + 3m_2 = 6 + s\}$. The triangle 
is the region $\{m_1 \geq 0, \, m_2 \geq 0, \, m_1  + m_2 \geq s\}$, and the ellipse is   $\{m_1 \geq 0, \, m_2 \geq 0, \, 4m_1m_2 \geq (m_1 + m_2 -s)^2\}$. The right diagram is the same after the coordinate change $c_1 = m_1, \, c_2 = m_2, \, c_3 = m_1+m_2-s$.}
\label{FIG:BubbleC}
\end{figure}

We now present the organization of this paper, and we highlight our main results.
Section~\ref{sec2} gives a self-contained introduction to Feynman graphs, 
their kinematic parameters $z$, and the two Symanzik polynomials.
These specify the integrand in the Feynman integral $I_G(z)$.
Generalizing  Example \ref{ex:bubble}, we discuss how the
convergence of $I_G(z)$  depends on~$z$.

Most of the literature on copositive optimization
centers around quadratic forms and 
symmetric matrices. Even in this case, 
copositive geometry is highly nontrivial;
see e.g.~\cite{BomzeDur, deKlerkPasechnik, Vargas}.
For instance, testing membership in $\mathcal{C}_G$ is
an NP-hard problem.  In Section \ref{sec3} we transfer
this hardness to the physics context. In
Theorem \ref{thm:universality}, we prove that every quadratic form arises from 
a Feynman graph with one loop for some choice of
kinematic parameters.

Section \ref{sec4} offers case studies for
small Feynman graphs which are popular
in the literature, notably at the interface of physics
and number theory. The simplest of these
are the banana graphs,  which relate to
 Calabi-Yau varieties 
\cite[Chapter 14]{Weinzierl}.
Theorem~\ref{thm:banana} gives an explicit description of
the copositive cone for any banana graph.
We also explore the double box, non-planar double box, and the beetle, as in
\cite[Section IV]{CKN}. For us, all particles are massive.

In Section \ref{sec5} we develop copositive geometry
in the general setting of Gel'fand, Kapranov and Zelevinsky \cite{GKZ}.
For any support set $\mathcal{A}$, the copositive cone 
$\mathcal{C}_\mathcal{A}$ is
dual to the cone~over the positive toric variety, and its boundary is
contained in the principal  $\mathcal{A}$-determinant.
The specialization to Feynman polynomials leads
us to the theory in \cite{FMT, MizeraTelen}. We show that
the algebraic boundary of $\mathcal{C}_G$
is a subvariety of the principal Landau determinant.
The distinction between $\mathcal{C}_\mathcal{A}$ and 
$\mathcal{C}_G$ is worked out in detail for the parachute graph
(Example~\ref{Ex:Parachute}).

In Section \ref{sec6} we prove that P\'olya certificates \cite{Polya} exist
for all interior points of the copositive cone $\mathcal{C}_G$.
This rests on the special structure of Feynman polytopes, which
makes our problem amenable to  {\em Pólya's Theorem with Zeros}, due to
Castle, Powers and Reznick \cite{CastlePowersReznick}.
Theorem~\ref{Thm:PolyaSymanzik} states these apply to
all Feynman graphs $G$ and all kinematic parameters $z$.
The dedication of this paper recognizes the
importance of Vicki Powers' contributions \cite{CastlePowersReznick, PowersReznick}.

For any Feynman graph $G$ and specific kinematic parameters $z$, one seeks to
decide whether $z$ lies in $\mathcal{C}_G$ or not.
In either case, the outcome should be made manifest.
Section~\ref{sec7} is devoted to an algorithm for
making that decision, and for producing the desired certificates.
We focus on practical tools, and we present a proof-of-concept implementation of our algorithm in the Julia package \texttt{CopositiveFeynman.jl}. The code is made available
	 in the {\tt MathRepo} collection at MPI-MiS via
	\url{https://mathrepo.mis.mpg.de/CopositiveFeynman}.

This article points to many possibilities for future research.
These promise new connections  between
theoretical physics, polynomial optimization, and applied algebraic geometry.

\section{Graphs, Polynomials, and Integrals}
\label{sec2}

In this section, we review some basics on Feynman integrals,
following the text book \cite{Weinzierl}
and the articles \cite{Borinsky, BorinskyMunchTellander, HennRaman, MizeraTelen}.
There are many equivalent ways to write a Feynman integral \cite[Section 2.5]{Weinzierl}. 
We use the {\em Feynman parameter representation}, in which the integral reads
\begin{align}
\label{Eq:FeynmParamInt}
    I_G(z) \,\,= \,\,\frac{\Gamma\!\left(\,\sum_{i=1}^n \nu_i- \ell D / 2\right)}{\prod_{i=1}^n \Gamma\left(\nu_i\right)} \int_{\mathbb{P}_{>0}^{n-1}} \frac{\left(\,\prod_{i=1}^n x_i^{\nu_i} \right)\mathcal{U}(x)^{|\nu|-(\ell+1) D /2}}{\mathcal{F}_z(x)^{|\nu| - \ell D / 2}} \,\Omega.
\end{align}
An example with $n=2,\ell = 1,\nu = (2,2)$ and $D = 2$ can be seen in
equations (\ref{eq:univariateintegral2}) and (\ref{Eq:Bubble}).

We now explain the parts of \eqref{Eq:FeynmParamInt}.
The integration domain is the \emph{positive projective orthant}
\begin{align}
    \mathbb{P}_{>0}^{n-1} \,\,:=\,\, 
  \left\{\, [x_1 \colon \cdots \,\colon\, x_n] \in \mathbb{P}_{\mathbb{R}}^{n-1} \mid x_1 > 0, \dots , x_n > 0 \, \right\}, 
\end{align}
and we integrate against the differential form $\Omega=\sum_{i=1}^n(-1)^{n-i} \frac{\mathrm{~d} x_1}{x_1} \wedge \cdots \wedge \frac{\widehat{\mathrm{~d} x_i}}{x_i} \wedge \cdots \wedge \frac{\mathrm{~d} x_n}{x_n}$.
The notation $\tfrac{\widehat{\mathrm{d} x_i}}{x_i}$ indicates that $\tfrac{\mathrm{d} x_i}{x_i}$ is excluded from the product of $1$-forms.
The exponents $\nu_1, \dots, \nu_n$ are nonnegative real numbers,  and $|\nu| := \nu_1 + \dots + \nu_n$. The integer $D$ is the \emph{space-time dimension}. 
The prefactor is a constant, written with
Euler’s Gamma function~$\Gamma$. 

The integrand in \eqref{Eq:FeynmParamInt} is derived from a \emph{Feynman graph}. 
This is a graph  $G$ with vertex set $V$ and edge set $E$.  The \emph{valency} of a vertex $v \in V$ is the number of edges $e \in E$ incident to $v$.
If $v$ has valency $1$, then $e_v$ denotes the unique edge adjacent to $v$.
We call $e_v$ an \emph{external edge}. All other edges $e \in E$ are \emph{internal edges}.
Given a graph $G$ with $N$ external edges and $n$ internal edges, we associate \emph{momentum vectors} $p_1, \dots, p_N \in \mathbb{R}^D$ to the external edges and \emph{squared internal masses} $m_1, \dots, m_n \in \mathbb{R}$ to the internal edges.
For simplicity, in this article, we refer to the parameters $m_1, \dots, m_n$ as \emph{internal masses}, slightly deviating from the terminology commonly used in standard physics literature.
A \emph{Feynman graph} is a connected graph $G$ equipped with external momentum vectors and internal masses. When these are clear from the context, we may 
identify the Feynman graph with the graph $G$. 

We write $\ell$ for the number of independent cycles in the graph $G$. In the context of Feynman graphs, $\ell$ is called the number of \emph{loops} of $G$. Since $G$ is connected, we have
\begin{align*}
    \ell\,\, =\,\, \# E - \#V + 1.
\end{align*}
It remains to define the \emph{graph polynomials} $\mathcal{U}(x)$ and $ \mathcal{F}_z(x)$. 
The variables $x_1,\ldots,x_n$ are associated to the internal edges of $G$.
A \emph{spanning tree} of $G$ is a connected subgraph without cycles that contains all vertices of $G$.
  We write $\mathcal{T}$ for the set of all spanning trees of $G$. 
The \emph{first Symanzik polynomial} is the following sum of squarefree monomials,
all having degree $\ell$:
\begin{align*}
    \mathcal{U}(x) \,\,:=\,\, \sum_{T \in \mathcal{T}} \prod_{e \notin T} x_e.
\end{align*}
In matroid theory, we view $\mathcal{U}(x)$ as the sum over all bases of the cographic matroid of $G$.

A \emph{spanning $2$-forest} is a subgraph of $G$ that contains all vertices and all external edges, does not contain a cycle, and has two connected component. We write $T_1,T_2$ for the two connected components of a spanning $2$-forest. The set of all spanning $2$-forests is denoted by $\mathcal{W}$.
For a spanning $2$-forest $\{T_1,T_2\} \in \mathcal{W}$,
let $I_{T_1}, I_{T_2} \subset [N]$ be the index sets of the external edges that are attached to $T_1$ and $T_2$ respectively. The \emph{second Symanzik polynomial}~is 
\begin{align}
\label{Eq:F}
    \mathcal{F}_z(x)\,\, := \sum_{\{T_1,T_2\} \in \mathcal{W}}\!\biggl( \,\sum_{i \in I_{T_1}} \sum_{j \in I_{T_2}} k_{ij} 
    \biggr) \!\! \prod_{e \notin T_1 \sqcup T_2} \!\! \!x_e \,
    \,+ \,\,\biggl(\,\sum_{e=1}^n m_e x_e \biggr)\, \mathcal{U}(x).
\end{align}
Here, $k_{ij} = p_i \cdot p_j = p_{i1}p_{j1} -  p_{i2}p_{j2}-\dots -  p_{iD}p_{jD}$ 
denotes the \emph{Minkowski scalar product}. Note that $
\mathcal{F}_z(x)$ is homogeneous  of degree $\ell + 1$, because
  each spanning $2$-forest is obtained by removing $\ell + 1$ internal edges from $G$.
  The monomials that appear give two nice polytopes.

\begin{lemma} \label{lem:NimaSebastian}
The Newton polytopes of the two Symanzik polynomials have
the following descriptions:
 ${\rm Newt}(\mathcal{U})$ is the
matroid polytope of the cographic matroid of $G$, while
$\Newt(\mathcal{F})$ is 
the Minkowski sum of the matroid polytope $\Newt(\mathcal{U})$ 
with the simplex $\Delta_{n-1} = \Newt(\sum_{i=1}^n x_i)$.
\end{lemma}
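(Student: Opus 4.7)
The plan is to match monomials directly with lattice points on the right-hand sides, using the bijection between spanning trees (resp.\ spanning $2$-forests) of $G$ and the squarefree monomials of $\mathcal{U}$ (resp.\ of the first sum in $\mathcal{F}_z$).

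First I would handle $\mathcal{U}(x)$. Each spanning tree $T$ has $|V|-1$ edges, so its complement $E\setminus T$ has $n-|V|+1=\ell$ edges, and contributes the squarefree monomial $\prod_{e\notin T}x_e$ of degree $\ell$ with support $E\setminus T$. By matroid duality, the complements of spanning trees of $G$ are precisely the bases of the cographic matroid $M^{*}(G)$. Hence $\supp(\mathcal{U})$ is the set of indicator vectors of the bases of $M^{*}(G)$, so $\Newt(\mathcal{U})$ is the matroid polytope of $M^{*}(G)$.

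Next I would prove the containment $\Newt(\mathcal{F}_z)\subseteq \Newt(\mathcal{U})+\Delta_{n-1}$ by separating the two sums in \eqref{Eq:F}. The terms of $\bigl(\sum_{e}m_e x_e\bigr)\mathcal{U}(x)$ are products of a single $x_e$ with a monomial of $\mathcal{U}$, so their exponents have the form $\mathbf{1}_{E\setminus T}+e_e$ and lie in the Minkowski sum by construction. For a spanning $2$-forest term $\prod_{e\notin T_1\sqcup T_2}x_e$, which is squarefree of degree $\ell+1$, I would select an edge $i\in E\setminus(T_1\sqcup T_2)$ that connects the components $T_1$ and $T_2$; such an edge exists because $G$ is connected. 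Then $T:=T_1\sqcup T_2\cup\{i\}$ is a spanning tree, and the exponent vector equals $\mathbf{1}_{E\setminus T}+e_i$, again in $\Newt(\mathcal{U})+\Delta_{n-1}$.

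For the reverse inclusion it suffices to realize every sum $\mathbf{1}_{E\setminus T}+e_i$ (with $T$ a spanning tree and $i\in [n]$) as the exponent of some monomial of $\mathcal{F}_z$. If $i\notin T$, the monomial $x_i\cdot\prod_{e'\notin T}x_{e'}$ occurs inside $m_i x_i\,\mathcal{U}(x)$. If $i\in T$, then $T\setminus\{i\}$ has exactly two connected components $\{T_1,T_2\}$, which form a spanning $2$-forest whose associated monomial has exponent $\mathbf{1}_{E\setminus T}+e_i$. Since every point of $\Newt(\mathcal{U})+\Delta_{n-1}$ is a convex combination of such sums, this gives the reverse inclusion.

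The main obstacle is to rule out accidental coefficient cancellations: a type (a) term in \eqref{Eq:F} carries a coefficient $\sum_{i\in I_{T_1},\,j\in I_{T_2}}k_{ij}$ that could vanish for very special kinematics, and even a type (b) term is only present when the corresponding mass $m_e$ is nonzero. I would settle this by reading $\mathcal{F}_z$ as a polynomial in $x$ with coefficients in the ring of kinematic parameters (i.e.\ treating the $k_{ij}$ and $m_e$ as indeterminates), so that every monomial identified combinatorially above has a truly nonzero coefficient. With that convention in place, the proof reduces to the combinatorial matching sketched above.
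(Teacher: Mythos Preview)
Your argument is correct and is precisely the combinatorial unpacking that the paper leaves implicit: the paper's own proof is a single sentence, ``This follows from the definitions of these polytopes,'' together with a reference to \cite[Section IV.A]{AHM}. Your two inclusions for $\Newt(\mathcal{F})$ via the decomposition $\mathbf{1}_{E\setminus T}+e_i$ are exactly the right way to make this explicit; one small simplification is that in the reverse inclusion you do not need to invoke the $2$-forest term when $i\in T$, since the mass term $m_i x_i\cdot\prod_{e\notin T}x_e$ already realizes the exponent $\mathbf{1}_{E\setminus T}+e_i$ with coefficient containing $m_i$, which also sidesteps the edge case where one component of $T\setminus\{i\}$ carries no external legs and the $k_{ij}$-sum is empty.
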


\begin{proof}
This follows from the definitions of these polytopes.
See also \cite[Section IV.A]{AHM}.
\end{proof}

  We regard $\mathcal{F}_z(x)$ as a parametrized polynomial.
  The variables are $  x_1, \dots, x_n$ and the parameter
  vector  $z$  has entries $k_{ij}$ and $m_e$. 
The Feynman integral $I_G(z)$ is a function of
these parameters. The integral often diverges.
Whether this happens or not depends on~$z$.
The following sufficient condition for convergence 
was given by Borinsky in \cite[Theorem 3]{Borinsky}.

\begin{theorem} \label{Prop:ConvergInER}
Fix a Feynman graph $G$, and choose $D \in \NN$ and
$\nu \in \mathbb{R}^n_{>0}$ such that
\begin{equation}
\label{eq:polytopecontainment}
 \nu \,+\, (|\nu|-(\ell\!+\!1) D /2) \Newt \! \big(\mathcal{U}(x)\big) 
\,\, \subseteq \,\,\relint\Big( (|\nu|-\ell D /2) \Newt \! \big(\mathcal{F}_z\big)\Big).
\end{equation}
If $z $ is in the interior of
the  copositive cone $\mathcal{C}_G$
 then the Feynman integral in \eqref{Eq:FeynmParamInt}  converges.
\end{theorem}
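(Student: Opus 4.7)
The plan is to reduce the statement to Borinsky's absolute-convergence criterion \cite[Theorem 3]{Borinsky}, which guarantees convergence of the Feynman integral \eqref{Eq:FeynmParamInt} under two hypotheses: the Newton polytope containment \eqref{eq:polytopecontainment}, which is granted, and the strict positivity $\mathcal{F}_z(x) > 0$ for all $x \in \RR_{>0}^n$. Hence the entire content of the theorem is the implication
\[
z \,\in\, \interior(\mathcal{C}_G) \,\Longrightarrow\, \mathcal{F}_z(x) \,>\, 0 \,\text{ for all } x \in \RR_{>0}^n.
\]

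I would prove this by contradiction. Suppose $\mathcal{F}_z(x^\star) = 0$ for some $x^\star \in \RR_{>0}^n$. By \eqref{Eq:F}, the evaluation $z' \mapsto \mathcal{F}_{z'}(x^\star)$ is a linear functional on the kinematic parameter space, and it is nonzero: since every coordinate $x_e^\star$ is strictly positive and $\mathcal{U}(x^\star) > 0$ (each of its monomials is a positive product of the $x_i^\star$), perturbing any single mass $m_e$ strictly changes $\mathcal{F}_{z'}(x^\star)$ through the summand $m_e x_e \mathcal{U}(x)$ in \eqref{Eq:F}. Consequently, every neighborhood of $z$ in the parameter space contains some $z'$ with $\mathcal{F}_{z'}(x^\star) < 0$, contradicting $z \in \interior(\mathcal{C}_G)$. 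This forces $\mathcal{F}_z$ to be strictly positive on the open orthant, and Borinsky's criterion concludes the proof.

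The step most prone to subtlety is the nondegeneracy of the linear functional $z' \mapsto \mathcal{F}_{z'}(x^\star)$, which is where the fully-massive assumption of Section \ref{sec2} enters in an essential way. Each edge $e$ contributes a free mass parameter $m_e$ via the summand $m_e x_e \mathcal{U}(x)$, and because $x_e^\star > 0$ for every $e$ this parameter acts nontrivially on $\mathcal{F}_{z'}(x^\star)$. Without this massiveness---say if a subset of the $m_e$ were fixed to zero---the argument would require a more careful choice of perturbation direction, and in the purely massless case $\mathcal{C}_G$ degenerates to the polyhedral cone noted after Example \ref{ex:bubble}, where the present copositive geometry becomes vacuous.
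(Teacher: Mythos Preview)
Your proposal is correct and follows the same route as the paper: the paper also attributes the convergence statement directly to Borinsky \cite[Theorem~3]{Borinsky} and reduces everything to the implication that $z\in\interior(\mathcal{C}_G)$ forces $\mathcal{F}_z$ to be strictly positive (``completely non-vanishing'') on $\RR^n_{>0}$.

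The only difference lies in how that implication is argued. The paper does not prove it on the spot but defers it to Section~\ref{sec5}, where Lemma~\ref{eq:onboundary} handles it via compactness of the nonnegative toric variety $X_{\mathcal{A},\geq 0}$: a copositive $f$ with no zero on this compact set has an open neighborhood inside $\mathcal{C}_\mathcal{A}$. You instead give a direct linear-perturbation argument through the mass coordinates $m_e$, exploiting that $x_e^\star\,\mathcal{U}(x^\star)>0$. Both arguments are valid; yours is more elementary and self-contained, while the paper's compactness route simultaneously delivers the sharper face-by-face characterization of Lemma~\ref{Lemma:InteriorCoposCone}, which is needed elsewhere in the paper.
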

For $\nu = (2,2)$ and $ D=2$, up to a prefactor,  $I_G(z)$ is the integral~\eqref{eq:univariateintegral2}, which we discussed in the introduction.
The two polytopes in \eqref{eq:polytopecontainment} are $\conv ((2,4),(4,2))$ and $\conv((6,0),(0,6))$. 

The convergence in Theorem~\ref{Prop:ConvergInER} rests on two hypothesis.
First, there is combinatorics of polytopes, in the
containment  relation (\ref{eq:polytopecontainment}).
We can achieve this by choosing $D$ and $\nu$.
Second, there is copositive geometry: the denominator polynomial $\mathcal{F}_z$
must be strictly copositive. In \cite{Borinsky} and other sources, this is
phrased as saying that $\mathcal{F}_z$ is {\em completely non-vanishing} on $\PP^{n-1}_{>0}$.
We will see
in Section \ref{sec5} that this happens if $z$ lies in the interior of the  cone $\mathcal{C}_G$.

Theorem \ref{Prop:ConvergInER} justifies our argument that
the copositive cone replaces what is called
the {\em Euclidean region} in the physics literature;
see e.g.~\cite[Section 2.2]{BorinskyMunchTellander}
and \cite[Section 2.5.1]{Weinzierl}.
Here the authors require that $\mathcal{F}_z$ has
positive coefficients. If this holds then $z$
is in~$\mathcal{C}_G$, but for trivial reasons.
More interesting are points
$z \in {\rm int}(\mathcal{C}_G)$ for which
$\mathcal{F}_z$ has some negative coefficients.
This indicates scenarios where the Feynman integral  converges unexpectedly.

We now take a closer look at the parameter space
in which the copositive cone~$\mathcal{C}_G$ lives.
The internal masses $m_1,\ldots,m_n$ are real and otherwise
unconstrained.   The Gram matrix $K = (k_{ij})$ is symmetric and of size $N \times N$.
We assume that its row sums are zero, so the matrix has
$\binom{N}{2}$ indepenent entries.
 This reflects the {\em momentum conservation} assumption $p_1 + \dots + p_N = 0$. 
In physics, the Gram matrix is also Lorentzian and of rank at most~$D$.
The semialgebraic constraints this would impose are studied in \cite{CFS}.
We here relax all inequalities and all rank constraints, and we
view $z = (m_e,k_{ij})$ as an arbitrary real vector of
length $\binom{N}{2}+n$. In other words, we identify our parameter space
 with the real vector space $\RR^{\binom{N}{2}+n }$.
 
 \begin{proposition}
$\mathcal{C}_G$ is a
full-dimensional closed semi-algebraic  convex cone in~$\RR^{\binom{N}{2} + n}$.
   \end{proposition}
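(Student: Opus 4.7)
The plan is to verify the four properties of $\mathcal{C}_G$ in turn, with full-dimensionality as the only non-routine point. Since $\mathcal{F}_z(x)$ depends linearly on $z$, for each fixed $x \in \RR^n_{\geq 0}$ the set $\{z \in \RR^{\binom{N}{2}+n} : \mathcal{F}_z(x) \geq 0\}$ is closed and convex, being either a closed half-space through the origin or all of $\RR^{\binom{N}{2}+n}$ (when the linear form $z \mapsto \mathcal{F}_z(x)$ vanishes identically). Their intersection $\mathcal{C}_G$ is therefore a closed convex set, and it is a cone because $\mathcal{F}_{\lambda z} = \lambda \mathcal{F}_z$ for $\lambda \geq 0$. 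Semi-algebraicity is immediate from Tarski--Seidenberg quantifier elimination applied to the first-order formula $\forall x\colon \bigl[(x_1 \geq 0)\wedge\dots\wedge(x_n \geq 0)\bigr] \Rightarrow \mathcal{F}_z(x) \geq 0$.

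For full-dimensionality, I will exhibit one explicit interior point $z^{\star} = (m^{\star}, K^{\star})$ such that every monomial coefficient of $\mathcal{F}_{z^{\star}}(x)$ is strictly positive wherever it is a non-trivial linear form in $z$. Set $m^{\star}_e := 1$ for all $e$, and take the Gram matrix $K^{\star}$ with $k^{\star}_{ii} = -(N-1)/N$ and $k^{\star}_{ij} = 1/N$ for $i \neq j$. Then $K^{\star}$ is symmetric with zero row sums, so $z^{\star}$ lies in the parameter space. A direct calculation gives
$$\sum_{i \in S,\, j \in [N] \setminus S} k^{\star}_{ij} \,=\, \frac{|S|\,(N-|S|)}{N} \,>\, 0 \qquad \text{for every proper nonempty } S \subsetneq [N].$$
Plugging into \eqref{Eq:F}, every spanning $2$-forest $\{T_1,T_2\}$ with both $I_{T_1}$ and $I_{T_2}$ nonempty contributes a strictly positive cross-sum, while the mass term $\bigl(\sum_e x_e\bigr)\mathcal{U}(x)$ has non-negative integer coefficients. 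Adding, every coefficient of $\mathcal{F}_{z^{\star}}$ is non-negative, and strictly positive whenever that coefficient is a non-trivial linear form in $z$.

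A polynomial with non-negative coefficients is copositive, so $z^{\star} \in \mathcal{C}_G$. For any perturbation $\delta$ of sufficiently small norm, each strictly positive coefficient of $\mathcal{F}_{z^{\star}}$ stays positive in $\mathcal{F}_{z^{\star}+\delta}$, while coefficients that vanish identically in $z$ remain zero; hence $\mathcal{F}_{z^{\star}+\delta}$ still has non-negative coefficients, and $z^{\star} + \delta \in \mathcal{C}_G$. This places $z^{\star}$ in $\interior(\mathcal{C}_G)$. The main subtlety is the bookkeeping for monomials whose only $2$-forest contributions come from splits with $I_{T_1} = \emptyset$ or $I_{T_2} = \emptyset$: those contributions vanish identically in $z$, so such a monomial can be active only through the mass sum, in which case the choice $m^{\star}_e = 1$ guarantees a strictly positive coefficient.
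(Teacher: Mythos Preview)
Your proof is correct and follows essentially the same approach as the paper: linearity in $z$ for closedness/convexity, Tarski--Seidenberg for semi-algebraicity, and positivity of coefficients for full-dimensionality. The only difference is in the last step, where the paper observes more directly that the entire positive orthant $\RR^{\binom{N}{2}+n}_{>0}$ lies in $\mathcal{C}_G$ (since positive $k_{ij}$ and $m_e$ make every coefficient of $\mathcal{F}_z$ nonnegative), which gives full dimension immediately without constructing a single interior point and perturbing.
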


\begin{proof} For any $u \in \RR^n_{\geq 0}$, the inequality
 $\mathcal{F}_z(u) \geq 0$ is linear in $z$. By definition, the set
   $\mathcal{C}_G$ consists of  all solutions to this infinite system of linear
 inequalities. It is therefore a closed convex cone.
Tarski's Theorem on Quantifier Elimination implies
that  $\mathcal{C}_G$ is semi-algebraic, i.e.~it can
be described by a finite Boolean combination of
polynomial inequalities in $z$. Furthermore, the  cone
$\mathcal{C}_G$ is full-dimensional because it contains 
an orthant $\RR^{\binom{N}{2}+n}_{> 0}$.  Equivalently,
the polynomial in~(\ref{Eq:F}) is copositive
when all of its coefficients are positive. 
\end{proof}

\begin{remark}
The copositive cone $\mathcal{C}_G$ is generally not
a pointed cone. It contains a linear subspace of positive dimension.
For instance, for the bubble diagram in Example~\ref{ex:bubble}, 
the parameter space is $ \RR^{8}$, with two coordinates
$m_i$ and six coordinates $k_{ij}$.
The $8$-dimensional cone $\mathcal{C}_G$ contains the $5$-dimensional
linear space defined by
$m_1 = m_2 = s = 0$. Here we set
$s = -k_{13} - k_{14} - k_{23} - k_{24}$,
as seen from (\ref{Eq:FRunning}) and  (\ref{Eq:F}).
In practice, we work modulo the lineality space.
Thus, for the bubble diagram, we regard
$\mathcal{C}_G$ as the $3$-dimensional cone over Figure \ref{FIG:BubbleC}.
\end{remark}

\section{Copositive Matrices and One-Loop Diagrams}
\label{sec3}

The adjective {\em copositive} was coined by
Motzkin in his 1952 paper \cite{Motzkin}.
We write $\mathcal{C}_{n,d}$ for the \emph{copositive cone}, which consists of the copositive polynomials
in the space $\RR[x_1,\ldots,x_n]_d \simeq 
\RR^{\binom{n+d-1}{d}}$ of all homogeneous
polynomials of degree $d$ in $n$ variables.
The study of $\mathcal{C}_{n,d}$ is an active area of research
in optimization; see \cite[Section 9]{Nie:Book}.
Most of that literature centers around the case of quadratic forms $(d=2)$.
A symmetric $n \times n$ matrix $C$ is called {\em copositive} if its
associated quadratic form $f(x) = x^{\top} C x$ is copositive \cite{Hadeler}.
Thus $\mathcal{C}_{n,2}$ is the copositive cone in 
the space $\RR^{\binom{n+1}{2}}$ of symmetric matrices.

The cone $\mathcal{C}_{n,2}$ is a surprisingly complicated object.
Deciding whether a given symmetric matrix $C$ lies in $\mathcal{C}_{n,2}$
is an NP-hard problem. See \cite[Section 4]{Dur}. A wide range of
combinatorial optimization problems can be reduced to this membership problem.
Bomze et~al. \cite{BomzeDur} introduced the term 
\emph{copositive programming} for  
optimization problems modeled on~$\mathcal{C}_{n,2}$.

The copositive cone $\mathcal{C}_{n,2}$ has two natural subcones,
namely the  orthant  $\RR^{\binom{n+1}{2}}_{\geq 0}$ of matrices
with nonnegative entries and the cone
${\rm PSD}_n$ of positive semidefinite matrices. We have
\begin{equation}
\label{eq:coneinclusion}
\RR^{\binom{n+1}{2}}_{\geq 0}  \,+ \, 
{\rm PSD}_n \,\, \subseteq \,\, 
\mathcal{C}_{n,2}.
\end{equation}
A matrix is manifestly copositive if it is
the sum of a nonnegative matrix and
a positive semidefinite matrix. This certificate
always works for $n \leq 4$. Indeed, it is
known that the equality holds in (\ref{eq:coneinclusion})
for $n=2,3,4$.
However, this fails for $n \geq 5$. Here is a famous example.

\begin{example}[Horn is manifestly copositive] \label{ex:horn}
Let $n=5$ and consider the quadratic form
 \[ h \,=\, x_1^2 + x_2^2 + x_3^2 + x_4^2 + x_5^2
       -2 (x_1x_2 + x_2x_3 + x_3x_4 + x_4x_5 + x_5x_1) +2 (x_1x_3 + x_2x_4 + x_3x_5 + x_4x_1 + x_5x_2).\]
The corresponding symmetric  $5 \times 5$ matrix is known as the {\em Horn matrix}. It  equals
$$
    H \,\,=\,\, \begin{footnotesize} \begin{pmatrix}
        \phantom{-}1 & -1 &  \phantom{-}1 &  \phantom{-}1 & -1 \,\,\\
        -1 &  \phantom{-}1 & -1 &  \phantom{-}1 &  \phantom{-}1 \,\,\\
         \phantom{-}1 & -1 &  \phantom{-}1 & -1 &  \phantom{-}1 \,\,\\
         \phantom{-}1 &  \phantom{-}1 & -1 &  \phantom{-}1 & -1 \,\, \\
        -1 &  \phantom{-}1 &  \phantom{-}1 & -1 &  \phantom{-}1 \,\,
    \end{pmatrix}. \end{footnotesize}
$$
It is known that $H$ is not
in the left hand side of
(\ref{eq:coneinclusion}).
However, $H$ is  copositive, because
$$
\begin{matrix} 4(
 x_1 x_2 x_4+
 x_2 x_3 x_5+
 x_3 x_4 x_1+
 x_4 x_5 x_2+
 x_5 x_1 x_3) \,+\,
 x_1 (x_1-x_2+x_3+x_4-x_5)^2 \\ \,\,+ \,\,
 x_2 (x_2-x_3+x_4+x_5-x_1)^2 \,+\,
 x_3 (x_3-x_4+x_5+x_1-x_2)^2 \\\, \,\,+\,\,
 x_4 (x_4-x_5+x_1+x_2-x_3)^2 \,+\,
 x_5 (x_5-x_1+x_2+x_3-x_4)^2 
  \end{matrix}
$$ 
factors into $\, h(x) \cdot (x_1+x_2+x_3+x_4+x_5)  $.
This formula is due to Parrilo \cite[Section 5.4]{Parrilo}.
\end{example}

 Example \ref{ex:horn} suggests that copositivity 
of a quadratic form $h$ can be made manifest by writing
$\, h(x_1^2,\ldots,x_n^2) \cdot (x_1^2 + \cdots + x_n^2)^r \,$
as a sum of squares (SOS) for some $r \in \NN$.
De Klerk and Pasechnik \cite{deKlerkPasechnik} 
used this to compute stability numbers of graphs.
Schweighofer and Vargas \cite{SchweighoferVargas} proved that an SOS representation always
exists for $n=5$, but it was shown 
in \cite{LaurentVargas} that this no longer works for $n \geq 6$.
A recent advance by Bodirsky, Kummer and Thom \cite{BKT} 
reveals that $\mathcal{C}_{n,2}$ is not a spectrahedral shadow for $n \geq 5$.
In spite of these negative results, we still seek practical tools for certifying copositivity.
In Section~\ref{sec6} we examine P\'olya's classical method~\cite{CastlePowersReznick, Polya}.

For the remainder of this section, we return to our physics application.
In Theorem \ref{thm:universality} we will show that every quadratic form can be
realized by a Feynman graph with $\ell=1$. This can be viewed as a
hardness result for testing the convergence of Feynman integrals.

We now  focus on \emph{one-loop Feynman graphs} with $n$ internal edges and $n$ external edges:
\begin{equation}
\label{Eq:Bubble2}
\begin{aligned}
\begin{tikzpicture}[scale=0.5]
          \node[fill=black, circle, inner sep=1.5pt]
    (D1) at (-1.5, 1) {};
       \node[fill=gray, circle, inner sep=1.5pt]  (A1) at (-3, 2.5) {};
         \draw (D1) -- (A1);
             \node[] (P1) at (-3.7, 2.5) {$p_1$};;
              \node[fill=black, circle, inner sep=1.5pt]
    (D2) at (0, 2) {};
           \draw (D1) to (D2);
            \node[] (P1) at (-1, 1.9) {$m_2$};
     \node[fill=black, circle, inner sep=1.5pt]
    (D3) at (1.5, 1) {};
       \node[fill=black, circle, inner sep=1.5pt]
    (D4) at (1.5, -1) {};
      \node[fill=black, circle, inner sep=1.5pt]
    (D5) at (0, -2) {};
       \node[fill=black, circle, inner sep=1.5pt]
    (D6) at (-1.5, -1) {};
      \node[fill=gray, circle, inner sep=1.5pt]  (A2) at (0, 3.5) {};
         \draw (D2) -- (A2);
             \node[] (P2) at (0.7, 3.5) {$p_2$};
               \node[] (M3) at (1, 1.9) {$m_3$};
              \node[fill=black, circle, inner sep=1.5pt]
    (D2) at (0, 2) {};
           \draw (D1) to (D2);
           \draw (D2) to (D3);
           \draw (D3) to (D4);
           \draw[dashed] (D4) to (D5);
           \draw (D5) to (D6);
           \draw (D1) to (D6);
            \node[fill=black, circle, inner sep=1.5pt]
    (D1) at (1.5, -1) {};
       \node[fill=gray, circle, inner sep=1.5pt]  (A4) at (3, -2.5) {};
         \draw (D4) -- (A4);
             \node[] (P4) at (3.7, -2.5) {$p_4$};;
               \node[fill=gray, circle, inner sep=1.5pt]  (A5) at (0, -3.5) {};
         \draw (D5) -- (A5);
             \node[] (P5) at (-0.1, -4) {$p_{n-1}$};
               \node[] (M5) at (-1, -1.9) {$m_n$};
                    \node[fill=gray, circle, inner sep=1.5pt]  (A3) at (3, 2.5) {};
         \draw (D3) -- (A3);
             \node[] (P6) at (-3.7, -2.5) {$p_n$};
              \node[fill=gray, circle, inner sep=1.5pt]  (A6) at (-3, -2.5) {};
         \draw (D6) -- (A6);
             \node[] (P3) at (3.7, 2.5) {$p_3$};
                \node[] (M1) at (-2.2, 0) {$m_1$};
                 \node[] (M4) at (2.2, 0) {$m_4$};
\end{tikzpicture}
\end{aligned}
\end{equation}
The second Symanzik polynomial for the graph in (\ref{Eq:Bubble2})
is the quadratic form
 \begin{align}
 \label{Eq:OneloopF}
  \mathcal{F}_z(x) \,\,\,= \,\,\,
  \sum_{i=1}^n m_i x_i^2 \,+\, \sum_{i=1}^n \sum_{j=i+1}^n \Big( m_i + m_j + \sum_{a \in I_{i}^j} \sum_{b \in [n] \setminus I_{i}^j} k_{ab} \Big)x_i x_j,
    \end{align}
    where $I_i^j = \{i,\dots,j-1\}$ for $i < j$, and $[n] = \{1, \dots ,n\}$. Our result states that every quadratic polynomial can be obtained as a second Symanzik polynomial for some choice of the kinematic parameters.

\begin{theorem} \label{thm:universality}
 For every quadratic form $f \in \mathbb{R}[x_1, \dots, x_n]_2$, 
 the graph in (\ref{Eq:Bubble2}) admits
   kinematic parameters $z = (k_{ij},m_e)$ 
   such that $f(x)$ equals
   the second Symanzik polynomial    $\mathcal{F}_z(x)$.
\end{theorem}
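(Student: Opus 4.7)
The plan is to reduce the claim to a dimension count plus an injectivity check for a specific linear map. The map $z = (m_e, k_{ab}) \mapsto \mathcal{F}_z \in \RR[x_1, \ldots, x_n]_2$ is linear, and both its source and its target have dimension $\binom{n+1}{2}$: the target has one basis monomial per pair $i \le j$, while the source has $n$ mass parameters plus $\binom{n}{2}$ independent entries of the symmetric $n \times n$ Gram matrix $K = (k_{ab})$ after imposing momentum conservation $\sum_b k_{ab} = 0$. Hence surjectivity is equivalent to injectivity.

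First, I would match the squares. By (\ref{Eq:OneloopF}) the coefficient of $x_i^2$ in $\mathcal{F}_z$ equals $m_i$, so setting $m_i$ to the prescribed $x_i^2$-coefficient of $f$ reduces the problem to realizing the off-diagonal coefficients through the $\binom{n}{2}$-dimensional family of matrices $K$. For $i < j$, the coefficient of $x_i x_j$ in $\mathcal{F}_z$ is $m_i + m_j + S_{ij}$ with
$$S_{ij} \,=\, \sum_{a \in I_i^j}\sum_{b \in [n] \setminus I_i^j} k_{ab}, \qquad I_i^j = \{i, \ldots, j-1\}.$$
Using row-sum zero and the symmetry of $K$, I would derive the identity $S_{ij} = -\sigma_{ij}$, where $\sigma_{ij} := \sum_{a, b \in I_i^j} k_{ab}$ is the corresponding interval sum of $K$. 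The task thus becomes proving that the linear map $K \mapsto (\sigma_{ij})_{1 \le i < j \le n}$ is injective on the $\binom{n}{2}$-dimensional space of symmetric row-sum-zero matrices.

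The main tool is the telescoping identity
$$2 k_{ij} \,=\, \sigma_{i, j+1} - \sigma_{i+1, j+1} - \sigma_{ij} + \sigma_{i+1, j}, \qquad 1 \le i < j \le n,$$
verified by direct expansion. Together with $k_{ii} = \sigma_{i, i+1}$, this lets one recover every entry of $K$ from the interval sums, so vanishing of all $\sigma_{ij}$ forces $K = 0$. The main obstacle is the boundary case $j = n$, where $\sigma_{i, n+1}$ appears in the formula but lies outside the given data. I would resolve this by invoking the further algebraic identity $\sigma_{i, n+1} = \sigma_{1, i}$: writing $I = \{1, \ldots, i-1\}$ and $J = \{i, \ldots, n\} = [n] \setminus I$, the row-sum condition gives $\sigma_{1, i} = -\sum_{a \in I, b \in J} k_{ab}$ and $\sigma_{i, n+1} = -\sum_{a \in J, b \in I} k_{ab}$, and these are equal by the symmetry $k_{ab} = k_{ba}$. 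With this substitution the recursion closes within the prescribed data, injectivity follows, and equality of dimensions upgrades injectivity to the desired surjectivity, producing the required kinematic parameters $z$ for any given quadratic form $f$.
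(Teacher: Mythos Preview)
Your argument is correct and follows the same overall architecture as the paper: a dimension count reduces surjectivity of $z\mapsto\mathcal{F}_z$ to injectivity of $K\mapsto(S_{ij})$ on symmetric row-sum-zero matrices, and you then invert this map explicitly. The paper does the inversion by induction on $|a-b|$, peeling off one layer of $k_{ab}$'s at a time using momentum conservation. You instead pass to the interval sums $\sigma_{ij}$ and use the discrete second-difference identity $2k_{ij}=\sigma_{i,j+1}-\sigma_{i+1,j+1}-\sigma_{ij}+\sigma_{i+1,j}$, closing the boundary case $j=n$ via the complementarity relation $\sigma_{i,n+1}=\sigma_{1,i}$. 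This is a genuinely cleaner inversion: it yields a closed formula for each $k_{ij}$ rather than a recursive one, and the complementarity step makes the role of momentum conservation transparent. One small point worth stating explicitly is the convention $\sigma_{jj}=0$ (and $\sigma_{1,1}=0$), which you use implicitly when $j=i+1$ or $i=1$; with that in place the recursion closes as claimed.
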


\begin{proof}
We start with an arbitrary symmetric $n \times n$ matrix $C = (c_{ij})$ and we 
write
    \begin{align*}
    f(x) \,\,=\,\, \sum_{i=1}^n \sum_{j=i}^n c_{ij}x_i x_j
    \end{align*}
We set  $m_i = c_{ii}$ for the masses. With this,
  the coefficients of
          $x_i^2$ in $f(x)$ and in $\mathcal{F}_z(x)$ agree.
To match the remaining coefficients, we must solve the following system of linear equations:
\begin{equation}
\label{eq:tomatch}
                \sum_{a \in I_{i}^j} \sum_{b \in [n] \setminus I_{i}^j} \! k_{ab}
        \,\, = \,\, c_{ij} - c_{ii} - c_{jj}
        \qquad \text{for } \,
        1\leq i < j \leq n.
\end{equation}
Here $K = (k_{ij})$ is an unknown symmetric $n \times n$ matrix
with zero row sums and zero column sums, since we assume
momentum conservation. 
The space of such matrices $K$ is the domain for the
linear map $\RR^{\binom{n}{2}} \rightarrow
\RR^{\binom{n}{2}}$  given 
by the left hand side of (\ref{eq:tomatch}).
Our claim states that this linear map is surjective.
It suffices to show that
every $k_{ab}$ is a linear combination of 
    \begin{align*}
        M_{i,j}\,\, :=\,\,  \sum_{u \in I_{i}^j} \sum_{v \in [n] \setminus I_{i}^j} \!  k_{uv}
        \qquad {\rm for} \quad 1 \leq i < j \leq n.
    \end{align*}
    We prove this claim by induction on $a-b$. For $a-b = 0$, momentum conservation implies
    \begin{align*}
       - M_{a,a+1} \,\,\, = \,\,\,-\! \!\!\sum_{ d \in [n] \setminus \{a\} } k_{ad} \,\,= \,\,k_{aa}.
    \end{align*}
    
  Suppose each $k_{ab}$ with $|a-b| \leq q-1 $ is a linear combination of the $M_{i,j}$.
     In the induction step, we show that this is also true for $|a-b| = q  $. By momentum conservation, we have
    \begin{align*}
        M_{a,a+q +1} \,\,= \sum_{ c \in I_a^{a+q+1}} \sum_{ d \in [n] \setminus I_a^{a+q+1} }\! k_{cd} 
        \,\,\,=\,\,\, -\!\!\! \sum_{ c \in I_a^{a+q+1}} \sum_{ d \in I_a^{a+q+1} } \!k_{cd}.
    \end{align*}
This implies
      \begin{align*}
        k_{a,a+q} \,\,=\,\, \tfrac{1}{2} \Big( 
     -M_{a,a+q+1} \,\,-\!\! \sum_{ d \in I_a^{a+q} } \! k_{ad}   \,\,- \!\sum_{ c \in I_{a+1}^{a+q}} \sum_{ d \in I_a^{a+q+1} } \! \! k_{cd} \,\,\, -\!\!  \sum_{ d \in I_{a+1}^{a+q+1} } \!\!k_{a+q,d} \,\Big).
    \end{align*}
    By induction,      the right-hand side of this equation is a linear combination of the $M_{i,j}$'s.
    \end{proof}

\begin{example}[Pentagon] Fix the  one-loop diagram with five internal edges. 
Then we have
$$ \begin{small} \begin{matrix}
      \!\!\! \!\!\! \!\!\!  \!\!\!  \mathcal{F}_z(x) \,\,= \,\, m_1 x_1^2 + m_2x_2^2 + m_3x_3^2 + m_4x_4^2 + m_5x_5^2 
 + (m_1+m_2\!+\! k_{12}\!+\! k_{13}\!+\! k_{14}\!+\! k_{15})x_1x_2  \qquad \\
 +\, (m_2+m_3\!+\! k_{12}\!+\! k_{23}\!+\! k_{24}\!+\! k_{25})x_2x_3  
+ (m_3+m_4\!+\! k_{13}\!+\! k_{23}\!+\! k_{34}\!+\! k_{35})x_3x_4  \\
 +\, (m_4+m_5\!+\! k_{14}\!+\! k_{24}\!+\! k_{34}\!+\! k_{45})x_4x_5  
 + (m_1+m_5\!+\! k_{15}\!+\! k_{25}\!+\! k_{35}\!+\! k_{45})x_5x_1  \\
 + (m_1{+}m_3\!+\! k_{13}\!+\! k_{14}\!+\! k_{15}\!+\! k_{23}\!+\! k_{24}\!+\! k_{25})x_1x_3  
 + (m_1{+}m_4\!+\! k_{14}\!+\! k_{15}\!+\! k_{24}\!+\! k_{25}\!+\! k_{34}\!+\! k_{35})x_1x_4  \\
 + (m_2{+}m_4\!+\! k_{12}\!+\! k_{24}\!+\! k_{25}\!+\! k_{13}\!+\! k_{34}\!+\! k_{35})x_2x_4  
 + (m_2{+}m_5\!+\! k_{12}\!+\! k_{25}\!+\! k_{13}\!+\! k_{35}\!+\! k_{14}\!+\! k_{45})x_2x_5  \\
 +\, (m_3+m_5\!+\! k_{13}\!+\! k_{23}\!+\! k_{35}\!+\! k_{14}\!+\! k_{24}\!+\! k_{45})x_3x_5 .
      \end{matrix} \end{small}
$$
Theorem \ref{thm:universality} says that this covers every quadratic form.
The  Horn polynomial $h(x)$ from Example \ref{ex:horn}
equals the second Symanzik polynomial $\mathcal{F}_z(x)$ for
$m_1 \!=\! m_2 \!=\! \cdots \!=\! m_5 = 1$ and
$$ K \,\,\,=\,\,\, \begin{footnotesize} \begin{pmatrix}
            k_{11} & k_{12} & k_{13} & k_{14} &k_{15} \\
            k_{12} & k_{22} & k_{23} & k_{24} &k_{25} \\
            k_{13} & k_{23} & k_{33} & k_{34} &k_{35} \\
            k_{14} & k_{24} & k_{34} & k_{44}& k_{45} \\
            k_{15} & k_{25} & k_{35} & k_{45} &k_{55} \\
        \end{pmatrix}  \end{footnotesize}
          \,\,\,= \,\,\, \begin{footnotesize}\begin{pmatrix}
            \phantom{-}4 & -4 & \phantom{-}2 & \phantom{-}2 & -4  \,\, \\
            -4 & \phantom{-}4 & -4 & \phantom{-}2 & \phantom{-}2 \,\, \\
            \phantom{-}2 & -4 & \phantom{-}4 & -4 & \phantom{-}2\,\, \\
            \phantom{-}2 & \phantom{-}2 & -4 &\phantom{-} 4& -4\,\, \\
            -4 & \phantom{-}2 & \phantom{-}2 & -4 & \phantom{-}4\,\, \\
        \end{pmatrix}\!. \end{footnotesize}
$$
\end{example}
   
\smallskip
   
After Motzkin  \cite{Motzkin},
copositivity of matrices became a popular topic.
Numerous articles from the 20th century offer semialgebraic characterizations.
See the references in \cite{Hadeler}.
Each characterization amounts to a non-trivial Boolean combination
of inequalities on matrix entries and principal minors, and this is
consistent with the 21st century complexity results.

\begin{example}[Triangle diagram]
\label{Example:Triangle}
The triangle $G$ is~(\ref{Eq:Bubble2}) for $n=3$.
Every ternary quadric is the second Symanzik polynomial
 for some choice of kinematic parameters. We thus write
 $$ \mathcal{F}_z(x) \,\, = \, \,
 x^{\top} C x \,\,=\,\,
 \sum_{i=1}^3 \sum_{j=1}^3 c_{ij} x_i x_j .
 $$
  Hadeler \cite[Theorem 4]{Hadeler} tells us that the 
  $3 \times 3$ matrix $C = (c_{ij}) $ is copositive if and only if
\begin{equation}
\label{eq:trianglecondition}
 \begin{matrix} c_{11} \geq 0 \,, \,\,c_{22} \geq 0 \,,\, \, c_{33} \geq 0 \,, \,\,
   c_{12} \geq - \sqrt{c_{11} c_{22}}\,,\,\,
      c_{13} \geq - \sqrt{c_{11} c_{33}}\,,\,\,
      c_{23} \geq - \sqrt{c_{22} c_{33}}\,, \smallskip \\
       \hbox{and \quad} \bigl[ \, \,{\rm det}(C) \geq 0\,
   \quad   \hbox{or} \quad
            c_{12} \sqrt{ c_{33}} +
      c_{13} \sqrt{ c_{22}}  + 
      c_{23} \sqrt{ c_{11}}   + \sqrt{c_{11} c_{22} c_{33}} \,\geq\, 0 \, \bigr]. 
      \end{matrix} 
\end{equation}      
Furthermore, the algebraic boundary of $\mathcal{C}_G$ is
the product of the seven principal minors of~$C$.      
The disjunction in (\ref{eq:trianglecondition})
is analogous to (\ref{eq:disjunction}), which concerns the case $n=2$.
We invite our readers to extend these formulas to $4 \times 4$ matrices,
where $G$ is the {\em box diagram} ($n=4$).
\end{example}

\section{Bananas, Boxes and Beetles}
\label{sec4}

 The {\em banana graph} with $n$ internal edges is the simplest
Feynman graph of arbitrarily high genus $\ell$.
Namely, it has only two vertices connected by $n$ internal edges,
and thus $\ell = n-1$. 
In spite of this simplicity,  Feynman integrals
of bananas  lead to
deep mathematical structures.
In \cite[Section 14.5.3]{Weinzierl},
one starts from mixed Hodge structures and derives {\em banana motives}.

The second Symanzik polynomial of the banana graph $G$ with $n$ internal edges
is the following homogeneous polynomial of degree $n$ in $n$ variables $x_i$
with parameters $z = (m_i,s)$:
$$ \mathcal{F}_z (x) \,= \,
x_1 x_2 \cdots x_n \cdot \bigl( f(x) \cdot g(x) \, - \, s \bigr) , $$
where the factors in the parenthesis are the following linear form and reciprocal linear form:
$$f(x) \,= \,\sum_{i=1}^n m_i x_i \qquad {\rm  and}  \qquad g(x) \,=\, \sum_{j=1}^n \frac{1}{x_j}. $$
The parameters $\,m_1,\ldots,m_n\,$ and $\,s\,$ represent the
masses and momenta of the particles in a scattering process.
The hypersurface defined by $\mathcal{F}_z(x)=0$  in the projective space $\PP^{n-1}$ is
a Calabi-Yau variety. For instance, for $n=3$ this hypersurface
 is an elliptic curve in $\PP^2$. This curve depends on four parameters
$s,m_1,m_2,m_3$, and its discriminant is the quartic in~(\ref{eq:bananadiscriminant}).
For $n=4$, the variety  $\{ x \in \PP^3: \mathcal{F}_z(x) = 0\}$ is a K3 surface.
The Feynman integrals for bananas are period
integrals  \cite[Section 10.3]{Weinzierl} on  Calabi-Yau varieties. 
A Calabi-Yau variety of dimension $0$ consists of two points, as seen for $n=2$ in Example \ref{ex:bubble}.
We here prove:

\begin{theorem} \label{thm:banana}
The copositive cone for the banana graph equals
$$ \mathcal{C}_G \,\, = \,\,
\bigl\{ \,(m,s) \in \RR^{n+1} \,:\, m_1,\ldots,m_n \geq 0 \,\,\,\,{\rm and} \,\,\,
 \sum_{i=1}^n m_i \, + \, 2 \cdot \sum_{i < j}  \sqrt{ m_i} \sqrt{m_j } \, \,\geq \,\, s \,\bigr\}.
$$
It follows that
the algebraic boundary of $\,\mathcal{C}_G$ is the union of
the $n$ coordinate hyperplanes $\{m_i=0\}$ with an irreducible
hypersurface of degree $2^{n-1}$.
\end{theorem}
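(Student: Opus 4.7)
The plan rests on the factorization
\[
\mathcal{F}_z(x) \,=\, x_1 \cdots x_n \cdot \bigl( f(x)\, g(x) - s \bigr),
\]
a polynomial identity on the positive orthant. Since $x_1 \cdots x_n > 0$ on $\RR^n_{>0}$, copositivity in the interior is equivalent to $s \leq \inf_{x > 0} f(x) g(x)$. I would compute this infimum by the Cauchy--Schwarz inequality with $a_i = \sqrt{m_i x_i}$ and $b_i = 1/\sqrt{x_i}$ (which forces $m_i \geq 0$):
\[
f(x)\, g(x) \,=\, \Bigl(\textstyle\sum_i a_i^2\Bigr)\Bigl(\textstyle\sum_i b_i^2\Bigr) \,\geq\, \Bigl(\textstyle\sum_i a_i b_i\Bigr)^{\!2} \,=\, \Bigl(\textstyle\sum_i \sqrt{m_i}\Bigr)^{\!2},
\]
with equality attained at $x_i = 1/\sqrt{m_i}$ (or approached in a limit if some $m_i$ vanish). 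So the interior condition is exactly $s \leq (\sum_i \sqrt{m_i})^2$.

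For the boundary of $\RR^n_{\geq 0}$, I would evaluate $\mathcal{F}_z$ on a facet $\{x_i = 0\}$. Using the identity $x_1 \cdots x_n \cdot g(x) = \sum_j \prod_{k \neq j} x_k$, only the term $j = i$ survives, giving
\[
\mathcal{F}_z(x)\big|_{x_i = 0} \,=\, \Bigl( \sum_{k \neq i} m_k x_k\Bigr) \prod_{l \neq i} x_l,
\]
which is nonneg on $\RR^{n-1}_{\geq 0}$ iff $m_k \geq 0$ for every $k \neq i$. Ranging over $i$ yields $m_i \geq 0$ for all $i$. Combined with the interior analysis, this establishes the stated description of $\mathcal{C}_G$.

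For the algebraic boundary, the facets $\{m_i = 0\}$ contribute $n$ coordinate hyperplanes. The remaining facet $\{s = (\sum_i \sqrt{m_i})^2\}$ is cut out by the polynomial obtained by rationalizing the radicals via the Galois orbit of sign flips $\sqrt{m_i} \mapsto \pm \sqrt{m_i}$:
\[
P(s, m) \,:=\!\!\!\!\!\!\prod_{\epsilon \in \{\pm 1\}^n/\{\pm 1\}}\!\!\!\!\!\! \Bigl( s - \Bigl(\textstyle\sum_i \epsilon_i \sqrt{m_i}\Bigr)^{\!2}\Bigr),
\]
where the identification $\epsilon \sim -\epsilon$ is justified because $\epsilon$ and $-\epsilon$ yield identical factors. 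Each factor is linear in $s$ and of total degree $1$ in $(s,m)$, so $P$ is a polynomial of total degree $2^{n-1}$ with coefficients in $\ZZ[m_1,\ldots,m_n,s]$.

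The step I expect to require the most care is the irreducibility of $P$. The plan is to verify that the Galois group $(\ZZ/2)^n/\{\pm 1\}$ of the extension $\QQ(m_1, \ldots, m_n)(\sqrt{m_1}, \ldots, \sqrt{m_n})/\QQ(m_1, \ldots, m_n)$ acts transitively on the $2^{n-1}$ factors: the stabilizer of any factor is precisely the diagonal $\{\pm 1\}$, so the orbit has the full size $2^{n-1}$. Transitivity of the Galois action on the root set precludes any proper Galois-invariant subproduct, so $P$ is irreducible in $\QQ(m)[s]$, and hence in $\QQ[s, m_1, \ldots, m_n]$ by Gauss's lemma.
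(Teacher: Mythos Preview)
Your argument is correct and follows the same overall architecture as the paper: reduce copositivity to the inequality $s \le \inf_{x>0} f(x)g(x)$, identify the infimum as $(\sum_i \sqrt{m_i})^2$, and then rationalize the radicals to obtain the algebraic boundary. The main difference is in how you compute the infimum. The paper proves a separate lemma by setting $\nabla(fg)=0$, solving for the critical point $x_i^* = \lambda/\sqrt{m_i}$, and substituting back; you reach the same value in one line via Cauchy--Schwarz, which is cleaner and avoids the calculus. Your explicit facet analysis $\mathcal{F}_z|_{x_i=0} = (\sum_{k\neq i} m_k x_k)\prod_{l\neq i} x_l$ is also more careful than the paper, which essentially asserts $m\in\RR^n_{\ge 0}$ without justification. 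On the irreducibility, the paper only writes ``using elementary Galois theory, one can show\ldots''; you actually sketch the transitivity argument.

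One small correction: the Galois group of $\QQ(m_1,\ldots,m_n)(\sqrt{m_1},\ldots,\sqrt{m_n})$ over $\QQ(m_1,\ldots,m_n)$ is $(\ZZ/2)^n$, not $(\ZZ/2)^n/\{\pm 1\}$. What you want to say is that this group acts on the $2^{n-1}$ factors of $P$ with kernel the diagonal $\{\pm(1,\ldots,1)\}$, so the induced action of the quotient $(\ZZ/2)^n/\{\pm 1\}$ is simply transitive. Your conclusion (transitivity $\Rightarrow$ no proper Galois-stable subproduct $\Rightarrow$ irreducible in $\QQ(m)[s]$, then Gauss since $P$ is monic in $s$) is fine once this is phrased correctly.
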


To prove this result, we consider the polynomial optimization problem
\begin{equation}
\label{eq:bananaopt}
 {\rm Minimize} \quad f(x) \cdot g(x) \quad \hbox{subject to} 
\,\,\, x \in \RR^n_{> 0} . \end{equation}
Since the Laurent polynomial $f(x) g(x)$ is homogeneous of degree zero,
we can replace the orthant $\RR^n_{>0}$
with the open probability simplex ${\rm int}(\Delta)$ in the
optimization problem (\ref{eq:bananaopt}).
The optimal value $v^* = v^*(m_1,\ldots,m_n)$ is an algebraic function of
the masses $m_1,\ldots,m_n$.  Note that $v^*(m)$ is positive
whenever the coordinates of $m = (m_1,\ldots,m_n)$ are positive.
We have:

\begin{lemma}
\label{Lemma:OptimalValue}
The optimal value in (\ref{eq:bananaopt}) is given by the following expression in the masses:
$$ v^*(m) \quad = \quad \sum_{i=1}^n m_i \, + \, 2 \cdot \sum_{i < j}  \sqrt{ m_i m_j }$$
\end{lemma}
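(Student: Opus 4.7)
The plan is to recognize the target quantity as a perfect square and then invoke Cauchy--Schwarz. Observe that
$$\sum_{i=1}^n m_i \,+\, 2\sum_{i<j}\sqrt{m_i m_j} \,\,=\,\, \Big(\sum_{i=1}^n \sqrt{m_i}\,\Big)^{\!2},$$
so the assertion is equivalent to showing that $\inf_{x \in \RR^n_{>0}} f(x)g(x) = (\sum_i \sqrt{m_i})^2$.

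For the lower bound, I would apply the Cauchy--Schwarz inequality to the vectors with entries $a_i := \sqrt{m_i x_i}$ and $b_i := 1/\sqrt{x_i}$. Since $a_i^2 = m_i x_i$, $b_i^2 = 1/x_i$, and $a_i b_i = \sqrt{m_i}$, this gives
$$f(x) \cdot g(x) \,=\, \Big(\sum_{i=1}^n a_i^2\Big)\Big(\sum_{i=1}^n b_i^2\Big) \,\geq\, \Big(\sum_{i=1}^n a_i b_i\Big)^{\!2} \,=\, \Big(\sum_{i=1}^n \sqrt{m_i}\,\Big)^{\!2}.$$

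For the matching upper bound when all masses are positive, I would exhibit an explicit minimizer. Cauchy--Schwarz attains equality precisely when $a_i/b_i = \sqrt{m_i}\, x_i$ is independent of $i$, i.e.\ when $x_i = c/\sqrt{m_i}$ for some $c>0$. Plugging in $x_i = 1/\sqrt{m_i}$ yields $f(x) = g(x) = \sum_i \sqrt{m_i}$, so the bound is tight. By homogeneity of $fg$, any positive rescaling lies in $\mathrm{int}(\Delta)$ and realizes the same value.

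The only subtlety is the boundary case where some $m_i$ vanish, in which the infimum may not be attained in $\RR^n_{>0}$. I would handle this by continuity: choose a sequence of positive masses converging to $m$ and pass to the limit of the above optimizers, or equivalently send the corresponding $x_i \to \infty$ and reduce to the sub-banana in which those masses are deleted. Either argument shows that $v^*(m)$ equals the claimed algebraic expression on all of $\RR^n_{\geq 0}$. The bulk of the proof is thus a one-line Cauchy--Schwarz application; the only thing that requires any care is tracking whether the infimum is attained on the boundary of the mass cone.
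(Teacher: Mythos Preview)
Your proof is correct and takes a genuinely different route from the paper. The paper argues via calculus: it notes that $f(x)g(x)$ is positive on ${\rm int}(\Delta)$ and blows up on the boundary, so the minimum is attained at an interior critical point; setting the gradient to zero yields $x_i^* = \lambda/\sqrt{m_i}$, and substitution gives $v^*(m) = (\sum_i \sqrt{m_i})^2$. Your approach replaces this with a one-line Cauchy--Schwarz inequality, which delivers the sharp lower bound directly and identifies the minimizer through its equality case, without any appeal to compactness or first-order conditions. The Cauchy--Schwarz argument is more elementary and also more robust: the lower bound holds verbatim when some $m_i$ vanish, and your limiting argument for attainment in that case is a genuine addition, since the paper's gradient computation implicitly assumes all $m_i>0$ (it divides by $\sqrt{m_i}$). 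Both proofs land on the same optimizer, but yours gets there with less machinery.
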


\begin{proof}
The objective function $f(x) g(x)$ is positive on the open simplex
${\rm int}(\Delta)$ and it tends to infinity on the boundary. Hence the
gradient of $f(x) g(x)$ vanishes at the optimal point $x^*$. 
By the product rule from calculus, the gradient is the following row vector of length $n$:
$$ \begin{matrix} \nabla_x (f(x) g(x)) & = &
 g(x) \cdot \nabla_x(f(x)) \,+\, f(x)\cdot \nabla_x(g(x)) \smallskip \\ & = & 
g(x) \cdot \begin{bmatrix} m_1 & m_2 & \cdots & m_n \end{bmatrix}
\,-\, f(x) \cdot \begin{bmatrix} \frac{1}{x_1^2} & \frac{1}{x_2^2} & \cdots & \frac{1}{x_n^2} \end{bmatrix}.
\end{matrix}
$$
At the optimal point $x^*$, we have $g(x^*)m_i = f(x^*)/(x_i^*)^2$.
Setting $\lambda = \sqrt{\frac{f(x^*)}{g(x^*)}}> 0$, we find
$$ x_i^* \, = \, \frac{\lambda}{\sqrt{m_i}}\qquad
\hbox{for} \,\, i=1,2,\ldots,n. $$
Substituting this optimal point into the objective function yields the optimal value:
$$ v^*(m) \,\, = \,\,
f(x^*)\cdot g(x^*) \,\, = \,\, \biggl(\,\sum_{i=1}^n \sqrt{m_i}\, \biggr)^2
$$
This completes the proof of the lemma.
\end{proof}

\begin{proof}[Proof of Theorem \ref{thm:banana}]
By definition, a point $(m,s)$ lies in the copositive cone $\mathcal{C}_G$ if  and only if
the Laurent polynomial
$f(x) \cdot g(x) - s $ is nonnegative on
the open orthant $\RR^{n}_{>0}$. This happens if and only 
if the optimal value $v^*(m)$ in (\ref{eq:bananaopt}) is larger than or equal to $s$,
so $\, \mathcal{C}_G \, = \,
\{ \,(m,s) \in \RR^{n}_{\geq 0} \times \RR \,: \, v^*(m) \geq s \,\}$.
The first assertion now follows from Lemma~\ref{Lemma:OptimalValue}.

For the second assertion we rationalize the equation
$v^*(m) = s $.
To do this, we multiply the expressions
obtained from $v^*(m)-s$ by taking all $2^n$ combinations of positive
and negative square roots $\pm \sqrt{m_i}$. This leads to a polynomial
of degree $2^{n}$, which is the square of a polynomial of degree $2^{n-1}$,
because a global sign flip leaves $v^*(m)-s$ unchanged.
Using elementary Galois theory, one can show that this polynomial
of degree $2^{n-1}$ is irreducible.
\end{proof}

The boundary polynomial of degree $2^{n-1}$ is the
Landau discriminant~\cite{MizeraTelen} for the banana graph.
For example, for the two-loop banana  $(n=3)$, the algebraic boundary of $\mathcal{C}_G$ equals
$$ \prod_{i_1=0}^1 \prod_{i_2=0}^1\, \biggl[ \,s+m_1+m_2+m_3 + 2(-1)^{i_1+i_2} \sqrt{m_1} \sqrt{m_2} 
+ 2(-1)^{i_1} \sqrt{m_1} \sqrt{m_3} 
+ 2(-1)^{i_2} \sqrt{m_2} \sqrt{m_3} \,\biggr]. $$
This Landau discriminant is the quartic polynomial
\begin{equation}
\label{eq:bananadiscriminant} \begin{small}
 \begin{matrix} 
s^4\,-\, 4 \,( m_1+ m_2+ m_3) \, s^3\,+\, (6 m_1^2+4 m_1 m_2+4 m_1 m_3+6 m_2^2+4 m_2
 m_3+6 m_3^2) \,s^2 \\- \,4 \,( m_1^3- m_1^2 m_2- m_1^2 m_3- m_1 m_2^2+10 m_1 m_2 m_3- m_1 m_3^2+
  m_2^3- m_2^2 m_3- m_2 m_3^2+m_3^3) \, s  \\ + \,
  (\,m_1^4-4 m_1^3 m_2-4 m_1^3 m_3+6 m_1^2 m_2^2+4 m_1^2 m_2 m_3+6 m_1^2 m_3^2-4 m_1 m_2^3  
+4 m_1 m_2^2 m_3 \\ 
+4 m_1 m_2 m_3^2-4 m_1 m_3^3+m_2^4-4 m_2^3 m_3+6 m_2^2 m_3^2-4 m_2 m_3^3+m_3^4\,).
\end{matrix} \end{small}
\end{equation}

Emboldened by Theorem \ref{thm:banana}, we next consider three
Feynman graphs with $\ell=2$, namely the 
{\em double box}, the {\em nonplanar double box} and the {\em beetle}.
The three graphs are as follows:
\begin{equation}
\label{Eq:BoxBeetle}
\begin{aligned}
\begin{minipage}{0.3\textwidth}
\centering
\begin{tikzpicture}[scale=0.6]
    \node[fill=black, circle, inner sep=1.5pt] (C) at (0, 1) {};
    \node[fill=black, circle, inner sep=1.5pt] (D) at (0, -1) {};
        \node[fill=black, circle, inner sep=1.5pt] (E) at (2, 1) {};
    \node[fill=black, circle, inner sep=1.5pt] (F) at (2, -1) {};
        \node[fill=black, circle, inner sep=1.5pt] (A) at (-2, 1) {};
    \node[fill=black, circle, inner sep=1.5pt] (B) at (-2, -1) {};
    \node[fill=gray, circle, inner sep=1.5pt] (U) at (3, 1.7) {};
    \node[fill=gray, circle, inner sep=1.5pt] (V) at (-3, 1.7) {};
    \node[fill=gray, circle, inner sep=1.5pt] (W) at (3, -1.7) {};
    \node[fill=gray, circle, inner sep=1.5pt] (T) at (-3, -1.7) {};
  
    \draw (A) -- (B);
    \draw (A) -- (C);
    \draw (B) -- (D);
    \draw (D) -- (C);
    \draw (E) -- (F);
    \draw (C) -- (E);
    \draw (D) -- (F);
    \draw (E) -- (U);
    \draw (A) -- (V);
    \draw (F) -- (W);
    \draw (T) -- (B);

    \node[] (m1) at (-2.4, 0) {\footnotesize $m_2$};
    \node[] (m2) at (-1, -1.25) {\footnotesize $m_1$};
    \node[] (m3) at (-1, 1.25) {\footnotesize $m_3$};
    \node[] (m4) at (-0.4, 0) {\footnotesize $m_4$};
    \node[] (m5) at (2.45, 0) {\footnotesize $m_7$};
    \node[] (m6) at (1, -1.25) {\footnotesize $m_6$};
    \node[] (m7) at (1, 1.25) {\footnotesize $m_5$};
    \node[] (P1) at (2.6, 1.8) {\footnotesize $p_3$};
    \node[] (P1) at (-2.6, 1.8) {\footnotesize $p_2$};
    \node[] (P1) at (2.6, -1.8) {\footnotesize $p_4$};
    \node[] (P1) at (-2.6, -1.8) {\footnotesize $p_1$};
     \node[] (N) at (0, -2.5) {\footnotesize $G_1$};
\end{tikzpicture}
\end{minipage}
\begin{minipage}{0.3\textwidth}
\centering
\begin{tikzpicture}[scale=0.6]
    \node[fill=black, circle, inner sep=1.5pt] (C) at (0, 1) {};
    \node[fill=black, circle, inner sep=1.5pt] (D) at (0, -1) {};
        \node[fill=black, circle, inner sep=1.5pt] (E) at (2, 1) {};
    \node[fill=black, circle, inner sep=1.5pt] (F) at (2, -1) {};
        \node[fill=black, circle, inner sep=1.5pt] (A) at (-2, 1) {};
    \node[fill=black, circle, inner sep=1.5pt] (B) at (-2, -1) {};
    \node[fill=gray, circle, inner sep=1.5pt] (U) at (3, 1.7) {};
    \node[fill=gray, circle, inner sep=1.5pt] (V) at (-3, 1.7) {};
    \node[fill=gray, circle, inner sep=1.5pt] (W) at (3, -1.7) {};
    \node[fill=gray, circle, inner sep=1.5pt] (T) at (-3, -1.7) {};

        \node[] (P) at (1, 0) {};
    \node[] (Q) at (1, 0) {};
  
    \draw (A) -- (B);
    \draw (A) -- (C);
    \draw (B) -- (D);
    \draw (C) -- (F);
    \draw (P) -- (D);
    \draw (E) -- (Q);
    \draw (C) -- (E);
    \draw (D) -- (F);
    \draw (E) -- (U);
    \draw (A) -- (V);
    \draw (F) -- (W);
    \draw (T) -- (B);

    \node[] (m4) at (0.2, 0.25) {\footnotesize $m_4$};
    \node[] (m5) at (1.85, 0.25) {\footnotesize $m_7$};
    \node[] (m1) at (-2.4, 0) {\footnotesize $m_2$};
    \node[] (m2) at (-1, -1.25) {\footnotesize $m_1$};
    \node[] (m3) at (-1, 1.25) {\footnotesize $m_3$};
    \node[] (m6) at (1, -1.25) {\footnotesize $m_6$};
    \node[] (m7) at (1, 1.25) {\footnotesize $m_5$};
    \node[] (P1) at (2.6, 1.8) {\footnotesize $p_3$};
    \node[] (P1) at (-2.6, 1.8) {\footnotesize $p_2$};
    \node[] (P1) at (2.6, -1.8) {\footnotesize $p_4$};
    \node[] (P1) at (-2.6, -1.8) {\footnotesize $p_1$};
     \node[] (N) at (0, -2.5) {\footnotesize $G_2$};
\end{tikzpicture}
\end{minipage}
\begin{minipage}{0.3\textwidth}
\centering
\begin{tikzpicture}[scale=0.6]
    \node[fill=black, circle, inner sep=1.5pt] (C) at (2, 0) {};
    \node[fill=black, circle, inner sep=1.5pt] (D) at (0, -1) {};
        \node[fill=black, circle, inner sep=1.5pt] (E) at (2, 1) {};
    \node[fill=black, circle, inner sep=1.5pt] (F) at (2, -1) {};
        \node[fill=black, circle, inner sep=1.5pt] (A) at (-2, 1) {};
    \node[fill=black, circle, inner sep=1.5pt] (B) at (-2, -1) {};
    \node[fill=gray, circle, inner sep=1.5pt] (U) at (3, 1.7) {};
    \node[fill=gray, circle, inner sep=1.5pt] (V) at (-3, 1.7) {};
    \node[fill=gray, circle, inner sep=1.5pt] (W) at (3, -1.7) {};
    \node[fill=gray, circle, inner sep=1.5pt] (T) at (-3, -1.7) {};
  
    \draw (A) -- (B);
    \draw (A) -- (E);
    \draw (B) -- (D);
    \draw (D) -- (C);
    \draw (E) -- (F);
    \draw (C) -- (E);
    \draw (D) -- (F);
    \draw (E) -- (U);
    \draw (A) -- (V);
    \draw (F) -- (W);
    \draw (T) -- (B);
 
      \node[] (m1) at (-2.4, 0) {\footnotesize $m_2$};
    \node[] (m2) at (-1, -1.25) {\footnotesize $m_1$};
    \node[] (m3) at (0, 1.25) {\footnotesize $m_3$};
    \node[] (m4) at (0.6, -0.2) {\footnotesize $m_4$};
    \node[] (m5) at (2.45, 0.5) {\footnotesize $m_7$};
    \node[] (m6) at (1, -1.25) {\footnotesize $m_6$};
    \node[] (m7) at (2.45, -0.5) {\footnotesize $m_5$};
    \node[] (P1) at (2.6, 1.8) {\footnotesize $p_3$};
    \node[] (P1) at (-2.6, 1.8) {\footnotesize $p_2$};
    \node[] (P1) at (2.6, -1.8) {\footnotesize $p_4$};
    \node[] (P1) at (-2.6, -1.8) {\footnotesize $p_1$};
     \node[] (N) at (0, -2.5) {\footnotesize $G_3$};
\end{tikzpicture}
\end{minipage}
\end{aligned}
\end{equation}

Convergence of these Feynman integrals in the massless case was studied recently in
parts B, C and D of
\cite[Section IV]{CKN}. 
All three graphs have $\ell = 2$ loops, $N=4$ external edges, and $n=7$ internal edges.
 The graphs have the same parameter space
of dimension $\binom{N}{2} + n = 13$.
Following \cite[Example 1]{MizeraTelen}, we consider $m_1, \dots ,m_7, \, M_1 := k_{11}, \,  M_2 := k_{22}, \, M_3 := k_{33}, \,  M_4 := k_{44}, \, s := M_1+ M_2 + k_{12}, \, t := M_2+ M_3 + k_{23} $ as the basis of the parameter space.

We compare the copositive  cones $\mathcal{C}_{G_1}, \, \mathcal{C}_{G_2}, \, \mathcal{C}_{G_3}$ inside the shared
parameter space~$\RR^{13}$. This uses the labeling in~\eqref{Eq:BoxBeetle}. All three Symanzik polynomials are cubic in $7$ variables. The next table shows the numbers of monomials and the $f$-vector 
for each Newton polytope:
\begin{center}
\begin{tabular}{c|c|c|c}
     & double box $G_1$ & nonplanar double box  $G_2$  & beetle $G_3$  \\
    \hline
    \# monomials & 63 & 66 & 59 \\
    \hline
    $f$-vector & (30,90,121,92,41,10) & (32,96,128,96,42,10) & 
    (28,84,115,90,41,10)  \\
\end{tabular}
\end{center}
Finding a semialgebraic description for the copositive cones of the graphs in~\eqref{Eq:BoxBeetle} 
is difficult. We will not attempt this here.
Instead, we explore how these cones are related to each other.
To simplify the discussion, we assume that the masses of the particles are equal, that is, 
we set $m := m_1 = \dots = m_7\,$ 
and $\,M:=M_1 = \dots = M_4$. Now the copositive cones live in $\mathbb{R}^4$.

Suppose that $Q$ is a face of  $\Newt(\mathcal{F}_z)$, and denote $\mathcal{F}_{z}|_Q$ the polynomial obtained from $\mathcal{F}_{z}$ by collecting the monomials whose exponent vector is contained in $Q$.
If $\mathcal{F}_{z}|_Q$ is not copositive then $\mathcal{F}_{z}$ is not copositive either. Thus, 
we obtain outer approximations of $\mathcal{C}_G$ by intersecting the
copositive cones of $\mathcal{F}_{z}|_Q$ for some faces $Q$.
If $\mathcal{F}_z|_Q$ is maximally sparse (i.e.~all monomials correspond to vertices of $Q$),
 then $\mathcal{F}_z|_Q$ is copositive if and only if all  coefficients are positive. 
  The edges $Q $ for which $\mathcal{F}_z|_Q$ is not maximally sparse correspond to bubble diagrams
  (Example \ref{ex:bubble}).
  From this we obtain the outer approximations
     $\,\mathcal{C}_{G_1} \subseteq \mathcal{E}_1$, $\mathcal{C}_{G_2} \subseteq \mathcal{E}_1$
     and $\mathcal{C}_{G_3} \subseteq \mathcal{E}_2$ with the cones
$$  \mathcal{E}_1 \,:=\, \{4m - M \geq 0 \} \,\cap \,  \{  4m - s \geq 0 \}
\quad {\rm and} \quad    \mathcal{E}_2 \,:= \,\mathcal{E}_1 \cap  \{4m - t \geq 0 \}.
$$

For each two-dimensional face $Q$, if $\mathcal{F}_{z}|_Q$ is not maximally sparse, then it either equals
\begin{equation}
\label{Eq:TwoQuadraticEdge}
(x_k+ x_u)\big(m_ix_i^2+ (m_i+m_j-p)x_ix_j +m_jx_j^2\big) 
\,\,\, \hbox{ \begin{footnotesize} for some $\, i,j,k,u \in [7],  p \in \{M_1,\dots,M_4,s,t\} $, \end{footnotesize}}
\end{equation}
 or it is the second Symanzik polynomial for one of the following three graphs:
\begin{equation}
\label{Eq:ThreeSmallGraphs}
\begin{aligned}
\begin{minipage}{0.3\textwidth}
\centering
\begin{tikzpicture}[scale=0.3]
    \node[fill=black, circle, inner sep=1.5pt]
    (A) at (2.5, 0) {};
     \node[fill=black, circle, inner sep=1.5pt] (B) at (-2.5, 0) {};
     \draw[bend left=70] (A) to (B) node[midway, above=-45 pt] {};
     \draw[bend right=70] (A) to (B) node[midway, above=45 pt] {};
     \draw (A) to (B);
\end{tikzpicture}
\end{minipage}
\begin{minipage}{0.3\textwidth}
\centering
\begin{tikzpicture}[scale=0.3]
    \node[fill=black, circle, inner sep=1.5pt]
    (A) at (-1, 0) {};
     \node[fill=black, circle, inner sep=1.5pt] (B) at (2.5, 2.2) {};
     \node[fill=black, circle, inner sep=1.5pt] (C) at (2.5, -2.2) {};
     \draw (A) to (B);
      \draw (A) to (C);
       \draw (C) to (B);
\end{tikzpicture}
\end{minipage}
\begin{minipage}{0.3\textwidth}
\centering
\begin{tikzpicture}[scale=0.3]
    \node[fill=black, circle, inner sep=1.5pt]
    (A) at (2.5, 0) {};
     \node[fill=black, circle, inner sep=1.5pt] (B) at (-2.5, 0) {};
     \draw[bend left=70] (A) to (B) node[midway, above=-45 pt] {};
     \draw[bend right=70] (A) to (B) node[midway, above=45 pt] {};
      \draw[-] (A) edge[out=30, in=-30, loop,looseness= 50] (A);
\end{tikzpicture}
\end{minipage}
\end{aligned}
\end{equation}
In each case, we derive outer approximations from 
 Theorem~\ref{thm:banana} or Example~\ref{Example:Triangle}.
For $G_1$, the $2$-faces $Q$ corresponding to banana and triangle diagrams give that $\mathcal{C}_{G_1}$ is contained~in
\begin{align*}
\label{Eq:TwoDimFaceBox}
\! & \! \mathcal{D}_1 \,:=\,  \{ 9m - s \geq 0 \} \,\cap \,\{ 9m - M \geq 0 \} \,\cap \,\{ 9m - t \geq 0 \} \,\,\cap \\ 
& \qquad\quad \{ -8Mm^2 + 2M^2m - 2M^2s - ms^2 - 4m^2s + 8Mms + 5m^3 \geq 0  \text{ or } 7m - 2M  - s  \geq 0 \}.
\end{align*}
Similarly, for the non-planar double box $G_2$, we show that the copositive cone is a subset of
\begin{equation}
\label{Eq:TwoDimFaceNonplanarBox}
\begin{aligned}
 \mathcal{D}_2 \,\,:=\,\, \mathcal{D}_1\, \cap\, \{ 9m - 4M + s + t \geq 0 \}.
\end{aligned}
\end{equation}
Using the approximations $\mathcal{E}_1, \, \mathcal{E}_2, \, \mathcal{D}_1, \,\mathcal{D}_2$, we
now compare the three copositive cones for~\eqref{Eq:BoxBeetle}.

\begin{example} \label{ex:ztildez}
We consider parameter vectors $z = (m,M,s,t)$.
 The vector $z = (1,\tfrac{1}{4},-6,-6) $ lies in $ \mathcal{C}_{G_1}\cap \mathcal{C}_{G_3}$
 because the two Symanzik polynomials have positive coefficients. But
 $z$ is not in  $\mathcal{C}_{G_2}$ because it is not in~$\mathcal{D}_2$.
The vector $ \tilde z =   (5,4,-8,22)$ is not in $\mathcal{E}_2$ and hence not in $\mathcal{C}_{G_3}$.
 But $\tilde z \in \mathcal{C}_{G_1} \cap \mathcal{C}_{G_2}$.
  We compute a certificate for this containment in Example~\ref{Ex:BoxesBeetlesManifest}.
        \end{example}

Our discussion underscores the importance of having an explicit description of copositive cones 
for small graphs, like the triangle and  bananas.  These motifs serve as building blocks, 
allowing us to compare larger copositive cones whose full descriptions may be out of reach.

\section{Polynomials With Fixed Support}
\label{sec5}

This section introduces copositive geometry in the general
framework of sparse polynomials developed by
 Gel'fand, Kapranov and Zelevinsky in \cite{GKZ}.
 We show that the copositive cone is bounded by the principal
   ${\mathcal A}$-determinant \cite[Chapter 10]{GKZ}.
 Thereafter, we specialize to the Feynman scenario, where
 the sparse polynomial is $\mathcal{F}_z$, and
 $\mathcal{A}$ comes from  Lemma \ref{lem:NimaSebastian}.

Let $\mathcal{A}$ be any finite subset of $\NN^n$ such that
each element ${\bf a} \in \mathcal{A}$ has the same coordinate sum.
We also assume
that the polytope $P = {\rm conv}(\mathcal{A})$ has dimension $n-1$.
We allow the possibility that $\mathcal{A}$ is a proper subset of
$P \cap \ZZ^n$. Let $\RR[\mathcal{A}]$ be the
real vector space of polynomials in $x = (x_1,\ldots,x_n)$ that
are supported on $\mathcal{A}$. Such a polynomial has the form
\begin{equation}
\label{eq:suchpoly}
  f(x) \,= \, \sum_{{\bf a} \in \mathcal{A}} c_{\bf a} \,x^{\bf a},
\qquad {\rm where} \,\,\,c_{\bf a} \in \RR. \end{equation}
We can regard $f(x)$ as a linear function on the affine
toric variety  $X_\mathcal{A} \subset \RR[\mathcal{A}]$. This toric
variety is a cone, and we identify it with the corresponding
projective toric variety.
We write $X_{\mathcal{A},\geq 0}$ for the closed
semialgebraic subset of all points with nonnegative coordinates.

The {\em copositive cone} is the set of polynomials that are nonnegative
on nonnegative points:
\begin{equation}
\label{eq:CAdef}
 \mathcal{C}_\mathcal{A} \,\, = \,\,\bigl\{
f \in \RR[\mathcal{A}] \,: \, f(u) \geq 0 \quad \hbox{for all} \,\, u \in \RR^n_{\geq 0} \bigr\} \,\,
= \,\, \bigl\{f \in \RR[\mathcal{A}] \,: \, f \geq 0 \,\,{\rm on} \,\, X_{\mathcal{A},\geq 0} \bigr\}. 
\end{equation}
This is a full-dimensional closed convex cone in $\RR[\mathcal{A}]$.
Polynomials in $\mathcal{C}_\mathcal{A}$ are called {\em copositive}.

\begin{lemma} \label{eq:onboundary}
Let $f$ be a polynomial in the copositive cone $\mathcal{C}_\mathcal{A}$.
Then $f$ lies on the boundary of  $\,\mathcal{C}_\mathcal{A}$
if and only if $\,f$ has a zero in the nonnegative part 
$\,X_{\mathcal{A},\geq 0}\, $ of the toric variety $\,X_\mathcal{A}$.
\end{lemma}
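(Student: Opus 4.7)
The plan is to recognize $\mathcal{C}_\mathcal{A}$ as the dual of $X_{\mathcal{A},\geq 0}$ under the natural pairing, and then apply the standard fact that a point of a full-dimensional closed convex cone lies on its boundary iff it annihilates some nonzero element of the dual cone. Concretely, I identify each $f = \sum c_{\bf a} x^{\bf a}$ with the coefficient vector $(c_{\bf a}) \in \RR^{|\mathcal{A}|}$, and each $u \in \RR^n_{\geq 0}$ with its monomial image $\phi(u) := (u^{\bf a})_{{\bf a} \in \mathcal{A}} \in X_{\mathcal{A},\geq 0}$. Then evaluation becomes the standard inner product $f(u) = \langle f, \phi(u)\rangle$, and the definition (\ref{eq:CAdef}) exhibits $\mathcal{C}_\mathcal{A}$ as the dual cone of $X_{\mathcal{A},\geq 0}$. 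Throughout I take a ``zero on $X_{\mathcal{A},\geq 0}$'' to mean a nonzero vector $\phi(u) \in \RR^{|\mathcal{A}|}$ on which $f$ vanishes, since the paper identifies $X_\mathcal{A}$ with the projective toric variety.

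For the $(\Leftarrow)$ direction I argue by explicit perturbation. If $f \in \mathcal{C}_\mathcal{A}$ vanishes at some nonzero $\phi(u) \in X_{\mathcal{A},\geq 0}$, then at least one coordinate $u^{{\bf a}_0}$ is strictly positive, so for every $\epsilon > 0$ the perturbed polynomial $f - \epsilon\, x^{{\bf a}_0}$ takes the strictly negative value $-\epsilon\, u^{{\bf a}_0}$ at $u$. This exhibits $f$ as a limit of polynomials outside $\mathcal{C}_\mathcal{A}$, hence $f \in \partial\mathcal{C}_\mathcal{A}$.

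For the $(\Rightarrow)$ direction I use compactness. Suppose $f$ has no zero on $X_{\mathcal{A},\geq 0}\setminus\{0\}$. The slice $S := X_{\mathcal{A},\geq 0} \cap \{v : \|v\| = 1\}$ is compact, and the continuous map $v \mapsto \langle f, v\rangle$ is strictly positive on $S$, so admits a uniform lower bound $\delta > 0$. By Cauchy--Schwarz, every $g \in \RR[\mathcal{A}]$ with $\|g - f\| < \delta$ satisfies $\langle g, v\rangle > 0$ on $S$; since $X_{\mathcal{A},\geq 0}$ is a cone, this extends to the whole nonnegative toric cone. For any remaining $u \in \RR^n_{\geq 0}$ with $\phi(u) = 0$, one has $g(u) = 0$ tautologically, so $g$ is still copositive. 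Thus the ball of radius $\delta$ around $f$ sits inside $\mathcal{C}_\mathcal{A}$, and $f$ lies in its interior. No genuine obstacle arises; the only subtle point, deserving explicit mention, is the convention that zeros of $f$ on $X_{\mathcal{A},\geq 0}$ correspond to nonzero vectors $\phi(u)$, which is what allows the monomial $x^{{\bf a}_0}$ to serve as a perturbation direction in the first half of the argument.
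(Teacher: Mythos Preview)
Your proof is correct and follows essentially the same approach as the paper's: both rely on the compactness of $X_{\mathcal{A},\geq 0}$ (you slice the affine cone with the unit sphere, the paper views it inside the probability simplex) to pass from ``no zeros'' to ``a whole neighborhood is copositive''. Your version is more explicit---you spell out the perturbation $f - \epsilon\, x^{{\bf a}_0}$ for one direction and the Cauchy--Schwarz $\delta$-ball argument for the other---whereas the paper compresses both directions into a single sentence asserting the equivalence of strict positivity on a compact set with openness of the copositivity condition.
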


\begin{proof}
The nonnegative toric variety $X_{\mathcal{A},\geq 0}$ 
is a compact semialgebraic subset of the simplex
which is formed by the nonnegative part of the
ambient projective space. Every polynomial $f \in \RR[\mathcal{A}]$ 
represents a function on that simplex, and $f$ is copositive
if and only if $f$ is nonnegative on the subset $X_{\mathcal{A},\geq 0}$.
In this case, $f$ is strictly positive on $X_{\mathcal{A},\geq 0}$ if and only if
 some open neighborhood of $f$ is contained in $\mathcal{C}_\mathcal{A}$
 if and only if $f$ is in the interior of $\mathcal{C}_\mathcal{A}$.
\end{proof}

\begin{remark} \label{rmk:dualcone}
The second formula in (\ref{eq:CAdef}) says that
the copositive cone $\mathcal{C}_\mathcal{A}$ is the cone dual to the convex cone 
in $\RR[\mathcal{A}]$ spanned by
the positive toric variety $X_{\mathcal{A},\geq 0}$. In symbols, we have
\begin{equation}
\label{eq:dualcone} \mathcal{C}_\mathcal{A} \,\, = \,\,
{\rm conv}( X_{\mathcal{A},\geq 0})^\vee. 
\end{equation}
\end{remark}

For every face $Q$ of $P$ we write $f|_Q$ for the subsum of all
monomials $c_{\bf a} x^{\bf a}$ where ${\bf a} \in Q$.

\begin{lemma}
\label{Lemma:InteriorCoposCone}
The interior of the copositive cone $\mathcal{C}_\mathcal{A}$ consists of all polynomials
$f \in \RR[\mathcal{A}]$ such that, for each face $Q$ of $P$, the inequality 
$f|_Q(u) > 0$ holds for all  $u $ in the open orthant~$\RR_{> 0}^n$.
\end{lemma}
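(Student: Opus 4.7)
The plan is to combine Lemma \ref{eq:onboundary} with the standard face stratification of the nonnegative toric variety. By Lemma \ref{eq:onboundary}, a polynomial $f \in \mathcal{C}_\mathcal{A}$ lies in the interior precisely when it has no zero on $X_{\mathcal{A},\geq 0}$; together with $f \geq 0$ on $X_{\mathcal{A},\geq 0}$, which comes for free from copositivity, this upgrades to $f > 0$ everywhere on $X_{\mathcal{A},\geq 0}$. The lemma will therefore follow from the pointwise identification that, as $Q$ ranges over the faces of $P = \conv(\mathcal{A})$ and $u$ ranges over $\RR_{>0}^n$, the numbers $f|_Q(u)$ are exactly the values of $f$ on the points of $X_{\mathcal{A},\geq 0}$.

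To set this up, I would invoke the orbit decomposition
\[
X_{\mathcal{A},\geq 0} \,\,=\,\, \bigsqcup_{Q\, \preceq\, P} X_{\mathcal{A} \cap Q,\,>0},
\]
a standard consequence of the orbit-cone correspondence for projective toric varieties. Each stratum $X_{\mathcal{A}\cap Q,\,>0}$ is parametrized by sending $u \in \RR_{>0}^n$ to the point with homogeneous coordinates $u^{\bf a}$ for ${\bf a} \in \mathcal{A}\cap Q$ and $0$ for ${\bf a} \in \mathcal{A}\setminus Q$. Viewing $f = \sum_{{\bf a} \in \mathcal{A}} c_{\bf a} x^{\bf a}$ as the corresponding linear functional on the ambient space of $\RR[\mathcal{A}]$, its value at such a point is $\sum_{{\bf a}\in Q} c_{\bf a} u^{\bf a} = f|_Q(u)$. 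This gives the required identification between the values of $f$ on $X_{\mathcal{A},\geq 0}$ and the collection of face restrictions $\{f|_Q(u) : Q \preceq P,\, u \in \RR_{>0}^n\}$.

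Given the identification, both directions of the lemma are immediate. If $f \in \mathrm{int}(\mathcal{C}_\mathcal{A})$, strict positivity on each positive stratum gives $f|_Q(u) > 0$ for every face $Q$ and every $u \in \RR_{>0}^n$. Conversely, suppose $f|_Q(u) > 0$ for all faces $Q$ and all $u \in \RR_{>0}^n$. Taking $Q = P$ gives $f > 0$ on $\RR_{>0}^n$, and continuity extends this to $f \geq 0$ on $\RR_{\geq 0}^n$, so $f \in \mathcal{C}_\mathcal{A}$. The identification then shows $f$ has no zero on $X_{\mathcal{A},\geq 0}$, so $f \in \mathrm{int}(\mathcal{C}_\mathcal{A})$ by Lemma \ref{eq:onboundary}. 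The only genuine technical ingredient is the face stratification of $X_{\mathcal{A},\geq 0}$ together with the monomial parametrization of each stratum; once this standard toric geometry input is in place, the rest of the proof is essentially a dictionary translation, so I do not expect a real obstacle beyond citing the right toric fact.
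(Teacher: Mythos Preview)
Your proposal is correct and follows essentially the same route as the paper: both arguments rest on the orbit (face) stratification of $X_{\mathcal{A},\geq 0}$ and the identification of $f$ evaluated on the stratum for $Q$ with $f|_Q(u)$. The only difference is cosmetic—you invoke Lemma~\ref{eq:onboundary} explicitly and spell out the converse direction via continuity, whereas the paper compresses this into a single sentence.
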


\begin{proof}
We consider the decomposition of $X_{\mathcal{A}}$ into
torus orbits. There is one orbit for each face $Q$ of $P$,
and this orbit is parametrized by monomials
$x^{\bf a}$ where ${\bf a} $ ranges over $\mathcal{A} \cap Q$.
The nonnegative toric variety $X_{\mathcal{A},\geq 0}$ is the
disjoint union of the sets of strictly positive points in each orbit.
The interior of $\mathcal{C}_\mathcal{A}$ thus consists of polynomials
$f$ that are strictly positive on each such positive torus orbit, and this
is precisely the stated conjunction over all $Q$.
\end{proof}

\begin{theorem} \label{thm:CA}
The boundary of $\mathcal{C}_\mathcal{A}$ is contained in the 
hypersurface  defined by the principal $\mathcal{A}$-determinant $E_\mathcal{A}$, 
which is the product of the discriminants $\Delta_{\mathcal{A} \cap Q}$
over all faces $Q$ of $P$.
\end{theorem}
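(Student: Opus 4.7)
The plan is to use the orbit decomposition provided by Lemma \ref{Lemma:InteriorCoposCone} to reduce boundary analysis to a first-order optimality condition on a single face, which then matches the defining condition of the associated face discriminant. Fix $f \in \partial \mathcal{C}_\mathcal{A}$. Since $f$ is not in the interior, the contrapositive of Lemma \ref{Lemma:InteriorCoposCone} supplies a face $Q$ of $P$ and a point $u \in \RR^n_{>0}$ at which the face restriction $f|_Q$ fails to be strictly positive.

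First I would verify that $f|_Q$ is itself copositive. Let $w \in \RR^n$ be a linear functional whose maximum over $P$ is attained precisely on $Q$, and set $M := \max_{\mathbf{a} \in \mathcal{A}} \langle w, \mathbf{a}\rangle$. The rescaled polynomial $t^{-M}\,f(t^{w_1} x_1, \ldots, t^{w_n} x_n)$ converges pointwise on $\RR^n_{>0}$ to $f|_Q(x)$ as $t \to \infty$, and each rescaling is nonnegative on $\RR^n_{>0}$ because $f$ is copositive. Hence the limit $f|_Q$ is nonnegative on $\RR^n_{>0}$. Combined with the failure of strict positivity at $u$, this produces a point $u^* \in \RR^n_{>0}$ with $f|_Q(u^*) = 0$, which must then be a global minimizer of $f|_Q$ on the open orthant.

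Since $u^*$ is an interior minimizer, first-order optimality forces $\nabla f|_Q(u^*) = 0$. Thus $u^* \in (\CC^\ast)^n$ is a singular point of the hypersurface $\{f|_Q = 0\}$ inside the torus. By the definition of the $(\mathcal{A}\cap Q)$-discriminant as the Zariski closure of the locus of polynomials with a torus-singular zero, $f|_Q$ belongs to that discriminant variety. When the variety is a hypersurface, this gives $\Delta_{\mathcal{A}\cap Q}(f) = 0$, and hence $E_\mathcal{A}(f) = 0$ by the product formula.

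The main obstacle I anticipate is the \emph{defective} case, in which the $(\mathcal{A}\cap Q)$-dual variety has codimension at least two, so that GKZ's convention sets $\Delta_{\mathcal{A}\cap Q} = 1$ and this face contributes no factor to $E_\mathcal{A}$. To resolve it, I would observe that the locus of $f \in \RR[\mathcal{A}]$ whose face restriction $f|_Q$ has a torus-critical zero then has codimension at least two in $\RR[\mathcal{A}]$. Since $\partial \mathcal{C}_\mathcal{A}$ is a semialgebraic set of pure codimension one, every irreducible component of its Zariski closure must arise from a non-defective face and is therefore cut out by a factor of $E_\mathcal{A}$. Passing to closures of these codimension-one strata then shows that the entire boundary lies in the hypersurface $V(E_\mathcal{A})$, as claimed.
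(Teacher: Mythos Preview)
Your argument is correct and follows the same strategy as the paper's proof: identify a face $Q$ on which $f|_Q$ has a positive zero, recognize this zero as an interior minimizer so the gradient vanishes, and conclude that the corresponding face discriminant vanishes. The paper reaches $f|_Q(u)=0$ slightly more directly via Lemma~\ref{eq:onboundary} (a zero on $X_{\mathcal{A},\geq 0}$ lies in some positive torus orbit, which immediately gives the face $Q$ and the equality), whereas you go through Lemma~\ref{Lemma:InteriorCoposCone} and then supply a scaling-limit argument to verify that $f|_Q$ is itself nonnegative on the orthant; both routes land at the same critical-point conclusion. Your explicit handling of the defective case---using that $\partial\mathcal{C}_\mathcal{A}$ is semialgebraic of pure codimension one, so each irreducible component of its Zariski closure must sit inside a non-defective $\nabla_{\mathcal{A}\cap Q}$---is a point of rigor that the paper's proof leaves implicit.
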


\begin{proof}
Let $f \in \RR[\mathcal{A}]$ and suppose that $f$ is in the boundary of $\mathcal{C}_\mathcal{A}$.
By Lemma~\ref{eq:onboundary}, there exists a face
   $Q$ of the Newton polytope $P$ such that $f|_Q(u)=0$ for some 
   $u \in \mathbb{R}^n_{>0}$.
  Since $f$ is copositive, $u$ is a local minimum of $f|_Q$.
  This  implies that all $n$ partial derivatives of $f|_Q$ also vanish at the point $u$. 
  Therefore, $f$ lies on the hypersurface defined by $\Delta_{\mathcal{A} \cap Q}$.
\end{proof}

Theorem \ref{thm:CA} says that
the algebraic boundary of $\mathcal{C}_\mathcal{A}$ is a hypersurface contained in
the hypersurface defined by 
the principal $\mathcal{A}$-determinant $E_\mathcal{A}$.
Generally, they are not equal.

\begin{example}
    The algebraic boundary of $\mathcal{C}_\mathcal{A}$ can be strictly contained in the principal $\mathcal{A}$-determinant. This is the case for the square
   $\mathcal{A} = \{ (0,0,2), \, (1,0,1), \, (0,1,1), \, (1,1,0) \}$.
    Indeed, the quadratic form  $\,  f(x) = c_1  x_3^2 + c_2 x_1 x_3 + c_3 x_2 x_3 + c_4 x_1x_2 \,$
    is copositive if and only if all coefficients $c_i$ are nonnegative,
    because  $\mathcal{A}$ is the set of  vertices of $\conv(\mathcal{A})$.
In symbols,
    \begin{align*}
        \mathcal{C}_\mathcal{A} \,= \,\{c \in \mathbb{R}^4 \mid c_1 \geq 0, \, c_2 \geq 0, \, c_3 \geq 0, \, c_4 \geq 0\}.
    \end{align*}
    The algebraic boundary of $\mathcal{C}_\mathcal{A}$ is the
    reducible hypersurface defined by the polynomial $c_1c_2c_3c_4$.
    On the other hand, the principal $\mathcal{A}$-determinant 
    equals $E_\mathcal{A} = c_1c_2c_3c_4(c_1c_4-c_2c_3)$.
\end{example}

We now return to Feynman integrals.
Fix a Feynman graph $G$ with $n$ internal edges.
As before, $\mathcal{T}$ denotes the set of all spanning trees of the graph $G$.
These are the bases of the graphic matroid of $G$. By definition, the support of  $\mathcal{F}_z(x)$ 
equals
$\mathcal{A}_G \,=\, \mathcal{A}^1_G \cup \mathcal{A}^2_G$, where
\begin{align}
\label{Eq:supportFz}
  \mathcal{A}^1_G = \Big\{ \,e_k \,+\!\! \sum_{j\in [n] \backslash T } \!e_j \,\mid \,T \in \mathcal{T} ,\,k \in T \Big\},
\quad 
    \mathcal{A}^2_G\, =\, \Big\{ 2e_q \,+\!\!\! \sum_{j\in [n]\backslash (T \cup \{q\})}\!\!\!
    \!\! e_j \,\mid\, T \in \mathcal{T} ,\; q \not\in T \Big\}.
 \end{align}

\begin{example}[Parachute]
\label{Ex:Parachute}
We discuss this for the following graph with $n=4$ and $\ell=2$:
    \begin{equation}
\label{Eq:Parachute}
\begin{aligned}
\begin{tikzpicture}[scale=0.4]
    \node[] (P4) at (2.7, 3.5) {\small $p_4$};
     \node[] (P3) at (2.7, -3.5) {\small $p_3$};
    \node[] (P1) at (-5.6, 1.5) {\small $p_1$};
     \node[] (P2) at (-5.6, -1.5) {\small $p_2$};
    \node[fill=black, circle, inner sep=1.5pt]
    (A) at (-3, 0) {};    
     \node[fill=black, circle, inner sep=1.5pt] (B) at (0, 1.8) {};
       \node[fill=black, circle, inner sep=1.5pt] (C) at (0, -1.8) {};
     \node[fill=gray, circle, inner sep=1.5pt] (B1) at (2, 3.3) {};
      \node[fill=gray, circle, inner sep=1.5pt]  (B2) at (2, -3.3) {};
    \node[fill=gray, circle, inner sep=1.5pt] (A1) at (-5, 1) {};
      \node[fill=gray, circle, inner sep=1.5pt] (A2) at (-5, -1) {};
     \draw[bend left=30] (B) to (C);
     \draw[bend right=30] (B) to (C);
      \draw (A) -- (B);
          \node[] (m1) at (-1.8, 1.3) {\footnotesize $m_1$};
          \node[] (m2) at (-1.8, -1.3) {\footnotesize $m_2$};
             \node[] (m3) at (-1.2, 0) {\footnotesize $m_3$};
          \node[] (m4) at (1.2, 0) {\footnotesize $m_4$};
    \draw (A) -- (C);
       \draw (A) -- (A1);
    \draw (A) -- (A2);
           \draw (B) -- (B1);
    \draw (C) -- (B2);
\end{tikzpicture}
\end{aligned}
\end{equation}
The set of spanning trees equals
$ \,\mathcal{T} = \bigl\{\{2,4\},\;\{2,3\},\;\{1,4\},\;\{1,3\},\;\{1,2\}\bigr\}$. In pictures,
\begin{center}
\begin{minipage}{0.2\textwidth}
\centering
\begin{tikzpicture}[scale=0.4]
    \node[fill=black, circle, inner sep=1.5pt]
    (A) at (-3, 0) {};    
     \node[fill=black, circle, inner sep=1.5pt] (B) at (0, 1.8) {};
       \node[fill=black, circle, inner sep=1.5pt] (C) at (0, -1.8) {};
     \draw[bend left=30] (B) to (C);
          \node[] (m2) at (-1.8, -1.3) {\footnotesize $m_2$};
             \node[] (m4) at (1.2, 0) {\footnotesize $m_4$};
    \draw (A) -- (C);
\end{tikzpicture}
\end{minipage}
\hspace{-10pt}
\begin{minipage}{0.2\textwidth}
\centering
\begin{tikzpicture}[scale=0.4]
    \node[fill=black, circle, inner sep=1.5pt]
    (A) at (-3, 0) {};    
     \node[fill=black, circle, inner sep=1.5pt] (B) at (0, 1.8) {};
       \node[fill=black, circle, inner sep=1.5pt] (C) at (0, -1.8) {};
     \draw[bend right=30] (B) to (C);
          \node[] (m2) at (-1.8, -1.3) {\footnotesize $m_2$};
             \node[] (m3) at (-1.2, 0) {\footnotesize $m_3$};
    \draw (A) -- (C);
\end{tikzpicture}
\end{minipage}
\hspace{-10pt}
\begin{minipage}{0.2\textwidth}
\centering
\begin{tikzpicture}[scale=0.4]
    \node[fill=black, circle, inner sep=1.5pt]
    (A) at (-3, 0) {};    
     \node[fill=black, circle, inner sep=1.5pt] (B) at (0, 1.8) {};
       \node[fill=black, circle, inner sep=1.5pt] (C) at (0, -1.8) {};
     \draw[bend left=30] (B) to (C);
      \draw (A) -- (B);
          \node[] (m1) at (-1.8, 1.3) {\footnotesize $m_1$};
             \node[] (m4) at (1.2, 0) {\footnotesize $m_4$};
\end{tikzpicture}
\end{minipage}
\hspace{-10pt}
\begin{minipage}{0.2\textwidth}
\centering
\begin{tikzpicture}[scale=0.4]
    \node[fill=black, circle, inner sep=1.5pt]
    (A) at (-3, 0) {};    
     \node[fill=black, circle, inner sep=1.5pt] (B) at (0, 1.8) {};
       \node[fill=black, circle, inner sep=1.5pt] (C) at (0, -1.8) {};
     \draw[bend right=30] (B) to (C);
      \draw (A) -- (B);
          \node[] (m1) at (-1.8, 1.3) {\footnotesize $m_1$};
            \node[] (m3) at (-1.2, 0) {\footnotesize $m_3$};
\end{tikzpicture}
\end{minipage}
\hspace{-10pt}
\begin{minipage}{0.2\textwidth}
\centering
\begin{tikzpicture}[scale=0.4]
    \node[fill=black, circle, inner sep=1.5pt]
    (A) at (-3, 0) {};    
     \node[fill=black, circle, inner sep=1.5pt] (B) at (0, 1.8) {};
       \node[fill=black, circle, inner sep=1.5pt] (C) at (0, -1.8) {};
      \draw (A) -- (B);
          \node[] (m1) at (-1.8, 1.3) {\footnotesize $m_1$};
          \node[] (m2) at (-1.8, -1.3) {\footnotesize $m_2$};
    \draw (A) -- (C);
\end{tikzpicture}
\end{minipage}
\end{center}

The support set $\mathcal{A} = \mathcal{A}_G$ consists of $14=4+10$ lattice points in $\NN^4$. It is the union of
\begin{equation}
 \label{Eq:ExponentsParachute}
  \begin{matrix}
  \mathcal{A}_G^1 &=& \{ (1,1,1,0), \; (1,1,0,1), \; (1,0,1,1), \; (0,1,1,1) \} \qquad {\rm and} \smallskip \\
    \mathcal{A}_G^2 &=&  \{ (2,0,1,0),  (1,0,2,0), (2,0,0,1),  (1,0,0,2),
    (0,2,1,0),  \\ & & \quad\,   (0,1,2,0),  (0,2,0,1),  (0,1,0,2),  (0,0,1,2),  (0,0,2,1) \}.
\end{matrix}
\end{equation}

\begin{figure}[h]
\centering
\includegraphics[scale=0.4]{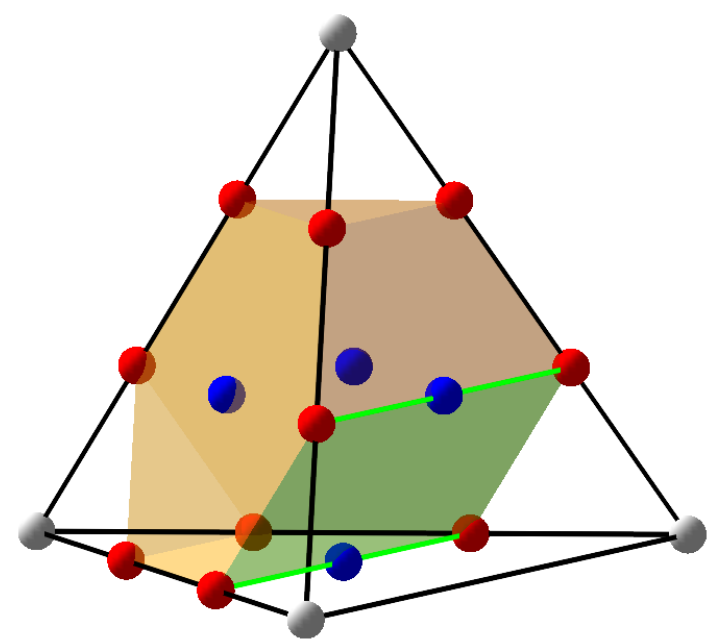}
\caption{The Newton polytope for the parachute graph $G$,
with $\mathcal{A}_G^1$ in blue and $\mathcal{A}_G^2$ in red.
}
\label{FIG:Parachute}
\end{figure}

\noindent This configuration is shown in Figure \ref{FIG:Parachute}.
The general polynomial with support $\mathcal{A}_G$ equals
$$  \begin{matrix}
f(x) & = & c_1 x_1^2x_3 + c_2 x_1x_3^2  + c_3 x_1^2x_4 + c_4 x_1x_4^2 +
c_5 x_2^2x_3+ c_6 x_2x_3^2 +  c_7 x_2^2x_4+ c_8 x_2x_4^2  \\ & & + \, c_9 x_3^2x_4 + c_{10} x_3x_4^2 +
c_{11} x_1x_2x_3 + c_{12} x_1x_2x_4 + c_{13} x_1x_3x_4+ c_{14} x_2x_3x_4.
\end{matrix}
$$
The algebraic boundary  of the  copositive cone $\mathcal{C}_{\mathcal{A}} \subset \RR^{14}$
is in the hypersurface defined by the principal ${\mathcal{A}}$-determinant $E_\mathcal{A}$.
To study this, we use that the toric threefold $X_\mathcal{A}$
is smooth and has degree $18$ in $\PP^{13}$. This implies that
$E_\mathcal{A}$ has degree $4 \cdot 18 = 72$.
The irreducible factors correspond to faces of $P = {\rm conv}(\mathcal{A})$.
The largest factor, of degree $24$, is  the $\mathcal{A}$-discriminant.
The two hexagonal facets contribute discriminants of degree $12$.
The green rectangle facet contributes the resultant of two binary quadrics,
which has degree $4$. The two trapezoid facets contribute cubic factors
$c_2^2 c_5+c_1 c_6^2-c_2 c_6 c_{11}$ and
$c_4^2 c_7+c_3 c_8^2-c_4 c_8 c_{12}$.
The two long edges of the green rectangle have discriminants of degree $2$,
and the ten vertices give
$c_1 c_2 c_3 c_4 c_5 c_6 c_7 c_8 c_9 c_{10}$.
The degrees of all factors add up to
$ 24 + (2\cdot 12 + 4 +  2\cdot 3) + 2 \cdot 2 + 10 \cdot 1 = 72$.

The specialization of $f(x)$ arising from the Feynman graph depends on only $7$ parameters:
$$  \! \mathcal{F}_z \,=\,a x_1 x_2 (x_3+x_4)+b x_2 x_3 x_4+c x_1 x_3 x_4
+((x_1+x_2) (x_3+x_4)+x_3 x_4)(m_1 x_1+m_2 x_2+m_3 x_3+m_4 x_4). $$
The discriminant of $\mathcal{F}_z$ is much smaller than the $\mathcal{A}$-discriminant.
It has $64$ terms of degree~$6$:
\begin{equation}
\label{eq:landauparachute} \begin{matrix}
L \,\,= \,\, 16m_1^2 m_2^2 m_3^2 -32 m_1^2 m_2^2 m_3 m_4-32 m_1^2 m_2 m_3^2 m_4
 - \,\cdots\,
+8 m_2 m_3 m_4 a c^2 \qquad \phantom{dodo} \\ \quad +\,2 m_2 m_3 a^2 c^2  +2 m_2 m_4 a^2 c^2
-4 m_3 m_4 a^2 c^2+2 m_1 m_2 b^2 c^2+a^2 b^2 c^2-2 m_2 a b c^3+m_2^2 c^4.
\end{matrix}
\end{equation}
This is the Landau discriminant of the parachute $G$.
We find that $z = (m_1,m_2,m_3,m_4,a,b,c)$ is in the interior of the copositive cone $\mathcal{C}_G$
provided
 $L < 0$ and $   a + m_1 + m_2 + 2(m_1m_2)^{1/2}  > 0$,
$$ \begin{matrix}
&     b + m_2 + m_3 + m_4 + 2(m_2m_3)^{1/2} + 2(m_2m_4)^{1/2} + 2 (m_3m_4)^{1/2}  > 0, \\
{\rm and} &     c + m_1 + m_3 + m_4 + 2 (m_1 m_3)^{1/2} + 2(m_1 m_4)^{1/2} + 2 (m_3 m_4)^{1/2}  > 0.
\end{matrix}          $$

For a numerical instance let $ m_1 \!=\! 5, m_2 \!=\! 7, m_3 \!=\! 7, m_4 \!=\! 2, 
 a\! =\! -16, b \!=\! -36, c \!=\! -31$.
The minimum of $\mathcal{F}_z(x)$ on the tetrahedron
$\{x \geq 0 : x_1 +x_2 + x_3 + x_4 = 1\}$ equals
$0.01389365$.
This is attained at 
$(x_1,x_2,x_3,x_4) = (0.211294, 0.1870297, 0.208148, 0.393528)$.
For a certificate, note that all $1823248$ monomials in
 $\mathcal{F}_z(x) \cdot (x_1+x_2+x_3+x_4)^{217}$ have positive coefficients.
\end{example}

We now return to the general setting, where $\mathcal{A} = \mathcal{A}_G$.
The principal $\mathcal{A}$-determinant $E_\mathcal{A}$
describes the copositive cone $\mathcal{C}_\mathcal{A}$ algebraically.
In our application to physics, we are interested in the 
smaller cone $\mathcal{C}_G$, which is
the intersection with the kinematic subspace in
  $\RR[\mathcal{A}]$. In other words,
  $\mathcal{C}_G$ is obtained from $\mathcal{C}_\mathcal{A}$
by replacing (\ref{eq:suchpoly}) with the polynomial  in (\ref{Eq:F}).
For $\ell \geq 2$, this restriction usually leads to a considerable
simplification in the copositive cone.
We saw this in Example~\ref{Ex:Parachute}.
The situation for $\ell=1$ is different, thanks to
Theorem~\ref{thm:universality}.

We conclude this section by transferring Remark \ref{rmk:dualcone}
and Theorem \ref{thm:CA} from the larger cone $\mathcal{C}_\mathcal{A}$
to the smaller cone $\mathcal{C}_G$. 
Fix a Feynman graph $G$, and write $M$ for the number of kinematic parameters in $z$.
Their inclusion into the   space of coefficients $c_{\bf a}$ is dual to a linear
projection $\pi: \RR[\mathcal{A}] \rightarrow \RR^M$. The image of 
 the toric variety $X_\mathcal{A}$ under the linear projection $\pi$ is
 a rational variety $\pi(X_\mathcal{A})$ inside $\RR^M$. This is
 the cone over a projective variety in $\PP^{M-1}$.
 
 \begin{example}[Parachute]
 When equating $f(x)$ with $\mathcal{F}_z(x)$ in Example \ref{Ex:Parachute},
 each of the $14$ coefficients $c_i$ is
an $\NN$-linear combination of $a,b,c,m_1,m_2,m_3,m_4$.
This defines the linear map $\pi: \RR^{14} \rightarrow \RR^7$.
The image of the toric threefold $X_\mathcal{A}$ is the
threefold in $\PP^6$ parametrized~by
$$  \begin{matrix}
a= x_1x_2 x_3+x_1 x_2 x_4\,,\,\,\,
b = x_2 x_3 x_4\,,\,\,\,
c =  x_1 x_3 x_4\,, \\
m_1 = x_1^2 x_3 \!+\!x_1^2 x_4\!+\!x_1 x_2 x_3\!+\!x_1 x_2 x_4\!+\!x_1 x_3 x_4, \,
m_2 = x_1 x_2 x_3\!+\!x_1 x_2 x_4 \!+\! x_2^2 x_3\!+\! x_2^2 x_4\!+\!x_2 x_3 x_4,\\
m_3 = x_1 x_3^2 + x_1 x_3 x_4 + x_2 x_3^2 + x_2 x_3 x_4 + x_3^2 x_4, \,
m_4 = x_1 x_3 x_4 + x_1 x_4^2 + x_2 x_3 x_4 + x_2 x_4^2 + x_3 x_4^2.
\end{matrix}
$$
The threefold $\pi(X_\mathcal{A})$ has degree $7$, and its prime ideal equals
$\langle b m_1-c m_2, a m_1+a m_2+c m_2-m_1 m_2, a b+a c+b c-c m_2,
     c m_2 m_3+c m_2 m_4-a m_3 m_4, b m_2 m_3+b m_2 m_4+a m_3m_4+b m_3 m_4-m_2 m_3 m_4,
     c m_1 m_3+c m_1 m_4+a m_3 m_4+c m_3 m_4-m_1 m_3 m_4  \rangle   $.
     The variety  projectively dual to $\pi(X_\mathcal{A})$   is the sextic hypersurface 
defined by the Landau discriminant $L$ displayed in (\ref{eq:landauparachute}).
 \end{example}

We write $\pi(X_{\mathcal{A},\geq 0})$ for the nonnegative part of the 
variety $\pi(X_\mathcal{A})$, now viewed as an affine cone in $\RR^M$. This is the closure of all points 
given by some positive parameter point $x \in \RR^n_{> 0}$.

\begin{corollary} The copositive cone $\mathcal{C}_G$ of a Feynman graph $G$ is the convex cone 
which is dual to the
convex cone spanned by the positive variety  $\pi(X_{\mathcal{A},\geq 0})$. In symbols,  we have
$$ \quad \mathcal{C}_G \,\,\, = \,\,\, {\rm conv} \bigl( \pi(X_{{\mathcal A},\geq 0}) \bigr)^\vee \quad 
\subset \quad \RR^M. $$
\end{corollary}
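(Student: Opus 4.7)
The plan is to derive the corollary from Remark~\ref{rmk:dualcone} (the larger cone $\mathcal{C}_\mathcal{A}$ is dual to the convex cone spanned by $X_{\mathcal{A},\geq 0}$) by applying the elementary linear-algebraic fact that the preimage of a dual cone under a linear map is the dual of the image under the adjoint map.

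First, I would formalize the inclusion of kinematic parameters into coefficients: the assignment $\iota \colon \RR^M \to \RR[\mathcal{A}]$, $z \mapsto \mathcal{F}_z$, is linear because of the explicit form (\ref{Eq:F}) of the second Symanzik polynomial. By the very definition of the copositive cone of $G$, we have $\mathcal{C}_G = \iota^{-1}(\mathcal{C}_\mathcal{A})$. Next, I would identify the map $\pi$ in the statement as the adjoint of $\iota$. Using the natural pairing $\langle f, x\rangle := f(x) = \sum_{\bf a} c_{\bf a}\, x^{\bf a}$ between $\RR[\mathcal{A}]$ and $X_\mathcal{A} \subset \RR[\mathcal{A}]$ (where a point of $X_\mathcal{A}$ is represented by the vector of monomial evaluations $(x^{\bf a})_{{\bf a} \in \mathcal{A}}$), the composition $z \mapsto \mathcal{F}_z(x)$ is linear in $z$ and hence equals $\langle z, \pi(x)\rangle_{\RR^M}$ for a uniquely determined $\pi(x) \in \RR^M$. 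Comparing coefficients of $a, b, c, m_1, \ldots$ monomial-by-monomial, as is illustrated in the parachute computation just above the corollary, confirms that $\pi$ agrees with the projection $\iota^\ast$.

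With this identification, the proof reduces to a chain of equivalences:
\begin{equation*}
z \in \mathcal{C}_G \,\Longleftrightarrow\, \mathcal{F}_z(x) \geq 0 \text{ for all } x \in \RR^n_{\geq 0}
\,\Longleftrightarrow\, \langle z, \pi(x)\rangle \geq 0 \text{ for all } x \in X_{\mathcal{A},\geq 0}
\,\Longleftrightarrow\, z \in \conv\!\bigl(\pi(X_{\mathcal{A},\geq 0})\bigr)^\vee.
\end{equation*}
The first step is the definition of $\mathcal{C}_G$; the second step uses that $X_{\mathcal{A},\geq 0}$ is the image of $\RR^n_{\geq 0}$ under the moment map $x \mapsto (x^{\bf a})$, together with the pairing identity $\mathcal{F}_z(x) = \langle z, \pi(x)\rangle$; and the third step invokes Remark~\ref{rmk:dualcone} pulled back via $\iota$, together with the linearity of $\pi$ (which lets one commute $\pi$ past $\conv$).

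There is no genuine obstacle — this is essentially a formal consequence of Remark~\ref{rmk:dualcone}. The only point requiring care is the interplay of closures: one should note that $\pi(X_{\mathcal{A},\geq 0})$ already is the closure in $\RR^M$ of the image of the open orthant $\RR^n_{>0}$, so the infinite linear system indexing $\mathcal{C}_G$ gives the same dual cone whether we index by $x \in \RR^n_{>0}$, by $x \in \RR^n_{\geq 0}$, or by $y \in \pi(X_{\mathcal{A},\geq 0})$. Since dual cones depend only on closed convex hulls, this suffices.
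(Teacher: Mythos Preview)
Your argument is correct and matches the paper's intended approach: the paper states this corollary without proof, presenting it simply as the ``transfer'' of Remark~\ref{rmk:dualcone} along the linear projection $\pi$ dual to the inclusion of kinematic parameters. Your chain of equivalences is exactly that transfer spelled out, and your care about closures is appropriate though not strictly needed since dual cones are insensitive to taking closed convex hulls.
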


We now turn to the {\em principal Landau determinant}  
which was introduced by Fevola, Mizera and Telen in \cite[Definition 3.5]{FMT}.
This is the squarefree polynomial ${\rm PLD}_G(z)$ which takes over
the role of the principal $\mathcal{A}$-determinant after
the toric variety $X_\mathcal{A}$ has been replaced by its image under
the projection $\pi$.
Namely, the principal Landau determinant ${\rm PLD}_G(z)$ vanishes whenever the hypersurface 
given by  the second Symanzik polynomial $\mathcal{F}_z$, or one of its facial restrictions
$\mathcal{F}_z|_Q$, is singular at some point
$x \in (\mathbb{C}^*)^n$. Using the same argument as in the proof of 
Theorem \ref{thm:CA}, we can derive the following result
for any Feynman graph $G$.

\begin{corollary} \label{cor:CG}
The boundary of the copositive cone $\mathcal{C}_G$ is contained in the 
hypersurface  defined by the principal Landau determinant ${\rm PLD}_G$.
Hence,  the algebraic boundary of $\mathcal{C}_G$ is the product of 
some of the Landau discriminants that appear in the factorization of  ${\rm PLD}_G$.
\end{corollary}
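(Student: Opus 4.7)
The plan is to imitate the proof of Theorem \ref{thm:CA} almost verbatim, substituting the principal Landau determinant ${\rm PLD}_G$ for the principal $\mathcal{A}$-determinant $E_\mathcal{A}$. The key observation that makes the substitution work is that the argument there produces a singular point of some facial restriction which lies in the open torus $(\CC^*)^n$, and this is exactly the condition triggering the vanishing of ${\rm PLD}_G$ by its defining property.

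First I would establish analogs of Lemmas \ref{eq:onboundary} and \ref{Lemma:InteriorCoposCone} for the smaller cone: a point $z \in \mathcal{C}_G$ lies on the boundary of $\mathcal{C}_G$ (relative to the kinematic subspace) if and only if there is a face $Q$ of $\Newt(\mathcal{F})$ and a point $u \in \RR^n_{>0}$ with $\mathcal{F}_z|_Q(u) = 0$. The ``only if'' direction follows from compactness of $X_{\mathcal{A}_G,\geq 0}$: if $\mathcal{F}_z$ is strictly positive on every positive torus orbit, then the same is true for all $z'$ in a small kinematic neighborhood of $z$, so $z$ lies in the interior. For the ``if'' direction, the map $z' \mapsto \mathcal{F}_{z'}|_Q(u)$ is a non-constant linear functional on the kinematic subspace, so $z$ can be perturbed within that subspace to make this value negative, placing $z$ on the boundary of $\mathcal{C}_G$.

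Given such $z \in \partial \mathcal{C}_G$ together with the face $Q$ and point $u$ above, copositivity of $\mathcal{F}_z$ forces $\mathcal{F}_z|_Q$ to be copositive as well (by taking a limit along a one-parameter subgroup of the torus selecting $Q$), and makes $u$ a local minimum of $\mathcal{F}_z|_Q$ on the open orthant. Hence $\nabla \mathcal{F}_z|_Q(u) = 0$, so the hypersurface $\{\mathcal{F}_z|_Q = 0\}$ is singular at $u \in (\CC^*)^n$. By the defining property of the principal Landau determinant recalled just before the corollary, this forces ${\rm PLD}_G(z) = 0$, establishing the first assertion.

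For the second assertion, the cone $\mathcal{C}_G$ is full-dimensional, so its algebraic boundary is a real hypersurface cut out by a squarefree polynomial. The containment in $\{{\rm PLD}_G = 0\}$, combined with the factorization of ${\rm PLD}_G$ into irreducible Landau discriminants indexed by faces of $\Newt(\mathcal{F})$ (per \cite{FMT}), implies that each irreducible component of the algebraic boundary coincides with one of these Landau discriminants. The defining polynomial of the algebraic boundary is therefore the product of a distinguished subset of them. The only step requiring genuine care is the ``if'' direction in the boundary characterization above, where one must verify that the linear functional $z' \mapsto \mathcal{F}_{z'}|_Q(u)$ is non-constant on the kinematic subspace; this is a mild linear-algebra check (a uniform shift of all masses $m_i$ changes the value whenever $Q$ meets $\mathcal{A}_G^2$, and for faces $Q \subset \mathcal{A}_G^1$ the non-triviality of the $k_{ij}$-dependence handles the remaining cases).
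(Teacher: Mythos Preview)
Your proposal is correct and follows essentially the same route the paper indicates: it literally says ``using the same argument as in the proof of Theorem~\ref{thm:CA}'', and your write-up is exactly that argument transported to the kinematic subspace, with the principal Landau determinant replacing $E_\mathcal{A}$. The extra work you do on the ``if'' direction of the boundary characterization (the linear-functional check in your last paragraph) is not needed for the corollary---only the ``only if'' direction is used---so you can safely drop that discussion.
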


We conclude that the algorithms in \cite[Section 5]{FMT} will be
important ingredients in future methods for computing semialgebraic
descriptions of copositive cones
for Feynman integrals.

\section{Pólya's Method for Feynman Graphs}
\label{sec6}
\label{Sec:PolyaSymanzik}

In this section we
apply Pólya's classical representation   \cite{Polya, PowersReznick} to the second Symanzik polynomial~\eqref{Eq:F}.
We assume throughout that $G$ is a Feynman graph without bridges, that is, $G$ cannot be disconnected by removing a single edge.
We establish the following result:

\begin{theorem}
\label{Thm:PolyaSymanzik}
A parameter vector $z$ lies in the interior of the copositive cone $\mathcal{C}_G$ if and only if $(x_1+\dots+x_n)^N\mathcal{F}_{z}$ has only positive coefficients,
for some $N \in \mathbb{N}$, and $m_1,\dots ,m_n > 0$.
\end{theorem}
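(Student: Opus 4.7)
The plan is to deduce the two directions of Theorem~\ref{Thm:PolyaSymanzik} by combining Castle--Powers--Reznick's P\'olya-with-Zeros theorem \cite{CastlePowersReznick} with a combinatorial analysis of the zero structure of $\mathcal{F}_z$ on the simplex $\Delta_{n-1}$.

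The easy direction ($\Leftarrow$) is short. If $(x_1+\dots+x_n)^N\mathcal{F}_z$ has only positive coefficients on its support and all $m_i>0$, then the product is strictly positive on the open orthant $\mathbb{R}_{>0}^n$, so $\mathcal{F}_z>0$ there, and by continuity $\mathcal{F}_z \geq 0$ on $\mathbb{R}_{\geq 0}^n$; hence $z\in\mathcal{C}_G$. Both the coefficient-positivity condition and the inequalities $m_i>0$ are open in $z$, so a full neighborhood of $z$ remains in $\mathcal{C}_G$, making $z$ an interior point.

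For the hard direction ($\Rightarrow$), I would proceed in four steps. (1) Identify the \emph{zero faces} of $\mathcal{F}_z$ on $\Delta_{n-1}$: from the description \eqref{Eq:supportFz} of $\mathcal{A}_G=\mathcal{A}_G^1\cup\mathcal{A}_G^2$, the restriction $\mathcal{F}_z|_{\Delta_F}$ vanishes identically iff no $\alpha\in\mathcal{A}_G$ has $\supp \alpha\subseteq F$, equivalently iff $E\setminus F$ contains a cycle of $G$; the maximal zero faces are the complements of the circuits of $G$. (2) Establish $m_i>0$: since $G$ is bridgeless, there is a spanning tree $T\not\ni i$; then $x_i^2\prod_{e\in(E\setminus T)\setminus\{i\}}x_e$ is a vertex monomial of $\Newt(\mathcal{F}_z)$ with coefficient exactly $m_i$, and the extremality of this vertex combined with $z\in\interior(\mathcal{C}_G)$ forces $m_i>0$. (3) Show that the zeros of $\mathcal{F}_z$ on $\Delta_{n-1}$ lie only on zero faces: if $u\in\relint(\Delta_F)$ satisfied $\mathcal{F}_z(u)=0$ for a non-zero face $\Delta_F$, then the interior property lets us perturb $z\to z+\delta z'$ in any direction of parameter space while staying in $\mathcal{C}_G$, which gives $\delta\mathcal{F}_{z'}(u)\geq 0$ for both signs of $\delta$ and hence $\mathcal{F}_{z'}(u)=0$ for every $z'$. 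Reading off the $m_e$-coefficient of the linear map $z'\mapsto\mathcal{F}_{z'}(u)$ gives $u_e\mathcal{U}(u)=0$ for every edge $e$; since some $u_e>0$, this forces $\mathcal{U}(u)=0$, i.e., $E\setminus F$ contains a cycle, contradicting the assumption that $\Delta_F$ is not a zero face.

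Step (4) applies CPR and is the main obstacle. Its remaining hypothesis is a transverse positive-definiteness (``Hessian'') condition at each maximal zero face $\Delta_{E\setminus C}$. I would expand $\mathcal{F}_z(u+\epsilon v)$ for $u\in\relint(\Delta_{E\setminus C})$ and $v$ supported on the circuit $C$, and classify the contributing monomials at each order in $\epsilon$ by the value of $|\alpha_C|$. The diagonal contributions come from the $\mathcal{A}_G^2$ points $2e_q+\sum_{j\notin T\cup\{q\}}e_j$ with $q\in C$, which carry coefficient $m_q>0$, while the cross terms reduce to strict copositivity of a smaller bubble- or cycle-type Feynman polynomial attached to $C$ whose kinematic parameters should inherit strict copositivity from $z\in\interior(\mathcal{C}_G)$ by restricting the perturbation argument of Step (3) to the corresponding sub-parameter space. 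Combining the diagonal and cross contributions would verify the CPR hypothesis, and the conclusion of CPR then gives that $(x_1+\dots+x_n)^N\mathcal{F}_z$ has non-negative coefficients for $N\gg 0$, strictly positive on every monomial whose exponent is not contained in any zero face of $\mathcal{F}_z$, matching the statement of the theorem.
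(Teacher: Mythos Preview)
Your easy direction and Steps~(1)--(3) are reasonable, but Step~(4) misidentifies what the Castle--Powers--Reznick theorem actually requires. As stated in Theorem~\ref{Thm:PolyaWithZeros}, CPR has no ``Hessian'' or transverse positive-definiteness hypothesis. Instead, for each face $\Delta_I \subseteq V(\mathcal{F}_z)$ it imposes two combinatorial conditions: (1) every negative exponent $\mathbf{b}\in\mathcal{A}^-$ is $\preceq_I$-dominated by some positive exponent $\mathbf{a}\in\mathcal{A}^+$; and (2) for each $\mathbf{a}\in\mathcal{A}^+$ that is $\preceq_I$-minimal, the restriction $\mathcal{F}_z|_{\Gamma_{I,\mathbf{a}}}$ is strictly positive on $\mathbb{R}^n_{>0}$, where $\Gamma_{I,\mathbf{a}}=\{\mathbf{b}\in\mathcal{A}: \mathbf{b}_i=\mathbf{a}_i \text{ for all } i\in I\}$. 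Your Taylor expansion in $\epsilon$ is loosely related to the lowest $\preceq_I$-level in condition~(2), but it does not verify either condition in the form CPR needs, and your proposal does not address condition~(1) at all. Note also that the sets $\Gamma_{I,\mathbf{a}}$ are in general \emph{not} faces of the Newton polytope (Example~\ref{Ex:PolyaFails} and the example following it), so there is genuine work to do here.

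The paper handles these conditions via two combinatorial lemmas about the support $\mathcal{A}_G$. Lemma~\ref{Lemma:Condition1} shows, by a matroid basis-exchange argument, that every $\mathbf{b}\in\mathcal{A}_G^1$ is $\preceq_I$-dominated by some $\mathbf{a}\in\mathcal{A}_G^2$; since $m_i>0$ forces $\mathcal{A}^-_G\subseteq\mathcal{A}_G^1$ and $\mathcal{A}_G^2\subseteq\mathcal{A}^+_G$, this gives condition~(1). The key insight is Lemma~\ref{Lemma:MinimalGammas}: for Symanzik polynomials of bridgeless graphs, every minimal set $\Gamma_{I,\mathbf{a}}$ equals $\mathcal{A}_G\cap Q$ for some face $Q$ of $\conv(\mathcal{A}_G)$, the proof proceeding by exhibiting an explicit inner normal vector for~$Q$. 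Once this is established, condition~(2) follows immediately from the characterization of $\interior(\mathcal{C}_G)$ by face-wise strict positivity in Lemma~\ref{Lemma:InteriorCoposCone}. Your proposed route through bubble- or cycle-type sub-polynomials would at best recover special cases of this face correspondence; the general statement needs the explicit construction in Lemma~\ref{Lemma:MinimalGammas}.
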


Our proof is based on work of Castle, Powers, and Reznick in \cite{CastlePowersReznick}.
To recall this, we introduce some notation.  We write  $\Delta$ for the standard $(n-1)$-simplex
 in $\RR^n$. Every $I \subseteq [n]$ determines a \emph{face} $\Delta_I = \{ x \in \Delta \mid x_i = 0 \text{ for } i \in I\}$. For a polynomial
 $f = \sum_{{\bf a} \in \mathcal{A}}  c_{{\bf a}} x^{\bf a}$, we~set
 \[ \mathcal{A}^+\, :=\, \{ {\bf a} \in \mathbb{N}^n \mid c_{\bf a} > 0 \}
\quad  {\rm and} \quad  \mathcal{A}^- \,:=\, \{ {\bf a} \in \mathbb{N}^n \mid c_{\bf a} < 0 \} .\]
We now define a partial order on $\mathbb{N}^n$ that depends on $I$.
 For ${\bf a}, {\bf b} \in \mathbb{N}^n$, we set ${\bf a} \preceq_I {\bf b}$ if ${\bf a}_i \leq {\bf b}_i$ for $i \in I$. If one of these inequalities is strict, then we write ${\bf a} \prec_I {\bf b}$. An element ${\bf a} \in \mathcal{A}^+$ is \emph{minimal for $I $} if there is no ${\bf a}' \in \mathcal{A}^+$ such that ${\bf a}' \prec_I {\bf a}$. 
 If this holds then we define 
\begin{align}
\label{Eq:falphaI}
 \Gamma_{I,{\bf a}} \,\,:= \,\,\{\,{\bf b } \in \mathcal{A} \;|\; {\bf b}_i ={\bf a}_i \text{ for } i \in I\}.
\end{align}

We next state {\em P\'olya's Theorem with Zeros}. This  was established in
\cite[Theorem 2]{CastlePowersReznick}.

\begin{theorem}\label{Thm:PolyaWithZeros}
      Let $f = \sum_{{\bf a} \in \mathcal{A}}  c_{{\bf a}} x^{\bf a}$ be a copositive polynomial
      such that
        $V(f) \cap \Delta$ is a union of faces of $\Delta$. 
        There exists $N \in \mathbb{N}$ such that $(x_1+\dots+x_n)^N f(x)$ has positive~coefficients
         if and only if,  for every face $\Delta_I $ in the hypersurface $ V(f)$, 
         the following two conditions hold: \vspace{-0.12cm}
      \begin{itemize}
          \item[(1)] For each ${\bf b}$ in the negative support $ \mathcal{A}^-$, there exists ${\bf a} \in \mathcal{A}^+$ such that ${\bf a} \preceq_I {\bf b}$. \vspace{-0.12cm}
          \item[(2)] If ${\bf a} \in \mathcal{A}^+$ is minimal for $I$, then the polynomial $f|_{\Gamma_{I,{\bf a}}}$ is positive on $\mathbb{R}^n_{>0}$.
      \end{itemize}
\end{theorem}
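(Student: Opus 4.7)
The plan is to derive Theorem~\ref{Thm:PolyaSymanzik} from P\'olya's Theorem with Zeros (Theorem~\ref{Thm:PolyaWithZeros}) applied directly to $f = \mathcal{F}_z \in \RR[\mathcal{A}_G]$. The easy implication follows by substitution into the P\'olya certificate, while the hard implication requires verifying the hypotheses of Theorem~\ref{Thm:PolyaWithZeros} combinatorially, using the support description~\eqref{Eq:supportFz} and the bridgeless hypothesis on $G$.

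For $(\Leftarrow)$: given positive masses and positivity of all coefficients of $(x_1{+}\cdots{+}x_n)^N \mathcal{F}_z$, the $\mathcal{C}_G$-analogue of Lemma~\ref{Lemma:InteriorCoposCone} (established by the same torus-orbit argument, since $\mathcal{C}_G$ is cut from $\mathcal{C}_{\mathcal{A}_G}$ by a linear subspace of $\RR[\mathcal{A}_G]$) reduces the claim to showing $\mathcal{F}_z|_Q(u) > 0$ for every face $Q$ of $\Newt(\mathcal{F}_z)$ and every $u \in \RR^n_{>0}$. Each such $Q$ is cut out by equations $x_i = 0$ for $i$ in some subset $I \subsetneq [n]$. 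Substituting these into the P\'olya certificate preserves positivity of coefficients and leaves at least one monomial alive (since $Q$ is nonempty), so $\bigl(\sum_{j \notin I} x_j\bigr)^N \mathcal{F}_z|_Q$ has positive coefficients; dividing through by the positive factor $\bigl(\sum_{j \notin I} u_j\bigr)^N$ yields the required inequality.

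For $(\Rightarrow)$: assume $z \in {\rm int}(\mathcal{C}_G)$. Each $m_i > 0$ follows by testing on the face of $\Newt(\mathcal{F}_z)$ maximizing the $x_i$-coordinate, whose restriction equals $m_i\, x_i^2\, \mathcal{U}_{G \setminus i}(x)$; this is not identically zero because $G$ is bridgeless, so $G \setminus i$ is connected and $\mathcal{U}_{G \setminus i}$ has positive coefficients. To invoke Theorem~\ref{Thm:PolyaWithZeros}, observe first that $V(\mathcal{F}_z) \cap \Delta$ is automatically a union of faces of $\Delta$: any $x \in V(\mathcal{F}_z) \cap \Delta$ lies in the relative interior of some $\Delta_I$, and the face-restriction criterion forces $\mathcal{F}_z|_{\Delta_I} \equiv 0$, hence $\Delta_I \subseteq V(\mathcal{F}_z)$. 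The key combinatorial observation is that $\Delta_I \subseteq V(\mathcal{F}_z)$ if and only if $I$ is dependent in the graphic matroid of $G$, i.e.\ $G|_I$ contains a cycle. For condition~(1), note that $\mathcal{A}^2_G \subseteq \mathcal{A}^+$ because each exponent $2 e_q + \sum_{j \in [n]\setminus(T \cup \{q\})} e_j$ carries coefficient $m_q > 0$; it therefore suffices to find, for each negative exponent ${\bf b}$ with $\supp({\bf b}) = [n] \setminus (T_1 \sqcup T_2)$, some ${\bf a} \in \mathcal{A}^2_G$ with ${\bf a} \preceq_I {\bf b}$. Since $I \setminus \supp({\bf b}) \subseteq T_1 \sqcup T_2$ is a forest, it extends to a spanning tree $T$; a matroid argument using the dependence of $I$ then produces a valid $q \in [n] \setminus (T \cup I)$, yielding ${\bf a} = 2 e_q + \sum_{j \in [n]\setminus(T \cup \{q\})} e_j$ with the required dominance in the $I$-coordinates. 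For condition~(2), after extracting the common factor $\prod_{i \in I} x_i^{a_i}$, the polynomial $\mathcal{F}_z|_{\Gamma_{I,{\bf a}}}$ is identified with (the pullback of) a face restriction of $\mathcal{F}_z$, and interior-copositivity on that face supplies the strict positivity.

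The main obstacle will be the matroid/graph-theoretic step in condition~(1): showing that one can always extend $I \setminus \supp({\bf b})$ to a spanning tree $T$ with $T \not\supseteq [n] \setminus I$, so that a suitable $q \in [n] \setminus (T \cup I)$ exists. A secondary subtle point is the identification of $\Gamma_{I,{\bf a}}$ in condition~(2) with a face of $\Newt(\mathcal{F}_z)$; this is not automatic from $\preceq_I$-minimality but can be established using the Minkowski decomposition $\Newt(\mathcal{F}) = \Newt(\mathcal{U}) + \Delta_{n-1}$ from Lemma~\ref{lem:NimaSebastian}.
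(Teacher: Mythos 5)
Your proposal does not prove the statement it was assigned. The statement here is P\'olya's Theorem with Zeros itself (Theorem~\ref{Thm:PolyaWithZeros}): a characterization, for an \emph{arbitrary} copositive polynomial $f=\sum_{\mathbf{a}\in\mathcal{A}}c_{\mathbf{a}}x^{\mathbf{a}}$ whose zero set in $\Delta$ is a union of faces, of when $(x_1+\dots+x_n)^N f$ has positive coefficients for some $N$. What you wrote is a proof of Theorem~\ref{Thm:PolyaSymanzik}, the Feynman-graph application, and it \emph{invokes} Theorem~\ref{Thm:PolyaWithZeros} as a black box (``To invoke Theorem~\ref{Thm:PolyaWithZeros}, observe first that\dots''). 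As an argument for the assigned statement this is circular: nothing in your text addresses either implication of the equivalence for general $f$. In particular, the hard direction (conditions (1) and (2) imply the existence of $N$) requires a quantitative analysis of how the coefficients of $(x_1+\dots+x_n)^N f$ behave as $N\to\infty$ near the faces $\Delta_I\subseteq V(f)$ --- the negative contributions coming from $\mathcal{A}^-$ must be dominated using the $\preceq_I$-domination in (1) and the strict positivity of the facial pieces $f|_{\Gamma_{I,\mathbf{a}}}$ in (2), in the spirit of the classical P\'olya/Powers--Reznick estimates; the easy direction requires showing that failure of (1) or (2) forces a negative or nonpositive coefficient for every $N$ (as illustrated in Example~\ref{Ex:PolyaFails}). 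None of this appears in your proposal.

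For calibration: the paper itself does not prove Theorem~\ref{Thm:PolyaWithZeros}; it imports it verbatim from Castle, Powers and Reznick \cite[Theorem 2]{CastlePowersReznick}. So a ``blind proof'' of this statement would have to reconstruct their argument (or an equivalent one), not apply it. The material you did write --- verifying conditions (1) and (2) for $\mathcal{F}_z$ using the bridgeless hypothesis, the support sets $\mathcal{A}_G^1,\mathcal{A}_G^2$, and the identification of the sets $\Gamma_{I,\mathbf{a}}$ with faces of $\Newt(\mathcal{F}_z)$ --- is essentially the content of Lemmas~\ref{Lemma:Condition1} and~\ref{Lemma:MinimalGammas} and the proof of Theorem~\ref{Thm:PolyaSymanzik} in Section~\ref{sec6}, so it is not wasted work; it is simply an answer to a different question than the one posed.
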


The hypothesis $\Delta_I \subseteq V(f)$ 
means that every ${\bf a} \in   \mathcal{A}$ satisfies ${\bf a}_i > 0$ for some $i \in I$. If $(x_1+\dots+x_n)^N f(x)$ has positive coefficients, then $f$ lies in the interior of the copositive cone~$\mathcal{C}_\mathcal{A}$. The converse does not hold for arbitrary polynomials (see Example~\ref{Ex:PolyaFails}). However, 
it is true for Symanzik polynomials, as we will show by applying Theorem~\ref{Thm:PolyaWithZeros} to $\mathcal{F}_z$.
We begin by assuming $m_1 > 0,\ldots, m_n > 0$.
This ensures that $\mathcal{A}^2_G \subseteq \mathcal{A}^+_G$ and
$\mathcal{A}^-_G \subseteq \mathcal{A}^1_G$, where $\mathcal{A}^+_G, \, \mathcal{A}^-_G$ is the positive and negative support of $\mathcal{F}_z$ respectively, with $\mathcal{A}^1_G,\, \mathcal{A}^2_G$ as in~\eqref{Eq:supportFz}.

\begin{lemma}
\label{Lemma:Condition1}
For every $I \subsetneq [n]$ and every  ${\bf b} \in  \mathcal{A}_G^1$, there exists  ${\bf a} \in \mathcal{A}_G^2$ such that ${\bf a} \preceq_I {\bf b}$.
    In particular, the second Symanzik polynomial $\mathcal{F}_z$ satisfies condition (1) in Theorem~\ref{Thm:PolyaWithZeros}.
\end{lemma}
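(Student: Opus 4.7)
The plan is to translate the inequality $\mathbf{a} \preceq_I \mathbf{b}$ into a concrete graph-theoretic requirement on spanning trees and non-tree edges, and then produce the required $\mathbf{a}$ by a short case split, using bridgelessness only in the harder case.

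First, I would unpack the exponent patterns from \eqref{Eq:supportFz}. An element $\mathbf{b} \in \mathcal{A}_G^1$ is specified by a spanning tree $T$ together with a distinguished edge $k \in T$: it has $\mathbf{b}_i = 1$ on $\{k\} \cup ([n] \setminus T)$ and $\mathbf{b}_i = 0$ on $T \setminus \{k\}$. An element $\mathbf{a} \in \mathcal{A}_G^2$ is specified by a spanning tree $T'$ and an edge $q' \in [n] \setminus T'$: it has $\mathbf{a}_{q'} = 2$, $\mathbf{a}_i = 1$ on $[n] \setminus (T' \cup \{q'\})$, and $\mathbf{a}_i = 0$ on $T'$. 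Requiring $\mathbf{a}_i \leq \mathbf{b}_i$ for all $i \in I$ then collapses to the two conditions
\begin{equation*}
q' \,\notin\, I \qquad \text{and} \qquad (T \setminus \{k\}) \cap I \,\subseteq\, T',
\end{equation*}
the first because $\mathbf{a}_{q'} = 2$ exceeds the maximum value $1$ of $\mathbf{b}$, and the second so that the zero entries of $\mathbf{b}$ inside $I$ remain zero in $\mathbf{a}$.

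Next, I would split on whether $I$ misses some non-tree edge of $T$. If $([n] \setminus T) \setminus I$ is nonempty, pick any $j$ in it and take $T' := T$ and $q' := j$; both conditions then hold immediately. Otherwise $[n] \setminus T \subseteq I$, and since $I \subsetneq [n]$ I can choose $q' \in [n] \setminus I$, which now forces $q' \in T$. Here bridgelessness enters: removing $q'$ from $T$ splits $T$ into two components $T_1,T_2$, and because $q'$ is not a bridge of $G$, there must exist an edge $f \in [n] \setminus T$ joining $T_1$ and $T_2$ (no edge of $T \setminus \{q'\}$ can do so). Setting $T' := (T \setminus \{q'\}) \cup \{f\}$ yields a spanning tree with $q' \notin T'$, and the inclusion $(T \setminus \{k\}) \cap I \subseteq T \setminus \{q'\} \subseteq T'$ holds because $q' \notin I$.

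The only genuine subtlety is the bridgeless case: one must verify both that a fundamental-cycle swap is available and that the resulting $T'$ still contains the required zero-support $(T \setminus \{k\}) \cap I$. Everything else is direct bookkeeping. The closing ``in particular'' assertion is then immediate: under the standing assumption $m_1, \dots, m_n > 0$ one has $\mathcal{A}_G^- \subseteq \mathcal{A}_G^1$, so condition~(1) of Theorem~\ref{Thm:PolyaWithZeros} applied to $\mathcal{F}_z$ specializes exactly to the statement just proved.
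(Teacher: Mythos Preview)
Your proof is correct and follows essentially the same route as the paper's: the identical two-case split on whether $([n]\setminus T)\setminus I$ is empty, with the same choice $T'=T$ in the easy case and the same bridgeless tree-swap $T' = (T\setminus\{q'\})\cup\{f\}$ in the hard case. The only cosmetic difference is that you justify the swap via the fundamental-cycle argument directly, while the paper phrases it through the matroid basis-exchange axiom; your explicit reformulation of $\mathbf{a}\preceq_I\mathbf{b}$ as the pair of conditions $q'\notin I$ and $(T\setminus\{k\})\cap I\subseteq T'$ is a helpful clarification that the paper leaves implicit.
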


\begin{proof}
We write ${\bf b} = e_k + \sum_{j\in [n] \setminus T} e_j$ where
 $T \in \mathcal{T}$ and $k \in T$. 
  If $q \in [n] \backslash (T \cup I)$, then ${\bf a} := 2 e_{q} + \sum_{j \in [n]\setminus(T\cup\{q\})} e_j 
$ lies in $\mathcal{A}_G^2$ and it satisfies ${\bf a} \preceq_I {\bf b}$. 
If $T \cup I = [n]$, then we pick an element  $q \in T \backslash I$. 
  Since $q$ is not a bridge in $G$, there exists $T_2 \in \mathcal{T}$ such that $q \notin T_2$. From the basis exchange axiom for matroids,
   it follows that there exists $j' \in [n] \setminus T$ such that $T_3 := \big(T \cup \{ j' \} \big) \backslash \{q\} \in \mathcal{T}$. Thus, for ${\bf a} := 2 e_{q} + \sum_{j \in [n] \setminus(T_3 \cup \{q\})} e_j \in \mathcal{A}_G^2$ we have ${\bf a} \preceq_I {\bf b}$.\end{proof}

In Lemma~\ref{Lemma:MinimalGammas}, we show that the minimal sets in~\eqref{Eq:falphaI} correspond to faces of $\conv(\mathcal{A}_G)$. 
To warm up for this,
 we compute some minimal exponent vectors of $\mathcal{A}_G$ for the parachute.

\begin{example}
\label{Ex:ParachuteGammas}
    Fix the parachute diagram from Example~\ref{Ex:Parachute} and $I = \{3,4\}$. 
    We seek the vectors in \eqref{Eq:ExponentsParachute} that are minimal for $\preceq_I$.
   An exponent vector in $\mathcal{A}_G$
   is minimal if and only if its last two coordinates are either $(1,0)$ or $(0,1)$. Thus, we have two minimal sets 
\begin{align}
\label{Eq:GammeEdgesParachute}
    \Gamma^{(1)}_{I,{\bf a}} =  \big\{ (2,0,1,0), (1,1,1,0), (0,2,1,0) \big\}, \;\;  \Gamma^{(2)}_{I,{\bf a}} = \big\{ (2,0,0,1), \; (1,1,0,1),\; (0,2,0,1) \big\}.
\end{align}
These correspond to two green edges of the 
Feynman polytope ${\rm conv}(\mathcal{A}_G)$ in Figure~\ref{FIG:Parachute}. 
\end{example}

We now show that the same property holds for any Feynman graph $G$ without bridges.

\begin{lemma}
\label{Lemma:MinimalGammas}
Consider any subset  $I \subsetneq [n]$ and any point ${\bf a} \in \mathcal{A}_G^+$ 
that is minimal for $I$. Then $\,
\Gamma_{I,{\bf a}} = \mathcal{A}_G \cap Q\,$ for some face $Q$ of
the Feynman polytope $\conv(\mathcal{A}_G)$.
\end{lemma}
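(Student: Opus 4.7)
The plan is to realize $\conv(\Gamma_{I,{\bf a}})$ as a face of the Feynman polytope by exhibiting a linear functional $\phi({\bf x}) = \sum_{i \in I} w_i x_i$ with strictly positive weights whose minimum face on $\conv(\mathcal{A}_G)$ has exactly $\Gamma_{I,{\bf a}}$ as its lattice-point set. Equivalently, I want to show that the coordinate projection $\pi_I({\bf a})$ is a vertex of the projected polytope $P_I := \conv(\pi_I(\mathcal{A}_G)) \subseteq \RR^I$. Once this is in hand, any weight vector in the interior of the normal cone of $P_I$ at $\pi_I({\bf a})$ pulls back through $\pi_I$ to the desired $\phi$, and the preimage of $\pi_I({\bf a})$ in $\mathcal{A}_G$ is precisely $\Gamma_{I,{\bf a}}$ by definition.

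First I would upgrade the minimality hypothesis: ${\bf a}$ is in fact $\prec_I$-minimal in all of $\mathcal{A}_G$, not only in $\mathcal{A}_G^+$. Indeed, if some ${\bf b} \in \mathcal{A}_G^- \subseteq \mathcal{A}_G^1$ satisfied ${\bf b} \prec_I {\bf a}$, Lemma~\ref{Lemma:Condition1} would produce ${\bf c} \in \mathcal{A}_G^2 \subseteq \mathcal{A}_G^+$ with ${\bf c} \preceq_I {\bf b} \prec_I {\bf a}$, contradicting minimality of ${\bf a}$ in $\mathcal{A}_G^+$.

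To establish the vertex property, suppose for contradiction that $\pi_I({\bf a}) = \tfrac{1}{2}(u+w)$ for distinct vertices $u, w$ of $P_I$, which then lie in $\pi_I(\mathcal{A}_G) \subseteq \{0,1,2\}^I$. Integrality forces, for each $i \in I$, either $u_i = w_i = {\bf a}_i$, or ${\bf a}_i = 1$ and $\{u_i, w_i\} = \{0,2\}$. Using the Minkowski decomposition $\conv(\mathcal{A}_G) = \Newt(\mathcal{U}) + \Delta_{n-1}$ from Lemma~\ref{lem:NimaSebastian}, write $u = \pi_I(\chi_{B_1} + e_{k_1})$ and $w = \pi_I(\chi_{B_2} + e_{k_2})$; each summand has at most one coordinate equal to $2$, so the set $T := \{i \in I : u_i \neq w_i\}$ has size at most $2$ and forces $S_2 := \{i \in I : {\bf a}_i = 2\} = \emptyset$. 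A short case analysis on $|T|$ then isolates a coordinate $k \in T$ (with ${\bf a}_k = 1$) and a cobasis $B \in \{B_1, B_2\}$ such that $k \notin B$ and $B \cap I \subseteq S_1 \setminus \{k\}$, where $S_1 := \{i \in I : {\bf a}_i = 1\}$. Picking any $j \in [n] \setminus I$ (possible because $I \subsetneq [n]$) and setting ${\bf c} := \chi_B + e_j \in \mathcal{A}_G$ gives $\pi_I({\bf c}) = \chi_{B \cap I} \preceq_I \pi_I({\bf a})$ with $\pi_I({\bf c})_k = 0 < 1 = {\bf a}_k$, so ${\bf c} \prec_I {\bf a}$, contradicting the strengthened minimality.

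The main obstacle I anticipate is the bookkeeping in this vertex-property argument: one must track where the "$2$"s in $u$ and $w$ lie relative to $I$ (inside or outside, same or different coordinates) and, in each branch, exhibit the cobasis $B$ and coordinate $k$ realizing the contradiction. The bridge-free hypothesis on $G$ enters only indirectly, via Lemma~\ref{Lemma:Condition1} used in the minimality upgrade.
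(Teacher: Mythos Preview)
Your overall strategy---reducing to the claim that $\pi_I({\bf a})$ is a vertex of $P_I=\conv(\pi_I(\mathcal{A}_G))$---is sound and is in fact equivalent to what the paper proves: the paper's explicit inner normal $v=\sum_{j\in I\setminus T}e_j+(\#(I\setminus T)+1)\sum_{i\in I\cap T}e_i$ is supported on $I$, so it certifies exactly this vertex property. Your minimality upgrade via Lemma~\ref{Lemma:Condition1} is also correct.

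The gap is in the contradiction step. A non-vertex of a polytope need not be the midpoint of two of its vertices (the centroid of a $2$-simplex is the standard counterexample), so from ``$\pi_I({\bf a})$ is not a vertex'' you cannot jump to $\pi_I({\bf a})=\tfrac12(u+w)$ with $u,w$ vertices; yet you need $u,w\in\pi_I(\mathcal{A}_G)$ in order to write $u=\pi_I(\chi_{B_1}+e_{k_1})$ and run your case analysis. The fix is to use a general convex combination $\pi_I({\bf a})=\sum_j\lambda_j v_j$ of distinct vertices $v_j=\pi_I(\chi_{B_j}+e_{k_j})$. Coordinates with ${\bf a}_i=0$ force $i\notin B_j$ for all $j$; if some ${\bf a}_{i_0}=2$ then all $k_j=i_0$ and, for the remaining $i\in I$, the values $(v_j)_i\in\{0,1\}$ average to ${\bf a}_i\in\{0,1\}$ and hence all coincide, contradicting distinctness---so $S_2=\emptyset$. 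Distinctness then yields some $i^*\in S_1$ and some $j_0$ with $(v_{j_0})_{i^*}=0$, whence $B_{j_0}\cap I\subseteq S_1\setminus\{i^*\}$, and your ${\bf c}=\chi_{B_{j_0}}+e_j$ with $j\notin I$ gives ${\bf c}\prec_I{\bf a}$ exactly as you wrote. With this correction your case analysis on $|T|$ becomes unnecessary.

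By contrast, the paper bypasses the projection entirely: it first reduces (again via Lemma~\ref{Lemma:Condition1}) to ${\bf a}\in\mathcal{A}_G^2$, writes ${\bf a}=2e_q+\chi_{[n]\setminus(T\cup\{q\})}$, shows $q\notin I$ by basis exchange, and then verifies by a direct two-case computation that the explicit $v$ above separates $\Gamma_{I,{\bf a}}$ from $\mathcal{A}_G\setminus\Gamma_{I,{\bf a}}$. Your route, once patched, is arguably cleaner because it never names a specific normal vector.
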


\begin{proof}
We  fix the subset $I$.
    If ${\bf a} \in \mathcal{A}_G^1$, then Lemma~\ref{Lemma:Condition1} tells us that there exists ${\bf a}' \in \mathcal{A}_G^2$, which is minimal for $I$ and satisfies ${\bf a}'_i = {\bf a}_i$ for $i \in I$. Thus, 
    we can assume ${\bf a} \in \mathcal{A}_G^2$. 
    There exists a spanning tree $T \in \mathcal{T}$ and $q \in [n]\backslash T$ such that ${\bf a} = 2e_q + \sum_{j \in [n]\setminus(T  \cup \{q\})} e_j$. 

    In the first part of the proof, we show by contradiction that $q \notin I$. Assume  $q \in I$. If there exists $p \in [n] \backslash (T\cup I)$, then $2e_p + \sum_{j \in [n] \setminus (T \cup \{p\})} e_j \prec_I {\bf a}$.
    This is a contradiction to ${\bf a}$ being minimal. Hence $T \cup I = [n]$.
    By the basis exchange axiom applied to $p \in T \setminus I$, there exists $j' \in [n] \backslash T$ such that $T_2 := (T \cup \{j'\}) \backslash \{p\} \in \mathcal{T}$. By construction we have $2e_p + \sum_{j \in [n] \setminus (T_2 \cup \{p\})} e_j \prec_I {\bf a}$. This is again a contradiction, and we conclude that $q \notin I$.

Our goal is to identify a face $Q$ of  the polytope $\conv(\mathcal{A}_G)$. To this end, we now set
$$ v \,\,:=\,\, \sum_{j \in I \setminus T}\! e_j + \sum_{i \in I \cap T} (\#(I\backslash T)+1) \,e_i.$$
Let  $Q$  be the face with inner normal vector $v$.
 By construction, $\langle v, {\bf b} \rangle = \#(I \backslash T)$ for ${\bf b} \in \Gamma_{I,{\bf a}}$. The intersection of $\mathcal{A}_G$ with $Q$ is the set $\Gamma_{I,{\bf a}}$ if and only if
  $\langle v, {\bf b} \rangle > \#(I \backslash T)$ for ${\bf b} \in \mathcal{A}_G \backslash \Gamma_{I,{\bf a}}$. To prove this, we show that for every ${\bf b} \in \mathcal{A}_G$ such that $\langle v, {\bf b} \rangle \leq \#(I\backslash T)$, we have  ${\bf b} \in \Gamma_{I,{\bf a}}$. By the choice of $v$, the inequality $\langle v, {\bf b} \rangle \leq \#(I\backslash T)$ ensures that ${\bf b}_i = 0$ for $i \in I \cap T$.

First assume that ${\bf b} \in \mathcal{A}_G^2$. We choose a spanning three
 $T' \in \mathcal{T}$ and $p \in [n] \backslash T'$ such that 
 ${\bf b} = 2e_p + \sum_{j \in [n] \backslash (T' \cup \{p\})} e_j$. 
Since ${\bf b}_i = 0$ for $i \in I \cap T$, we have $I \backslash T' \subseteq I \backslash T$. 
If equality holds then  ${\bf b} \in \Gamma_{I,{\bf a}}$. 
We consider the case  $I \backslash T' \subsetneq I \backslash T$. 
Since $T',T \in \mathcal{T}$, we cannot have $[n] \backslash T' \subseteq I$. 
Thus, there exists $p' \in [n] \backslash (T' \cup I)$, 
and we have  $2e_{p'} + \sum_{j \in [n] \setminus (T' \cup \{p'\})} e_j \prec_I {\bf a}$. 
But, recall that  ${\bf a}$ is minimal for~$I$.
We conclude that the case $I \backslash T' \subsetneq I \backslash T$ cannot happen.

For the second case,  consider any point ${\bf b} \in \mathcal{A}_G^1$ such that 
$\langle v, {\bf b} \rangle \leq \#(I \backslash T)$. By Lemma~\ref{Lemma:Condition1}, there exists ${\bf b}' \in \mathcal{A}_G^2$ with ${\bf b}' \preceq_I {\bf b}$. This implies  $\langle v, {\bf b}' \rangle \leq \langle v, {\bf b} \rangle \leq \#(I\backslash T)$. Since ${\bf b}' \in \mathcal{A}_G^2$, the argument in the previous
paragraph shows that ${\bf b}' \in \Gamma_{I,{\bf a}}$. Thus, we have $\#(I\backslash T) =  \langle v, {\bf b}' \rangle \leq \langle v, {\bf b} \rangle \leq \#(I\backslash T)$, and therefore ${\bf b} \in \Gamma_{I,{\bf a}}$.
This completes the proof.
\end{proof}

 Using Lemma~\ref{Lemma:Condition1} and \ref{Lemma:MinimalGammas}, we prove that Pólya's 
  certificate works whenever the second Symanzik polynomial of a Feynman graph $G$
   lies in the interior of the copositive cone $\mathcal{C}_G$.
 
\begin{proof}[Proof of Theorem \ref{Thm:PolyaSymanzik}]
The copositive cone $\mathcal{C}_G$ is the intersection of 
the cone $\mathcal{C}_{\mathcal{A}_G}$ from Section~\ref{sec5} with the kinematic subspace $\mathcal{K} \cong \mathbb{R}^{\binom{N}{2}+n}$ described in Section~\ref{sec2}. For fixed $k_{ij}$ and large enough 
masses $m_e$, the polynomial $\mathcal{F}_z$ in \eqref{Eq:F} has only positive coefficients. Thus, $\mathcal{K} \cap \interior(\mathcal{C}_{\mathcal{A}_G})\neq \emptyset$, which, in turn, implies that $\relint(\mathcal{C}_G) = \interior( \mathcal{C}_{\mathcal{A}_G}) \cap \mathcal{K}$.

Suppose $(x_1+\cdots+x_n)^N\cdot \mathcal{F}_{z}$ has only positive coefficients for some
$N \in \mathbb{N}$, and assume
$m_1 > 0, \dots, m_n > 0$. This latter hypothesis implies that $\Newt(\mathcal{F}_z) = \conv(\mathcal{A}_G)$. Consider any face $Q$ of this polytope. The initial form of the above
product in direction $Q$ equals $(\sum_{i \in J} x_i)^N \cdot \mathcal{F}_z|_Q(x)$
for some $J \subset [n]$.
Since all coefficients of this product are positive, we have
 $\, \mathcal{F}_z|_Q(x) > 0$ for all $x \in \mathbb{R}^n_{>0}$. 
This is equivalent to $z \in \relint(\mathcal{C}_G)$ by Lemma \ref{Lemma:InteriorCoposCone}.

    Conversely, assume that $\mathcal{F}_z|_Q(x) > 0$ for all faces $Q \subseteq \conv(\mathcal{A}_G)$ and
    all points $x \in \mathbb{R}^n_{>0}$.  This implies that $m_1>0,\dots,m_n > 0$, $\mathcal{F}_z(x) \geq 0$ for all $ x \in \Delta$, and $V(\mathcal{F}_z) \cap \Delta$ is a union of faces of $\Delta$. By Lemma~\ref{Lemma:Condition1}, $\mathcal{F}_z$ satisfies condition (1) in Theorem~\ref{Thm:PolyaWithZeros}. Using our assumption and Lemma~\ref{Lemma:MinimalGammas}, we conclude that condition (2) in Theorem~\ref{Thm:PolyaWithZeros} is also satisfied. Thus, P\'olya's Theorem with Zeros
(Theorem~\ref{Thm:PolyaWithZeros}) shows that the desired integer  $N \in \mathbb{N}$ exists.
 \end{proof}

We conclude this section by showing that condition (1) in Theorem~\ref{Thm:PolyaWithZeros} can fail for 
arbitrary polynomials $f(x)$. Polynomials from Feynman graphs are special.
The point is that
the minimal sets in~\eqref{Eq:falphaI} do not always correspond to faces of the Newton polytope.

\begin{example}
\label{Ex:PolyaFails}
Consider the ternary quintic
    \[f(x_1,x_2,x_3)\,\, =\,\,  x_1^3x_2x_3 -x_1^2x_2^2x_3 + x_1x_2^3x_3 + x_2^5
    \,\,=\,\, x_1x_2x_3\big( (x_1 - x_2)^2 + x_1x_2\big) + x_2^5.   \]
The Newton polytope $    \Newt(f)$ is a triangle. For each of the
 faces $Q$  of   $ \Newt(f)$, one checks that
$f|_Q(x) > 0$ for all $x \in \mathbb{R}^3_{>0}$.
Nevertheless, for all $N \in \mathbb{N}$, the product
 $(x_1+x_2+x_3)^Nf(x)$ has both positive and negative coefficients.
       To see this, we apply the only-direction in  Theorem~\ref{Thm:PolyaWithZeros} 
       to the edge $\Delta_{\{1,2\}}$, which is contained in $V(f)$. The positive and negative support of $f$ are $\mathcal{A}^+ = \big\{ (3,1,1), \; (1,3,1), \; (0,5,0)\big\}$ and $\mathcal{A}^- = \big\{ (2,2,1)\big\}$.
    For ${\bf b} = (2,2,1)$ there is no ${\bf a} \in \mathcal{A}^+$ with ${\bf a} \preceq_{\{1,2\}} {\bf b}$. Condition (1) in Theorem~\ref{Thm:PolyaWithZeros} is violated and therefore  $(x_1+x_2+x_3)^Nf(x)$ does not have only positive coefficients for any $N \in \mathbb{N}$.
\end{example}

\begin{example}
    The minimal sets in \eqref{Eq:falphaI} do not always correspond to faces of the Newton polytope. 
    To illustrate this, we consider ternary forms of degree $10$ which have support sets
    \[\mathcal{A}^+ = \big\{ (1,2,7),(2,5,3),(5,4,1),(4,4,2)\big\}
    \quad {\rm and} \quad \mathcal{A}^- = \big\{ (2,3,5)\big\}.\]
    Every element in the positive support $\mathcal{A}^+$ is minimal for $I =\{1,3\}$. For ${\bf a} = (4,4,2)$, 
    the minimal set is $\Gamma_{I,{\bf a}} = \{{\bf a}\}$, and this is contained in the interior of 
    the triangle  $\conv(\mathcal{A}^+)$.
\end{example}

\section{Computing Certificates}
\label{sec7}

Given a Feynman graph $G$ and a choice of kinematic parameters $z$,
we wish to decide whether $\mathcal{F}_z(x)$ is copositive.
The answer should be made manifest with a certificate.
If $z$ is in the interior of $\mathcal{C}_G$ then this can be
certified with Pólya's method, as shown in Theorem \ref{Thm:PolyaSymanzik}.
If $z$ lies outside the cone $\mathcal{C}_G$
then the certificate consists of a positive point $u \in \RR^n_{\geq 0}$
such that $\mathcal{F}_z(u) < 0$.
The boundary of $\mathcal{C}_G$ is the decision boundary.
The probability for random data $z$ to lie in this boundary is zero.
We thus ignore this case for our discussion in this section.

We present an algorithm for 
making that decision and for computing the certificates.
We also offer a proof-of-concept implementation in 
the programming language {\tt Julia}.
Our software is called \texttt{CopositiveFeynman.jl}.
 The implementation rests on the 
computer algebra system \texttt{OSCAR.jl}~\cite{OscarBook},
and it calls the packages \texttt{Landau.jl} for Landau discriminants
 \cite{MizeraTelen}
and \texttt{HomotopyContinuation.jl} for numerical algebraic geometry \cite{BreidingTimme}.
The code is posted~at
 $$ \hbox{\url{https://mathrepo.mis.mpg.de/CopositiveFeynman}.} $$

The input for our software is the graph $G$ and a vector $z$ of kinematic parameters.
The decision whether $z$ lies in $\mathcal{C}_G$ or not amounts to solving the following 
optimization problem:
\begin{equation}
\label{eq:optimization}
{\rm Minimize} \,\,\mathcal{F}_z(u) \,\,\hbox{subject to} \,\,\, u \in \Delta. 
\end{equation}
Here $\Delta$ denotes the $(n-1)$-simplex $\{ u \in \RR^n_{\geq 0} : u_1 + u_2 + \cdots + u_n = 1\}$.
We compute the minimum in (\ref{eq:optimization}) algebraically.
Namely we replace $\Delta$ with
the Feynman polytope $P $, and we solve
the critical equations on each face $Q$ of $P$.
The details for this will be described below.

If the objective function value in (\ref{eq:optimization})
is positive then we invoke P\'olya's method
to certify that $\mathcal{F}_z$ lies in the interior of~$\mathcal{C}_G$. 
By Theorem~\ref{Thm:PolyaSymanzik}, there exists a positive integer $N$ such that
$(x_1+\dots+x_n)^N \mathcal{F}_z(x)$ has only positive coefficients. 
Our function \texttt{find\_Polya\_exponent()} finds the smallest integer $N$  with this property.
An upper bound for that $N$ was given by
Castle, Powers and Reznick in \cite[Theorem 3]{CastlePowersReznick}.
 Their bound depends on the degree, the size of the coefficients, and the minimal values the polynomial attains
 on $\Delta$. In a nutshell, the closer to zero the minimal value in
 (\ref{eq:optimization}) happens to be, the larger will be the bound on $N$.

We illustrate the growth of $N$ near the boundary of $\mathcal{C}_G$ for the banana diagrams
in Section~\ref{sec4}. The Symanzik polynomial $\mathcal{F}_z$ has $n+1$ kinematic parameters $z = (m_1, \dots, m_n, s)$. Setting $m_1 = 1, \dots, m_n = 1$, the parameter $z$ lies in the copositive cone if and only if $s/n^2 \leq 1$, by Theorem~\ref{thm:banana}. 
Figure \ref{FIG:PolyaExponent} features $n=2,3,4$. It shows
 several choices of $s/n^2$ along with the smallest $N$ such that $(x_1+\dots+x_n)^N \mathcal{F}_z(x)$ has only positive coefficients.

\begin{figure}[h]
\centering
\includegraphics[scale=0.4]{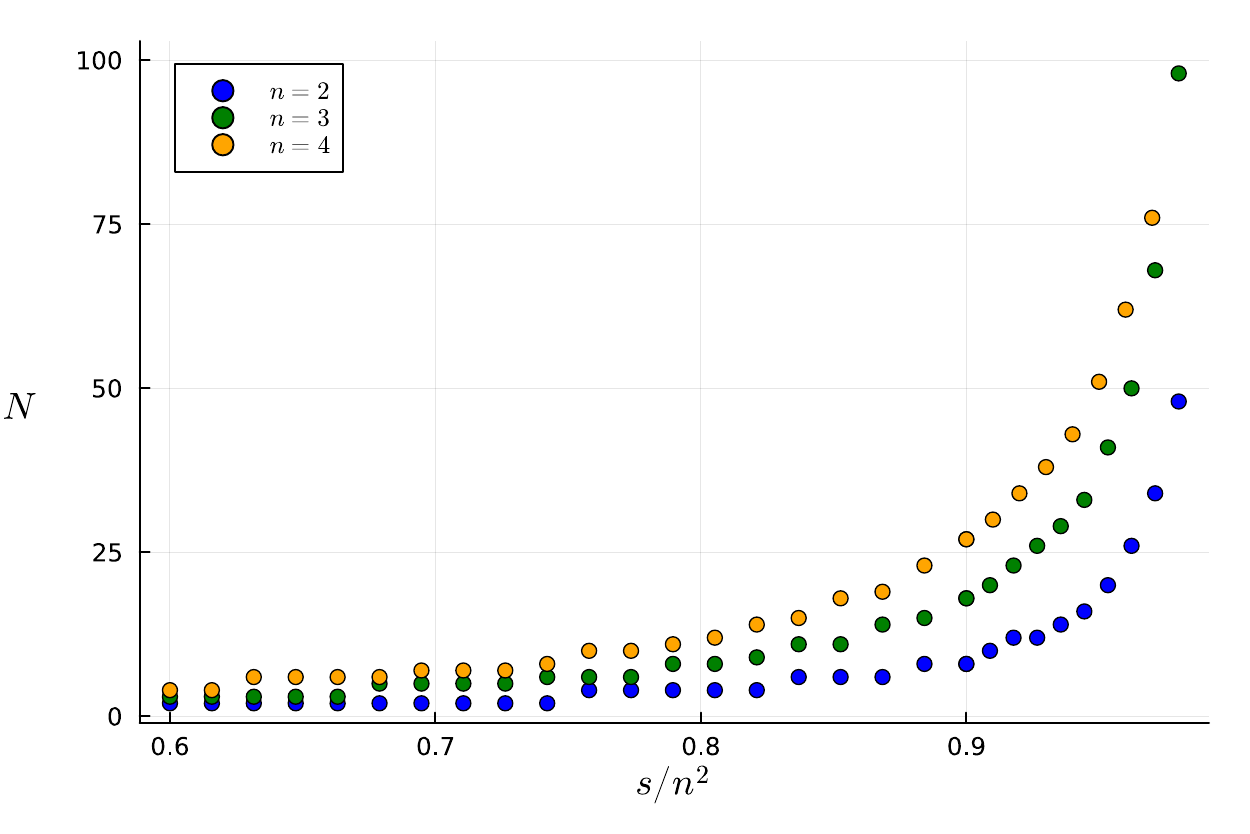}
\caption{{\small Illustration of P\'olya's Theorem for banana diagrams with $n$ internal edges and $z=(1,\dots,1,s)$.
The smallest certifying exponent $N$ is a function of $s/n^2$. It blows up when $s/n^2 \rightarrow 1$.}}
\label{FIG:PolyaExponent}
\end{figure}

Our approach to the optimization problem (\ref{eq:optimization}) rests on the
affine critical equations
\begin{equation}
\label{Eq:AffineLandau}
        \frac{\partial\mathcal{F}_{z}}{\partial x_1}(x_1,\dots,x_{n-1},1) \,=\, \cdots \,=\, 
        \frac{\partial{\mathcal{F}_z}}{\partial x_{n-1}}(x_1,\dots,x_{n-1},1)  \,\,= \,\,0.
\end{equation}
These are called \emph{Landau equations} in physics.
We solve the equations (\ref{Eq:AffineLandau}) using the software
 \texttt{HomotopyContinuation.jl} \cite{BreidingTimme}.
 The number of solutions is finite, possibly after perturbing the coefficients.
 We extract all solutions $u$ with positive real coordinates, and we
 evaluate $\mathcal{F}_{z}$ at these solutions.
 If $\mathcal{F}_z(u) < 0$ for some $u$ then we are done:
  $\mathcal{F}_z$ is manifestly not copositive.
 
 Otherwise, we examine the facets $Q$ of 
 the Feynman polytope $P = {\rm Newt}(\mathcal{F}_z)$.
 For each facet $Q$, we form the restriction
$\mathcal{F}_{z}|_Q$, and we consider the affine Landau equations
for $\mathcal{F}_{z}|_Q$.
Again, if one critical point $u \in \RR^n_{> 0}$ satisfies
$\mathcal{F}_{z}|_Q(u) < 0$ then
we are done: a certificate for $z \not\in \mathcal{C}_G$ has been found.
Otherwise, we proceed to facets of $Q$. In this manner we examine
all faces of $P$. 
This approach is justified by Lemma \ref{Lemma:InteriorCoposCone},
and it verifies that $\mathcal{F}_z$ is {\em completely non-vanishing}, in the
language of \cite{Borinsky}.
At this point we know that $\mathcal{F}_z$ is in the interior of the copositive cone.
But we still need a certificate.
For that,  our software invokes the
function \texttt{find\_Polya\_exponent()}. This now
computes a P\'olya certificate for the copositivity of $\mathcal{F}_z$.

\begin{example}
\label{Ex:ComputingParachute}
We revisit the parachute diagram (Example~\ref{Ex:Parachute}) and show how to use the package \texttt{CopositiveFeynman.jl}. To compute the Symanzik polynomials, we rely on the
package \texttt{Landau.jl} 
by Mizera and Telen \cite{MizeraTelen}. Here, a graph is represented by a list of \texttt{edges}, where each edge is 
given by its pair of vertices, and a list of \texttt{nodes}, which specify the vertices to which the external edges are attached. For the parachute diagram~\eqref{Eq:Parachute}, we write:

\medskip
\begin{verbatim}
using CopositiveFeynman
edges = [[1,2],[1,3],[2,3],[2,3]];
nodes = [1,1,3,2];
Fz, U, x, k, mm = getF(edges, nodes);
Fz,s,t,M,m = substitute4legs(Fz,k,mm);
\end{verbatim}
The function \texttt{substitute4legs()} takes $m_1, \, m_2, \, m_3, \, m_4, \, M_1 := k_{11}, \,  M_2 := k_{22}, \, M_3 := k_{33}, \,  M_4 := k_{44}, \, s := M_1+ M_2 + k_{12}, \, t := M_2+ M_3 + k_{23} $ as the basis of the parameter space (cf. Section~\ref{sec4}). 
We choose numerical values for these parameters so that $\mathcal{F}_z$ is copositive.
\begin{verbatim}
F = subs(Fz,m[1]=>1,m[2]=>1,m[3]=>1,m[4]=>1,s=>3.9,t=>1,M[3]=>1,M[4]=>1);
find_Polya_exponent(F);
\end{verbatim}
The code returns $N=37$. This means that
 $(x_1+\dots+x_n)^N\mathcal{F}_{z}(x)$ has positive coefficients, certifying that $\mathcal{F}_z$ is copositive. We 
 next modify the parameters so that $\mathcal{F}_z$ is not copositive:
\begin{verbatim}
F = subs(Fz,m[1]=>1,m[2]=>1,m[3]=>1,m[4]=>1,s=>4.1,t=>1,M[3]=>1,M[4]=>1);
preclude_copositivity(F,edges);
\end{verbatim}
This returns the edge $Q_1 = \conv( (2,0,1,0),(0,2,1,0))$ of the Feynman polytope in Figure~\ref{FIG:Parachute},
and also the evaluation
   $\mathcal{F}_z|_{Q_1}(1.05,1,1,1) = -0.1025$.  Hence $\mathcal{F}_z$ is manifestly not copositive. 

\medskip

The polytope in Figure~\ref{FIG:Parachute} has seven facets and $15$ edges.
Not all faces $Q$ are needed for solving the affine critical equations~\eqref{Eq:AffineLandau}. 
For instance, for the green rectangle, we observe
\begin{equation}
\label{Eq:ParachuteFactors}
\begin{aligned}
 \mathcal{F}_z|_Q &\,=\, (x_3 + x_4) \bigl(\,m_1 x_1^2 + (m_1 + m_2 -s)x_1x_2 + m_2 x_2^2 \,\bigr).
\end{aligned}
\end{equation}
If this fails to be copositive then so does 
$\mathcal{F}_z|_{Q_1}(x)$, where $Q_1 = \conv( (2,0,1,0),(0,2,1,0))$. 
The same holds for the
 edge $Q_2 = \conv( (2,0,0,1),(0,2,0,1))$.
 Thus, instead of minimizing each of $\mathcal{F}_z|_{Q}, \, \mathcal{F}_z|_{Q_1}, \, \mathcal{F}_z|_{Q_2}$ individually, it 
 suffices to do so for $\mathcal{F}_z|_{Q_1}$. 
Note that in \eqref{Eq:ParachuteFactors}, $x_3+x_4$ is the first Symanzik polynomial of the subgraph $\gamma$ with edges $3$ and $4$, while $m_1 x_1^2 + (m_1 \!+\! m_2 \!-\!s)x_1x_2 + m_2 x_2^2$ is the second Symanzik polynomial of the contraction $G/\gamma$.
\end{example}

The observation above is true for any Feynman graph.
Facets $Q$ of $\Newt(\mathcal{F}_z)$ correspond to one-vertex irreducible subgraphs $\gamma$ of $G$. These are subgraphs that cannot be disconnected by removing a single vertex. 
From \cite[Theorem 2.7]{Brown}, we learn that
\begin{align}
\label{Eq:ExpOnFacets}
 \mathcal{F}_z|_{Q}(x) \,\,= \,\,\mathcal{U}_\gamma(x) \cdot \mathcal{F}_{G/\gamma}(x) .
\end{align}

In this formula, $\mathcal{U}_\gamma$ is the first Symanzik polynomial of the subgraph $\gamma$,
and $\mathcal{F}_{G/\gamma}$ is the second Symanzik polynomial of the contraction $G/\gamma$.
Thus, when we examine the facet $Q$ corresponding to $\gamma$, it suffices to 
solve the affine Landau equations for  $G/\gamma$.
Indeed, $\mathcal{F}_z|_{Q}(x)$ fails to be strictly copositive
if and only if $\mathcal{F}_{G/\gamma}$ does. We now go down in dimension.
  If  $Q_1$ is a facet of $Q$, then
  $\mathcal{F}_z|_{Q_1}(x) = (\mathcal{U}_{\gamma})|_{Q_U}(x) \cdot (\mathcal{F}_{G/\gamma})|_{Q_F}(x)$ where $Q_U$ and $Q_F$ are faces of $\Newt(\mathcal{U}_\gamma)$ and $\Newt(\mathcal{F}_{G/\gamma})$ respectively, with the same inner normal vector as $Q_1$. Thus, to find points where $\mathcal{F}_z|_{Q_1}$ attains negative values,
  it suffices to explore the faces of $\Newt(\mathcal{F}_{G/\gamma})$.

The facets of $\Newt(\mathcal{F}_{G/\gamma})$ are given by  one-vertex irreducible subgraphs $\gamma_2$ of $G/\gamma$. For these, we solve the affine Landau equations for  $(G/\gamma)/\gamma_2$. We continue this reduction until the contracted graph 
has one vertex and no edges. We summarize this procedure in Algorithm~\ref{algo}.
\begin{algorithm}[H]
\caption{\texttt{preclude\_copositivity}}
\label{algo}
\begin{algorithmic}[1]
\Require  $(G,\mathcal{F}_z)$, a Feynman graph and its second Symanzik polynomial
\Ensure \True\, if $\mathcal{F}_z$ is not copositive, \False\, otherwise
\State $X \leftarrow$ positive solutions of \eqref{Eq:AffineLandau} for $\mathcal{F}_z$
\If{$\mathcal{F}_z(x) < 0$ for some $x \in X$}
    \Return \True
\EndIf

\State \texttt{subgraphs} $\leftarrow$  $(\gamma,\mathcal{F}_{G/\gamma})$ for all one-vertex irreducible subgraphs $\gamma$ of $G$ 

\For{ $(\gamma,\mathcal{F}_{G/\gamma}) \in $ \texttt{subgraphs} }
    \If{\texttt{preclude\_copositivity}$(\gamma,\mathcal{F}_{G/\gamma})$}
    \Return \True
    \EndIf
\EndFor
\Return \False
\end{algorithmic}
\end{algorithm}
Our software  \texttt{CopositiveFeynman.jl} offers a test implementation of
Algorithm~\ref{algo}. We conclude by running it on the Feynman graphs in \eqref{Eq:BoxBeetle},
for parameters $z$ we saw in Section~\ref{sec4}.

\begin{example}
\label{Ex:BoxesBeetlesManifest}
Fix the double box $G_1$, nonplanar double box $G_2$ and beetle graph $G_3$,
with four kinematic parameters $z =(m,M,s,t) $. We run our code on all
three graphs for $z = (1,\tfrac{1}{4},-6,-6) $  and for
$ \tilde z =   (5,4,-8,22)$. These are the parameters from
Example \ref{ex:ztildez}.

    For $z$, the smallest Pólya exponent for $G_1$ and $G_3$ is $N = 0$.
    Indeed, for these two graphs, $\mathcal{F}_z$ has only positive coefficients. For $G_2$, the function \texttt{preclude\_copositivity()} gives that
\begin{align*}
    \mathcal{F}_z|_{Q}(x) \,\,= \,\,x_2x_6^2 + x_2^2x_6 + x_2^2x_5  + x_6^2x_5  + x_2x_5^2  + x_5^2x_6 - 10x_2x_5x_6 \,\,\approx \,\,-13.7588
\end{align*}
for $x = (1, 2.5351, 1, 1, 2.5352, 1, 1)$, and it exhibits the
relevant two-dimensional face $Q$ of the Feynman polytope.
For $\tilde z$, the second Symanzik polynomial of $G_3$ is not copositive since
\begin{align*}
    \mathcal{F}_{\tilde{z}}|_{Q}(x) \,\,=\,\, 5x_2^2x_4  - 12x_2x_4x_7  + 5x_4x_7^2 \,\,\approx \,\,-2.2
\end{align*}
for $x = (1, 1.2, 1, 1, 1, 1, 1 )$ and the face $Q=\conv \{(0,2,0,1,0,0,1), (0,1,0,1,0,0,2) \}$.
For $G_1$ and $G_2$,  the product $(x_1 + \dots +x_7)\mathcal{F}_{\tilde{z}}(x)$ has only positive coefficients, certifying copositivity.
\end{example}

\bigskip \bigskip 

\noindent {\bf Acknowledgement}:
Supported by the ERC \begin{small} (UNIVERSE PLUS, 101118787). \end{small}
$\!\!$ \begin{scriptsize}Views~and~opinions expressed
are however those of the authors only and do not necessarily reflect those of the European Union or the European
Research Council Executive Agency. Neither the European Union nor the granting authority
can be held responsible for them.
\end{scriptsize}

\bigskip

\noindent {\bf Conflict of Interest Statement}:
On behalf of all authors, the corresponding author states that there is no conflict of interest.

\bigskip

\bigskip \medskip

\noindent
\footnotesize {\bf Authors' addresses:}
\smallskip

\noindent Bernd Sturmfels, MPI-MiS Leipzig \hfill {\tt bernd@mis.mpg.de}

\noindent M\'at\'e L. Telek, MPI-MiS Leipzig \hfill {\tt  mate.telek@mis.mpg.de}

\end{document}